\newcommand{\RR}{\mathbb{R}}
\newcommand{\CC}{\mathbb{C}}
\newcommand{\FF}{\mathbb{F}}
\newcommand{\sym}{\mathbb{S}}
\newcommand{\cC}{{\mathfrak{C}}}
\newcommand{\cG}{{\cal G}}
\newcommand{\wt}{\widetilde}
\newcommand{\iunit}{{\mathfrak i}}
\newcommand{\la}{\lambda}
\newcommand{\colb}{\color{black}}
\newcommand{\rank}{{\rm rank\,}}
\newcommand{\mycomment}[1]{}
\font\tenex=cmex10 
\newdimen\p@renwd
\def\bmat#1{\begingroup \m@th
  \setbox\z@\vbox{\def\cr{\crcr\noalign{\kern2\p@\global\let\cr\endline}}%
    \ialign{$##$\hfil\kern2\p@\kern\p@renwd&\thinspace\hfil$##$\hfil
      &&\quad\hfil$##$\hfil\crcr
      \omit\strut\hfil\crcr\noalign{\kern-\baselineskip}%
      #1\crcr\omit\strut\cr}}%
  \setbox\tw@\vbox{\unvcopy\z@\global\setbox\@ne\lastbox}%
  \setbox\tw@\hbox{\unhbox\@ne\unskip\global\setbox\@ne\lastbox}%
  \setbox\tw@\hbox{$\kern\wd\@ne\kern-\p@renwd\left[\kern-\wd\@ne
    \global\setbox\@ne\vbox{\box\@ne\kern2\p@}%
    \vcenter{\kern-\ht\@ne\unvbox\z@\kern-\baselineskip}\,\right]$}%
  \null\;\vbox{\kern\ht\@ne\box\tw@}\endgroup}
\newcounter{algo}[section]
\renewcommand{\leq}{\leqslant}
\renewcommand{\geq}{\geqslant}
\renewcommand{\H}{\mathbb H}
\renewcommand{\theta}{\vartheta}
\definecolor{green}{rgb}{0.0,0.7,0.0}
\newcommand{\cm}[1]{{\color{black} #1}}
\newcommand{\vm}[1]{{\color{black} #1}}
\DeclareMathOperator{\diag}{diag}
\DeclareMathOperator{\rev}{rev}
\newcommand {\mat}  [1] {\left[\begin{array}{#1}}
\newcommand {\rix}      {\end{array}\right]}
\newcommand{\C}{\mathbb{C}}
\begin{document}

\title{Low rank perturbation of regular matrix pencils with symmetry structures}

\author{Fernando De Ter\'{a}n\and Christian Mehl\and Volker Mehrmann}
\institute{Fernando De Ter\'{a}n\at Universidad Carlos III de Madrid, Departamento de Matem\'{a}ticas, Legan\'{e}s, Madrid, Spain,
\email{fteran@math.uc3m.es}
\and
Christian Mehl\and Volker Mehrmann\at Technische Universit\"at Berlin, Institut f\"ur Mathematik, Sekretariat MA 4-5, 10623 Berlin,
\email{$\{$mehl,mehrmann$\}$@math.tu-berlin.de}
}

\titlerunning{Low rank perturbation of regular pencils}
\date{\today}

\maketitle

\begin{abstract}
The generic change of the Weierstra\ss\ Canonical Form  of regular complex structured matrix pencils under
\cm{generic structure-preserving} additive low-rank perturbations is studied. Several different \cm{symmetry structures} are considered
and it is shown that for most of the structures, the generic change in the eigenvalues is analogous
\cm{to the case of generic perturbations that ignore the structure}.
However, for some odd/even and palindromic structures, there is a different behavior for the eigenvalues $0$ and $\infty$,
respectively $+1$ and $-1$. The differences arise in those cases where the parity of the partial multiplicities in the perturbed
pencil provided by the generic behavior in the \cm{general structure-ignoring} case is not in accordance with the restrictions
imposed by the structure. \cm{The} new results extend results for the rank-$1$ \cm{and rank-$2$ cases that were obtained in
\cite{batzke14,batzke16} for the case of special structure-preserving perturbations. As the main tool, we use decompositions of
matrix pencils with symmetry structure into sums of rank-one pencils, as those allow a parametrization of the set of matrix pencils
with a given symmetry structure and a given rank.}
\end{abstract}

\bigskip
{\bf Key Words}: Even matrix pencil, palindromic matrix pencil, Hermitian matrix pencil, \cm{symmetric matrix pencil,
skew-symmetric matrix pencil,}
perturbation analysis, generic perturbation, low-rank perturbation, additive decomposition of structured pencils,
Weierstra{\ss} canonical form.
\bigskip

{\bf Mathematics Subject Classification}:
15A22, 15A18, 15A21, 15B57.
\section{Introduction}

The generic change in the Jordan structure of matrices under low-rank perturbations has been
established in \cite{HorM94} and was rediscovered later independently in \cite{MorD03,Sav03,Sav04}:
if a matrix $A\in\mathbb C^{n\times n}$ has an eigenvalue $\lambda_0$ with partial multiplicities
$n_1\geq\cdots\geq n_g$ (i.e., these are the sizes of the Jordan blocks associated with $\lambda_0$ in
the Jordan canonical form of $A$), then a generic perturbation of rank $r<g$ has the effect that the
perturbed matrix still has the eigenvalue $\lambda_0$ with partial multiplicities $n_{r+1}\geq\cdots\geq n_g$,
while $\lambda_0$ is no longer an eigenvalue of the perturbed matrix if a generic perturbation of
rank $r\geq g$ is applied.

Starting with \cite{MehMRR11} a series of papers has studied the generic changes in
the Jordan structure of matrices with symmetry structures under structure-preserving low-rank perturbations
and it has been observed that sometimes the behavior differs from the one under arbitrary low-rank
perturbations due to restrictions in the possible Jordan structures of the matrices with symmetry
structures, see \cite{BatMRR16,FouGJR13,Jan12,MehMRR11,MehMRR12,MehMRR13,MehMRR14,MehMRR16,MehR17}.

There are many applications where low-rank perturbations of matrix pencils \cm{with or without symmetry structures}
arise. \cm{For example,} matrix pencils are the coefficient
representations of linear differential-algebraic equations, see e.g. \cite{BreCP96,KunM06} and the references therein.
Structured low-rank perturbations are \cm{then} common when power networks or electrical circuits are considered, and the stability is
studied when interconnections are interrupted  \cite{HamIRP10,AlbAN04,DuLM13,Pet08}. These are typically  perturbations of rank one or two.
Another class of problems where the \cm{perturbations are of} low-rank compared to the system size, but not low-rank in absolute terms, are
switched systems which change their states, see e.g. \cite{GraMQSW16,HamM08,Lib03,LibT12,MehW09}.
We will study low-rank perturbations of structured matrix pencils from an abstract \cm{matrix-theoretical} point of view and do not
consider the many concrete applications where this topic has major implications.

A result on the generic
change of the Weierstra{\ss} structure (namely, the partial multiplicities) under low-rank perturbations
of regular pencils without any
additional symmetry structures has been established as early as in \cite{ddm}, where \emph{genericity}
was understood in the following sense: a subset of a finite-dimensional linear space of perturbations is called
generic if it is an open dense subset with respect to the natural topology on the linear space.
In contrast to this notion, a stronger concept of genericity had been used in the references \cm{starting from \cite{MehMRR11}}:
in that sense, a subset $\cG$ of $\mathbb C^m$ is generic if its complement $\mathbb C^m\setminus\cG$
is contained in a proper algebraic set, i.e., a set of common zeros of finitely many polynomials in $m$ variables
that does not coincide with the full set $\mathbb C^m$. The latter concept is not only stronger {\colb than} the previous
one (clearly any generic set in the latter sense is an open dense subset of $\mathbb C^m$ while the converse is
not true in general), but it also allowed an easy transition from the complex to the real case as it was
shown in \cite{MehMRR13}. {\colb This} concept requires the parametrization of the set of
considered perturbations as a subset of $\mathbb C^m$. In \cite{dd16} such a parametrization of the set
of pencils of rank at most $r$ was introduced and the result from \cite{ddm} could be generalized to the
stronger concept of genericity in the sense of its complement being contained in a proper algebraic set.
The main result obtained in \cite{dd16} states that the generic behavior in the case of matrix pencils coincides
with the one for matrices. More precisely, if $A+\la B$ is a regular pencil and $\la_0\in\CC\cup\{\infty\}$ is an
eigenvalue of $A+\la B$ with partial multiplicities $n_1\geq\cdots\geq n_g$, then a generic additive perturbation
of $A+\la B$ with rank $r$ ``destroys" the $r$ largest multiplicities, so that the perturbed pencil
has \cm{the partial multiplicities $n_{r+1}\geq\cdots\geq n_g$} at $\la_0$.

Surprisingly, the case of matrix pencils with some additional symmetry structure has not yet been as well studied as the matrix case.
The first attempt to investigate the generic change in the Weierstra{\ss} structure of such matrix pencils
under structure-preserving low-rank perturbations was undertaken in \cite{batzke14,batzke16,batzke-thesis},
where the \cm{cases} of rank-$1$ perturbations and special perturbations of rank two \cm{were} considered - the restriction
to these cases was due to the fact that straightforward parameterizations were available in that case.
While it was shown in \cite{BatMRR16} how the knowledge of the behavior in the rank-one case can be extended
to arbitrary rank in the matrix case, a similar transition is not possible in the pencil case, since a structured
pencil of small rank can in general not be written as a sum of those rank-$1$ or rank-$2$ pencils that were considered in
\cite{batzke14,batzke16,batzke-thesis}. Therefore, the case of structure-preserving perturbations of rank larger than
two remained an open problem.

It is our aim to fill this gap by extending the ideas from \cite{dd16} to develop parameterizations of
low-rank pencils with symmetry structures and obtain results on the generic change in the Weierstra{\ss}
structure of structured matrix pencils under low-rank structure-preserving perturbations. Moreover, we will
also consider one aspect that has not been considered in the pencil case so far: the generic multiplicity of
newly generated eigenvalues.

Low-rank perturbation of singular matrix pencils has been considered in \cite{dd07}, restricted to the case
where the perturbed pencil remains singular. A different generic behavior on the change of the partial multiplicities
of eigenvalues is shown in this case. In particular, for generic perturbations, all partial multiplicities of any
eigenvalue of the unperturbed pencil stay after perturbation. In this paper, however, we restrict ourselves to
regular matrix \cm{pencils} which remain regular after perturbation (which is a generic condition). Nonetheless,
singular pencils naturally appear in the context of the present work, since low-rank pencils are necessarily singular.

{\colb The  paper is organized as follows. In Section \ref{notation.sec} we introduce some notation and recall the
Weierstra{\ss} canonical form. The symmetry structures considered in the paper are introduced in Section \ref{rank1.sec},
where we also present the rank-$1$ decomposition of low-rank structured pencils for any of these structures. We consider
the Hermitian and $\top$-even cases in full detail, and from the results for these two structures we derive the results
for the remaining symmetry structures. Section \ref{main.sec} contains the main results of the paper, namely the description
of the generic change of the partial multiplicities of regular pencils \cm{with symmetry structures} under low-rank \cm{structure}-preserving
perturbations. If we restrict ourselves to pencils with real entries, the approach followed in the manuscript is no longer valid.
\cm{In the short Section \ref{real.sec} we briefly discuss the case of real matrix pencils with symmetry structures and
explain why the results of the previous sections cannot be applied in that case.}
In Section \ref{conclusion.sec} we summarize the contributions of the
paper and we present some lines of further research. Appendix \ref{appendix} contains the proof of a couple of technical results
used in Section \ref{main.sec}.}

\section{Notation and basic results}\label{notation.sec}

By $e_i$ we denote the $i$th canonical vector of appropriate size, i.e., the $i$th column of the identity
matrix with the appropriate order. By $\iunit$ we denote the imaginary unit. The notation $0_{m\times n}$
stands for the $m\times n$ zero matrix. When either $m=1$ or $n=1$, then we just write $0_n$ or $0_m$,
respectively. Note that we use the same notation for zero rows and zero columns, but which is the right one
is clear by the context.

As usual, $\CC^{m\times n}$ denotes the set of $m\times n$ matrices with complex entries, and $\CC^n$ denotes
the set of vectors with $n$ complex coordinates in column form (i.e., $\CC^n=\CC^{n\times1}$). Given a
matrix $A\in\CC^{m\times n}$, we denote by $A(i,j)$ the $(i,j)$ entry of $A$. By $\C[\la]^n$ we denote the
set of vector polynomials with $n$ coordinates, i.e., the set of vectors with $n$ coordinates which are
polynomials in the variable $\la$.

We use $L(\la)$ for general pencils, as well as for the given (unperturbed) pencil, whereas $E(\la)$ will
be used for the perturbation pencil.
The notation $\star$ is used for either the transpose ($\top$) or the conjugate transpose ($*$) of a matrix.
Given a matrix pencil $L(\la)=A+\lambda B$ (or just $L$, for short), by $L(\la)^\star$ (or $L^\star$, for short) we
denote the pencil $A^\star+\la B^\star$. It is important to note that, when $\star=*$, then the operator
$*$ does not affect the variable $\la$, but just the coefficients of the pencil. The pencil is said to be {\em regular}
if it is square and $\det L(\la)$ is not identically zero. Otherwise, it is said to be {\em singular}.
The {\em rank} of $L(\la)$, denoted $\rank L$, is the size of the largest non-identically
zero minor of $L(\la)$ (considering the minors as polynomials in $\la$), i.e., the rank of $L(\la)$
considered as a matrix over the field of rational functions in $\la$. In other words, it is the quantity
$
\max_{\lambda\in\mathbb C}\operatorname{rank}(A+\lambda B).
$
This is sometimes referred to as the
{\em normal rank} in the literature (see, for instance, \cite{eek2}). Note that, if $A+\lambda B$ is a
square $n\times n$ matrix pencil with rank $r<n$, then $A+\lambda B$ is singular.

The {\em reversal} $\rev(A+\lambda B)$ of a matrix pencil $A+\la B$ is the matrix pencil $B+\la A$.

By $L_\alpha$ we denote a {\em right singular block of order $\alpha$}, i.e., the $\alpha\times(\alpha+1)$
pencil
\[
L_\alpha:=\left[\begin{array}{cccc}
\la&1&&\\
&\ddots&\ddots&\\
&&\la&1
\end{array}\right]_{\alpha\times(\alpha+1)}.
\]

By $J_k(a-\la)$ we denote a pencil corresponding to a $k\times k$ Jordan block associated with the eigenvalue $a$, namely
\[
J_k(a-\la):=\left[\begin{array}{cccc}a-\la&1&\\&\ddots&\ddots\\&&a-\la&1\\&&&a-\la\end{array}\right]_{k\times k},
\]
and $R$ denotes the {\em reverse identity} matrix, namely
\[
R:=\left[\begin{array}{ccc}&&1\\&\iddots&\\1&&\end{array}\right],
\]
where the size will be clear by the context.

\begin{remark}\label{reversal.rem}
If $w\in\CC[\la]^n$ is a vector polynomial of degree (at most) $1$, and $v\in\CC^n$ (i.e., a constant vector)
then $\rev (vw^\star)=v\cdot(\rev w)^\star$.
\end{remark}

If $A+\lambda B$ is a regular $n\times n$ matrix pencil,
then it can be transformed to Weierstra{\ss}
canonical form (WCF). More precisely, there exist nonsingular matrices $S,T\in\mathbb C^{n\times n}$ such that
\begin{eqnarray*}
S(A+\lambda B)T&=&\diag\big(\mathcal J_{n_{1,1}}(a_1-\lambda),\dots,\mathcal J_{n_{1,g_1}}(a_1-\lambda),\dots,
\mathcal J_{n_{\kappa,1}}(a_\kappa-\lambda),\dots,\mathcal J_{n_{\kappa,g_\kappa}}(a_\kappa-\lambda),\\
&&\textcolor{white}{\diag\big(}\rev J_{n_{\kappa+1,1}}(-\lambda),\dots,\rev J_{n_{\kappa+1,g_{\kappa+1}}}(-\lambda)\big).
\end{eqnarray*}
Here $\kappa\in\mathbb N$, and $a_1,\dots,a_\kappa\in\mathbb C$ are the finite eigenvalues of $A+\lambda B$ with
geometric multiplicities $g_1,\dots,g_\kappa$, respectively. The value $g_{\kappa+1}$
is the geometric multiplicity of the infinite eigenvalue, where we allow $g_{\kappa+1}=0$ for the case that $\infty$ is
not an eigenvalue of the pencil. The parameters $n_{i,1},\dots,n_{i,g_i}$ are
called the \emph{partial multiplicities} of $A+\lambda B$ at $\lambda_i$. Without loss of generality, we may
assume that they are ordered non-increasingly, i.e., we have $n_{i,1}\geq\cdots\geq n_{i,g_i}$.

If $A+\lambda B$ is a singular $m\times n$ matrix pencil, then the
corresponding canonical form is the Kronecker canonical form (KCF): there exist nonsingular matrices $S\in\mathbb C^{m\times m}$
and $T\in\mathbb C^{n\times n}$ such that
\[
S(A+\lambda B)T=\diag\big(\widetilde L(\lambda),L_{\alpha_1},\dots,L_{\alpha_\eta},L_{\beta_1}^\top,\dots,L_{\beta_\xi}^\top\big)
\]
\vm{with $\widetilde L(\lambda)$ in WCF.}
Here, the parameters $\alpha_1,\dots,\alpha_\eta\in\mathbb N$ and $\beta_1,\dots,\beta_\xi\in\mathbb N$ are called the
{\em right} or {\em left minimal indices}, respectively.

\section{Representation of structured pencils as a sum of rank-$1$ pencils}\label{rank1.sec}


It is well-known, see e.~g. \cite{Gan59a}, that any Hermitian or symmetric matrix $A\in\CC^{n\times n}$ with $\rank A=r\leq n$
can be written as a sum of rank-$1$ matrices of the same structure (this is an immediate consequence of
the so-called {\em spectral decomposition}). In particular, if $A$ is  symmetric, then it can be written as
$
A= u_1u_1^\top+\cdots+ u_ru_r^\top
$
(or $A=s_1 u_1u_1^\top+\cdots+s_ru_ru_r^\top$ if we restrict ourselves
to real coefficients), whereas if $A$ is Hermitian, then it can be written as
$
A=s_1 u_1u_1^*+\cdots+s_ru_ru_r^*
$
where $s_1,\dots,s_r\in\{+1,-1\}$ are \emph{signs}. By \emph{Sylvester's Law of Inertia}, the numbers of
positive (resp. negative) signs among $s_1,\dots,s_r$ are uniquely determined.

It is natural to ask whether an analogous decomposition holds for matrix pencils \cm{with symmetry structures}.
The structures we are interested in are compiled in the following list. A matrix pencil $L(\la)=A+\la B$ with
$A,B\in\mathbb C^{n\times n}$ is said to be

\begin{itemize}
\item {\it Hermitian} if $A=A^*,B=B^*$;
\item {\it symmetric} if $A=A^\top,B=B^\top$;
\item {\it skew-Hermitian} if $A^*=-A,B^*=-B$;
\item {\it skew-symmetric} if $A^\top=-A,B^\top=-B$;
\item {\it $\star$-even} if $A^\star=A,B^\star=-B$;
\item {\it $\star$-odd} if $A^\star=-A,B^\star=B$;
\item {\it $\star$-palindromic} if $A^\star=B$;
\item {\it $\star$-anti-palindromic} if $A^\star=-B$.
\end{itemize}

The name {\em $\star$-alternating} is also used as an umbrella term for both $\star$-even and $\star$-odd.

For the sake of brevity, we will use the following notation for the set of $n\times n$ structured matrix
pencils with rank at most $r$, for each of the previous structures:
\begin{center}
\begin{tabular}{c|c}
structure&notation\\\hline
Hermitian&$\H_r$\\
symmetric&$Sym_r$\\
skew-Hermitian&$S\H_r$\\
skew-symmetric&$SSym_r$\\
$\star$-even&$Even_r^\star$\\
$\star$-odd&$Odd_r^\star$\\
$\star$-palindromic&$ Pal_r^\star$\\
$\star$-anti-palindromic&$Apal_r^\star$
\end{tabular}
\end{center}
Note that, for the ease of notation, and since all matrices considered in this paper are of the same size
$n\times n$, there is no explicit mention of the size in the notation introduced above.

We start by showing the existence of a decomposition of structured low-rank pencils as a sum of structured
rank-$1$ pencils. For this, we will use structured canonical forms for these kinds of pencils. These canonical
forms comprise the information displayed in the WCF, with the appropriate restrictions imposed by the corresponding
symmetry structure. We refer
to \cite{batzke-thesis} for these canonical forms, since they are all gathered in this reference, even
though all of them were introduced in earlier references. Furthermore, we focus on the case of Hermitian
pencils and will give a detailed proof for this case only, while for the cases of other structures we will
either reduce them to the Hermitian case or mention in which parts the proofs of the corresponding results
differ from the Hermitian case.

\subsection{Rank-$1$ decompositions for the Hermitian case}

First, we recall the well-known canonical form for Hermitian pencils under congruence, see, e.g.,
\cite[Theorem 2.20]{batzke-thesis}.
\begin{theorem}\label{thm:canformherm} {\rm(Canonical form of Hermitian pencils).}
Let $E(\lambda)$ be a Hermitian $n\times n$ matrix pencil. Then there exists a nonsingular matrix $P$ such that
\[
P^*E(\lambda)P=\operatorname{diag}\big(E_1(\lambda),\dots,E_m(\lambda)\big),
\]
where each pencil $E_j(\lambda)$, for $j=1,\dots,m$, has exactly one of the following four forms:
\begin{itemize}
\item[{\rm i)}] blocks $\sigma RJ_k(a-\lambda)$ associated with a real eigenvalue $a\in\mathbb R$ and a
sign $\sigma\in\{+1,-1\}$;
\item[{\rm ii)}] blocks
\[
\rev\big(\sigma RJ_k(-\lambda)\big)=\sigma\mat{cccc}&&&-1\\ &&-1&\lambda\\ &\iddots&\iddots&\\ -1&\lambda&&\rix
\]
associated with the eigenvalue infinity and a sign $\sigma\in\{+1,-1\}$;
\item[{\rm iii)}] blocks $R\diag\big(J_k(\overline{\mu}-\lambda),J_k(\mu-\lambda)\big)$ associated with
a pair $(\mu,\overline{\mu})$ of conjugate complex eigenvalues, with $\mu\in\mathbb C$ having positive
imaginary part;
\item[{\rm iv)}] blocks
\[
\mat{cc} 0&L_k^\top\\ L_k&0\rix
\]
consisting of a pair of one right and one left singular block with the same index $k$.
\end{itemize}
The parameters $a,k,\sigma$, and $\mu$ depend on the particular block $L_j(\lambda)$ and may be
distinct in different blocks. Furthermore, the canonical form is unique up to permutation of blocks.
\end{theorem}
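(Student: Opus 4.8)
The plan is to deduce the structured canonical form from the ordinary Kronecker canonical form together with the single algebraic identity $E(\lambda)^{*}=E(\bar\lambda)$, which holds for every Hermitian pencil $E(\lambda)=A+\lambda B$ because $A,B$ are Hermitian. Writing $E(\lambda)=S\,K(\lambda)\,T$ with $S,T$ nonsingular and $K(\lambda)$ in Kronecker canonical form, the identity reads $T^{*}K(\bar\lambda)^{*}S^{*}=S\,K(\lambda)\,T$, so $K(\lambda)$ and $K(\bar\lambda)^{*}$ are strictly equivalent. Since the Kronecker canonical form is a complete invariant for strict equivalence, one concludes at once that the left and right minimal indices of $E$ coincide as multisets, that every nonreal finite eigenvalue $\mu$ is accompanied by $\bar\mu$ with identical partial multiplicities, and that the partial multiplicities at a real finite eigenvalue, and at $\infty$, are not constrained by this symmetry. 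These are exactly the combinatorial features encoded in the block list i)--iv).

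The second and, I expect, hardest step is to realize this combinatorial picture by a single $*$-congruence $P^{*}E(\lambda)P$, splitting $E(\lambda)$ into a direct sum of a regular Hermitian pencil $E_{\mathrm{reg}}(\lambda)$ and a Hermitian pencil $E_{\mathrm{sing}}(\lambda)$ all of whose Kronecker blocks are singular. The difficulty is that the unstructured Kronecker reduction uses two independent transformations $S\neq T^{*}$ and need not respect congruence. The standard remedy is an inductive deflation: starting from a polynomial null vector $x(\lambda)$ of $E(\lambda)$ of minimal degree, together with its conjugate (which yields a left null vector), one isolates a singular summand of type iv) by a congruence, checks that the simultaneous $A$- and $B$-orthogonal complement of the subspace involved again carries a Hermitian pencil of strictly smaller size, and iterates; the $L_{0}$-blocks, corresponding to common null vectors of $A$ and $B$, simply peel off as $1\times1$ zero blocks. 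After this reduction the singular summand already has the form iv), since, by the first step, each right singular block $L_{k}$ is matched by a left block $L_{k}^{\top}$ of the same index.

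It then remains to put the regular summand $E_{\mathrm{reg}}(\lambda)$ into the form i)--iii); this is the classical Hermitian Weierstra{\ss} canonical form with sign characteristic, and I would argue eigenvalue by eigenvalue on the root subspaces. For a conjugate pair $(\mu,\bar\mu)$ with $\mu$ in the open upper half plane, the strict-equivalence constraint of the first step shows the corresponding part of $E_{\mathrm{reg}}$ is $*$-congruent to a direct sum of blocks $R\,\diag\big(J_{k}(\bar\mu-\lambda),J_{k}(\mu-\lambda)\big)$ by a direct computation, with no sign surviving. For a real finite eigenvalue $a$ the corresponding part is $*$-congruent to a direct sum of blocks $\sigma R J_{k}(a-\lambda)$, and the essential point is that each sign $\sigma$ is well defined: it is the inertia of a natural Hermitian form induced on the associated Jordan chain, so Sylvester's law of inertia turns it into an invariant. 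The infinite eigenvalue is reduced to the finite eigenvalue $0$ by passing to $\rev E_{\mathrm{reg}}(\lambda)$ and using Remark~\ref{reversal.rem}, which produces the blocks ii). Finally, uniqueness up to permutation of blocks follows from uniqueness of the Kronecker canonical form (which pins down $a$, $k$, $\mu$, and the block pattern) together with uniqueness of the sign characteristic via Sylvester's law of inertia (which pins down $\sigma$).
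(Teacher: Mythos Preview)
The paper does not prove Theorem~\ref{thm:canformherm}; it merely recalls it as a classical result, with the explicit reference ``see, e.g., \cite[Theorem 2.20]{batzke-thesis}''. There is therefore no proof in the paper to compare your proposal against.

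Your sketch is a reasonable outline of how such a proof goes in the literature (Kronecker form plus the symmetry constraint to get the combinatorics, then congruence-respecting deflation of the singular part, then the sign characteristic on the regular part via Sylvester). For the purposes of this paper, however, a citation is all that is expected; the theorem is used as a black box, not derived.
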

The signs $\sigma$ in the blocks of type i) and ii) in Theorem~\ref{thm:canformherm} are invariant
under congruence transformations and their collection is referred to as the \emph{sign characteristic}
of the Hermitian pencil following the terminology of \cite{GohLR05,MehNTX16}. The following result
presents a decomposition of a given Hermitian pencil as a sum of rank-$1$ Hermitian pencils, which extends the one for
Hermitian matrices mentioned at the beginning of this section. \vm{Hereafter}, we deal with polynomial vectors,
namely vectors $v(\la)\in\CC[\la]^n$, though, for brevity, in general we will drop the dependence on $\la$.
For a given $v(\la)\in\CC[\la]^n$, by $\deg v$ we denote the largest degree of the entries of $v$. In order to avoid confusion,
it is important to recall that, given a pencil $A+\la B$, we write $(A+\la B)^*$ to denote the pencil $A^*+\la B^*$,
i.e., we only apply the conjugate transpose to the coefficients of the pencil, and not to the variable $\la$.
\begin{theorem}\label{rank1-herm.th} {\rm(Rank-$1$ decomposition for Hermitian pencils).}
If $E(\la)$ is a Hermitian $n\times n$ matrix pencil with $\rank E=r\leq n$, then it can be written as
\begin{equation}\label{herm-rank1}
E(\la)=(a_1+\la b_1)u_1u_1^*+\cdots+(a_\ell+\la b_\ell)u_\ell u_\ell^*+v_1w_1^*+\cdots+v_sw_s^*
+w_1v_1^*+\cdots+w_sv_s^*,
\end{equation}
where $a_i,b_i\in\RR$, for $i=1,\hdots,\ell$, and
\begin{itemize}
\item[(i)] $\ell+2s=r$,
\item[(ii)] $\deg u_1=\cdots=\deg u_\ell=0=\deg v_1=\cdots=\deg v_s$ and $\,\deg w_1,\hdots,\deg w_s\leq 1$.
\end{itemize}
\end{theorem}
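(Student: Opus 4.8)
The plan is to use the canonical form for Hermitian pencils from Theorem~\ref{thm:canformherm}: write $P^*E(\la)P = \operatorname{diag}(E_1(\la),\dots,E_m(\la))$ with each $E_j$ of one of the four listed types, produce a structured rank-$1$ decomposition of each block $E_j(\la)$ separately, sum them up, and finally conjugate back by $P^{-*}$ and $P^{-1}$. The key observation that makes the final step harmless is that conjugation by $P$ preserves all three summand types appearing in \eqref{herm-rank1}: a term $(a+\la b)uu^*$ becomes $(a+\la b)(P^{-*}u)(P^{-*}u)^*$ with the same real scalars $a,b$ and constant-degree vector $P^{-*}u$; and a pair $vw^*+wv^*$ becomes $(P^{-*}v)(P^{-*}w)^* + (P^{-*}w)(P^{-*}v)^*$, where $P^{-*}v$ still has degree $0$ and $P^{-*}w$ still has degree $\le 1$ since $P$ is a constant matrix. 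The counts $\ell$ and $s$ are likewise unaffected. So it suffices to handle each canonical block.

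First I would dispose of the easy blocks. A block of type~iv), $\begin{bmatrix}0 & L_k^\top\\ L_k & 0\end{bmatrix}$, is a constant Hermitian matrix of rank $2k$ with $B$-part zero; being a constant Hermitian matrix it has a spectral decomposition into $2k$ terms $\pm uu^*$, but to match the form \eqref{herm-rank1} I would instead pair its rows/columns directly: it equals $\sum_{j=1}^{k}(e_j f_{k+j}^* + f_{k+j}e_j^*)$ for suitable constant vectors (the standard ``hyperbolic pair'' decomposition of an off-diagonal identity block), contributing $s$-type pairs only, with $\ell$-contribution $0$ and total rank $2k=0+2k$. For type~iii), the block $R\operatorname{diag}(J_k(\overline\mu-\la),J_k(\mu-\la))$ has rank $2k$; after a constant permutation it is congruent to a block-anti-diagonal pencil built from the $k\times k$ blocks $J_k(\mu-\la)$ and its conjugate, and a row-by-row expansion exhibits it as a sum of $k$ pairs $v_iw_i^*+w_iv_i^*$ with $\deg v_i=0$ and $\deg w_i\le 1$ (the single $\la$ sitting on the diagonal of each Jordan block), again $\ell=0$, $s=k$, $\ell+2s=2k$.

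The heart of the matter is the ``single Jordan block'' blocks of types~i) and~ii), where the parity phenomenon will show up. Consider $\sigma RJ_k(a-\la)$ with $a\in\RR$, $\sigma\in\{\pm1\}$; this has rank $k$. Writing it out, $RJ_k(a-\la)$ is the $k\times k$ matrix with $(a-\la)$ on the anti-diagonal and $1$'s on the adjacent anti-diagonal. I expect this to decompose as one $\ell$-type term plus some $s$-type pairs according to the parity of $k$: if $k=2t+1$ is odd, it should be $\sigma(a-\la)\,u_0u_0^*$ for the ``middle'' vector $u_0=e_{t+1}$ (note $a-\la$ is real-linear, so $a_0=\sigma a\in\RR$, $b_0=-\sigma\in\RR$ — the scalar must be an honest real linear polynomial, which is exactly why type~i) and~ii) blocks carry a sign), plus $t$ pairs $v_iw_i^*+w_iv_i^*$ using the symmetric off-anti-diagonal structure, with the $1$-entries and the $(a-\la)$-entries distributed so that $\deg w_i\le 1$; this gives $\ell$-contribution $1$, $s$-contribution $t$, total $1+2t=k$. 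If $k=2t$ is even, there is no middle vector and the block splits into exactly $t$ pairs $v_iw_i^*+w_iv_i^*$, giving $\ell$-contribution $0$, $s$-contribution $t$, total $2t=k$. The type~ii) block $\rev(\sigma RJ_k(-\la))$ is the reversal of $\sigma RJ_k(-\la)$, so by Remark~\ref{reversal.rem} its rank-$1$ decomposition is obtained from that of $\sigma RJ_k(-\la)$ by reversing each factor: a term $\sigma(-\la)u_0u_0^*$ reverses to $\sigma(-1)u_0u_0^*$ (degree drops but stays a real linear — actually constant — pencil, still of the first type with $b_0=0$), and a pair $v_iw_i^*+w_iv_i^*$ with $\deg v_i=0,\deg w_i\le1$ reverses to $v_i(\rev w_i)^*+(\rev w_i)v_i^*$, where $\deg(\rev w_i)\le 1$ still holds, so the structural form is preserved. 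Summing the block-wise decompositions, $\ell$ is the number of odd-size single-Jordan blocks at real/infinite eigenvalues, $2s$ is everything else, and $\ell+2s=r$ because the block sizes sum to $r=\rank E$.

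The main obstacle I anticipate is purely bookkeeping: writing down the explicit constant vectors $v_i,w_i$ (and $u_0$) for the single Jordan block of type~i)/ii) so that simultaneously (a) the outer products sum exactly to $\sigma RJ_k(a-\la)$, (b) each $v_i$ has degree $0$, (c) each $w_i$ has degree $\le1$ — the $\la$ must be pushed entirely into the $w$'s, and one must check no cross term produces a $\la^2$ or an unwanted off-diagonal entry — and (d) when $k$ is odd the leftover central term is genuinely of the scalar-times-$uu^*$ shape with a real scalar. This is a finite, checkable computation (essentially peeling off symmetric anti-diagonal pairs inward from the corners), but it is the only place where any real work happens; everything else is reduction to the canonical form and invariance under constant congruence.
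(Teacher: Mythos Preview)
Your approach is essentially the same as the paper's: reduce to the Hermitian canonical form, decompose each block type separately (with odd-size blocks at real/infinite eigenvalues contributing the $\ell$-terms via exactly the parity argument you describe), and conjugate back. The paper carries out explicitly the ``peeling-off'' computation you anticipate for types~i) and~ii), confirming your outline.

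One slip to correct: the type~iv) block $\left[\begin{smallmatrix}0&L_k^\top\\L_k&0\end{smallmatrix}\right]$ is \emph{not} a constant matrix. Recall that $L_k$ is the $k\times(k+1)$ singular pencil with $\la$ on the diagonal and $1$ on the superdiagonal, so the block genuinely depends on $\la$, and your proposed decomposition into pairs $e_jf_{k+j}^*+f_{k+j}e_j^*$ with both vectors constant cannot work. The fix is immediate and in the same spirit as your handling of types~i)--iii): the paper writes this block as $k$ pairs $v_iw_i^*+w_iv_i^*$ with $v_i=e_{k+1+i}$ constant and $w_i=[0,\dots,0,\la,1,0,\dots,0]^\top$ of degree~$1$. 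This does not affect your parity bookkeeping, since type~iv) still contributes only to $s$, not to $\ell$.
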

\begin{proof}
It suffices to prove the statement for $E(\la)$ being in Hermitian canonical form as in
Theorem~\ref{thm:canformherm}. To see this, just notice that if $K_E(\la)$ is the Hermitian canonical
form of $E(\la)$ and \cm{if} it has a decomposition
\[
K_E(\la)=(a_1+\la b_1)\widetilde u_1\widetilde u_1^*+\cdots+(a_\ell+\la b_\ell)\widetilde u_\ell\wt u_\ell^*
+\wt v_1\wt w_1^*+\cdots+\widetilde v_s\widetilde w_s^*+\wt w_1\wt v_1^*+\cdots+\wt w_s\wt v_s^*,
\]
\vm{as in \eqref{herm-rank1},} then there exists a nonsingular matrix $P$ such that
\[
\begin{array}{ccl}
E(\la)=PK_E(\la)P^*&=&(a_1+\la b_1)u_1u_1^*+\cdots+(a_\ell+\la b_\ell)u_\ell u_\ell^*\\
&&+v_1w_1^*+\cdots+v_sw_s^*+w_1v_1^*+\cdots+w_sv_s^*
\end{array}
\]
with $u_i=P\wt u_i, v_j=P\wt v_j,$ and $w_j=P\wt w_j$, for $i=1,\hdots,\ell$ and $j=1,\hdots,s$.
This gives the desired decomposition \eqref{herm-rank1} for $L(\la)$.

So we may assume $E(\la)$ to be in Hermitian canonical form, which is a direct sum of blocks of the four
different types i)--iv) as in Theorem~\ref{thm:canformherm}. We will provide a decomposition
like~\eqref{herm-rank1} for each of these blocks.

1) A $k\times k$ block associated with a real eigenvalue $a\in\mathbb R$ and sign $\sigma\in\{+1,-1\}$
can be decomposed as follows, depending on whether $k$ is odd or even.
If $k$ is even then
\begin{eqnarray*}
&&\sigma RJ_k(a-\la)\\
&=&\footnotesize\sigma\left[\begin{array}{ccccccc} &&&&&&a-\la\\&&&&&0&1/2\\&&&&a-\la&0\\ &&&0&1/2&&\\
&&\iddots&\iddots&&\\&a-\la&0&&\\0&1/2\end{array}\right]
+\sigma\left[\begin{array}{ccccccc} &&&&&&0\\&&&&&a-\la&1/2\\&&&&0&0\\ &&&a-\la&1/2&&\\
&&\iddots&\iddots&&\\&0&0&&\\a-\la&1/2\end{array}\right]\\
&=&\textcolor{white}{+}\,\sigma\left(\left[\begin{array}{c}a-\la\\1/2\\0_{k-2}\end{array}\right]e_k^*+
\left[\begin{array}{c}0_2\\a-\la\\1/2\\0_{k-4}\end{array}\right]e_{k-2}^*+\cdots+
\left[\begin{array}{c}0_{k-2}\\a-\la\\1/2\end{array}\right] e_2^*\right)\\
&&+\,\sigma\left(e_k\left[\begin{array}{c}a-\la\\1/2\\0_{k-2}\end{array}\right]^*
+e_{k-2}\left[\begin{array}{c}0_2\\a-\la\\1/2\\0_{k-4}\end{array}\right]^*
+\cdots+e_2\left[\begin{array}{c}0_{k-2}\\a-\la\\1/2\end{array}\right]^*\right),
\end{eqnarray*}
which is of the form \eqref{herm-rank1} with $v_i=\sigma e_{2i}$ and
$w_i=\left[\begin{array}{cccc}0_{k-2i}&a-\la&1/2&0_{2i-2}\end{array}\right]^*$, for $i=1,\hdots,k/2$.
Note that $\sigma$ can be included either in $v_i$ or $w_i$, for $i=1,\hdots,k/2$.

If $k$ is odd, then we can split the block in two pieces
\begin{eqnarray*}
&&\sigma RJ_k(a-\la)\\
&=&\sigma(a-\la) e_{\frac{k+1}{2}}e^*_{\frac{k+1}{2}}
+\sigma\left[\begin{array}{cccc|ccc} &&&&&&a-\la\\&&&&&\iddots&1\\&&&&a-\la&\iddots&\\ &&&0&1\\\hline &&a-\la&1\\
&\iddots&\iddots&&\\a-\la&1&&&&\end{array}\right]\\
&=&\sigma(a-\la) e_{\frac{k+1}{2}}e^*_{\frac{k+1}{2}}+\sigma\left(\left[\begin{array}{c}a-\la\\1\\
0_{k-2}\end{array}\right]e_k^*+\left[\begin{array}{c}0\\a-\la\\1\\0_{k-3}\end{array}\right]e_{k-1}^*+\cdots+
\left[\begin{array}{c}0_{\frac{k-3}{2}}\\a-\la\\1\\0_{\frac{k-1}{2}}\end{array}\right]e_{\frac{k-1}{2}}^*\right)\\
&&\textcolor{white}{\sigma(a-\la) e_{\frac{k+1}{2}}e^*_{\frac{k+1}{2}}}
+\sigma\left(e_k\left[\begin{array}{c}a-\la\\1\\0_{k-2}\end{array}\right]^*+e_{k-1}\left[\begin{array}{c}0\\
a-\la\\1\\0_{k-3}\end{array}\right]^*+\cdots+
e_{\frac{k-1}{2}}\left[\begin{array}{c}0_{\frac{k-3}{2}}\\a-\la\\1\\0_{\frac{k-1}{2}}\end{array}\right]^*\right),
\end{eqnarray*}
and proceed as in the previous case with the last two summands.

2) A $k\times k$ block associated with $\infty$ and sign characteristic $\sigma$ can be decomposed in a
similar way, replacing the roles of \cm{$a-\la$} and $1$ in the previous case by \cm{$-1$} and $\la$, respectively.

3) A pair of $k\times k$ blocks corresponding to a pair of complex conjugate eigenvalues $\mu,\overline\mu$
can be decomposed as
\begin{eqnarray*}
R\diag(J_k(\overline\mu-\lambda),J_k(\mu-\lambda))
&=&\left[\begin{array}{cccc|cccc}&&&&&&&\mu-\lambda\\&&&&&&\iddots&1\\&&&&&\mu-\lambda&\iddots\\&&&&\mu-\lambda&1 \\\hline
&&&\overline\mu-\lambda&&&&\\&&\iddots&1&&&&\\&\overline\mu-\lambda&\iddots&&&&&\\\overline\mu-\lambda&1&&&&&&
\end{array}\right]\\
&=&\textcolor{white}{+}\left[\begin{array}{c}\mu-\lambda\\1\\0_{2k-2}\end{array}\right]e_{2k}^*
+\cdots+\left[\begin{array}{c}0_{k-2}\\\mu-\lambda\\1\\{0_k}\end{array}\right]e_{k+2}^*
+\left[\begin{array}{c}0_{k-1}\\\mu-\lambda\\{0_k}\end{array}\right]e_{k+1}^*\\
&&+e_{k+1}\left[\begin{array}{c}0_{k-1}\\\mu-\lambda\\{0_k}\end{array}\right]^*
+e_{k+1}\left[\begin{array}{c}0_{k-2}\\\mu-\lambda\\1\\{ 0_k}\end{array}\right]^*
+\cdots+e_{2k}\left[\begin{array}{c}\mu-\lambda\\1\\0_{ 2k-2}\end{array}\right]^*,
\end{eqnarray*}
which is of the desired form.

4) Finally, a pair consisting of a left and a right singular block with respective sizes $k\times(k+1)$
and $(k+1)\times k$ can be decomposed as
\[
\left[\begin{array}{cc}0&L_k^\top\\L_k&0 \end{array}\right]=
e_{k+1}\left[\begin{array}{c}\la\\1\\0_{2k-1}\end{array}\right]^*+\cdots+e_{2k+1}\left[\begin{array}{c}0_{k-1}\\
\la\\1\\0_{k}\end{array}\right]^*
+\left[\begin{array}{c}\la\\1\\0_{2k-1}\end{array}\right]e_{k+1}^*+\cdots+\left[\begin{array}{c}0_{k-1}\\
\la\\1\\0_k\end{array}\right]e_{2k+1}^*,
\]
which is, again, in the desired form.

So each block in the canonical form has a decomposition like~\eqref{herm-rank1}. Forming this direct sum
by padding up with zeroes in the entries of each vector corresponding to the other blocks, we arrive at a
decomposition~\eqref{herm-rank1} for $E(\la)$ given in Hermitian canonical form.
\end{proof}

\begin{remark}\label{rem:concise}
Note that $u_1,\hdots,u_\ell$ and $v_1,\hdots,v_s$ are constant vectors, but $w_1,\hdots,w_s$ are (column)
pencils, which means that their entries are polynomials in $\la$ with degree at most $1$.
Thus writing $w_i(\lambda)=w_{i,A}+\lambda w_{i,B}$ for $i=0,\dots,s$ with
$w_{1,A},\dots,w_{s;A},w_{1,B},\dots,w_{s,B}\in\mathbb C^n$ and using the notation
\begin{align*}
U:=&\mat{ccc}u_1&\dots&u_\ell\rix, &
V:=&\mat{ccc}v_1&\dots&v_s\rix,\\
W_A:=&\mat{ccc}w_{1,A}&\dots&w_{s,A}\rix,&
W_B:=&\mat{ccc}w_{1,B}&\dots&w_{s,B}\rix,\\
D_A:=&\operatorname{diag}(a_1,\dots,a_\ell),&
D_B:=&\operatorname{diag}(b_1,\dots,b_\ell),
\end{align*}
we can write~\eqref{herm-rank1} in the concise form
\begin{equation}\label{eq:concise}
E(\lambda)=U(D_A+\lambda D_B)U^*+V(W_A^*+\lambda W_B^*)+(W_A+\lambda W_B)V^*.
\end{equation}
\end{remark}

\begin{remark}\label{rem:oddblocks}
By the construction in the proof of Theorem~\ref{rank1-herm.th}, the terms of the form $(a+\la b)uu^*$ in the
decomposition~\eqref{herm-rank1} come either from blocks associated with real eigenvalues or from blocks
associated with the infinite eigenvalue, and in both cases the blocks have odd size.
\end{remark}

\begin{remark}\label{rem:linind}
If~\eqref{herm-rank1} is a decomposition into rank-$1$ pencils as in Theorem~\ref{rank1-herm.th}, then
the vectors $u_1,\dots,u_\ell$, $v_1,\dots,v_s$ are linearly independent. To see this, assume that they are linearly dependent.
Let $X:=\mat{ccc}X_1&X_2&X_3\rix\in\mathbb C^{n\times n}$ be nonsingular such
that the columns of $\mat{cc}X_1&X_2\rix\in\mathbb C^{n\times(p+q)}$ span the orthogonal complement of
the span of $v_1,\dots,v_s$ and the columns of $X_1\in\mathbb C^{n\times p}$ span the orthogonal complement
of the span of $u_1,\dots,u_\ell,v_1,\dots,v_s$. Then we have $ p+q\geq n-s$ and, because of the
assumed linear dependency, $ p> n-(\ell+s)$. Observe that
\[
X^*E(\lambda)X=\bmat{& \scriptstyle p & \scriptstyle q &\scriptstyle n-p-q\cr
\scriptstyle p & 0&0&X_1^*E(\lambda)X_3\cr \scriptstyle q & 0&X_2^*E(\lambda)X_2 & X_2^*E(\lambda)X_3\cr
\scriptstyle n-p-q & X_3^*E(\lambda)X_1 & X_3^*E(\lambda)X_f & X_3^*E(\lambda)X_3}
\]
 from which we obtain that the rank of $E(\lambda)$ is bounded by
\[
2(n-p-q)+q=n-p+n-p-q\cm{\,<\,} \ell+s+s=r,
\]
which is in contradiction to the assumption in Theorem~\ref{rank1-herm.th} that $E(\lambda)$ has rank $r$.
\end{remark}

Unfortunately, the decomposition~\eqref{herm-rank1} is far from being unique as the following
example illustrates.

\begin{example}\label{ex:twodecomps}\rm
Consider the Hermitian pencil
\[
E(\lambda):=\lambda\mat{cc}0&1\\ 1&0\rix-\mat{cc}0&1\\1&0\rix=\mat{cc}0&\lambda-1\\ \lambda-1&0\rix
\]
and let $u_1=\frac{1}{\sqrt{2}}\mat{cc}1&1\rix^\top$, $u_2=\frac{1}{\sqrt{2}}\mat{cc}-1&1\rix^\top$,
$v_1=\mat{cc}1&0\rix^\top$, and $w_1=\mat{cc}0&\lambda-1\rix^\top$. Then we have
\[
E(\lambda)=(\lambda-1)u_1u_1^*-(\lambda-1)u_2u_2^*=v_1w_1^*+w_1v_1^*.
\]
\end{example}

In particular, Example~\ref{ex:twodecomps} shows that also the parameters $\ell$ and $s$ from
Theorem~\ref{rank1-herm.th} are not unique, as in the first decomposition we have $\ell=2$ and $s=0$ and in the
latter we have $\ell=0$ and $s=2$. However, the values of $\ell$ and $s$ can be fixed by requiring $\ell$ to be
minimal. Interestingly, in that case the minimal parameter $\ell$ depends on the sign characteristic of the
Hermitian pencil. In order to state the following theorem, we recall the definition
of the so-called \emph{sign sum} from \cite{Meh00}.

\begin{definition}\label{def:signsum}
Let $E(\lambda)$ be a Hermitian $n\times n$ pencil and let $\mu\in\mathbb R$ be an eigenvalue of $E(\lambda)$.
Assume that $(n_1,\dots,n_m,n_{m+1},\dots,n_q)$ are the sizes of the blocks associated with the eigenvalue
$\mu$ in the Hermitian canonical form of $E(\lambda)$, where $n_1,\dots,n_m$ are odd and $n_{m+1},\dots,n_q$
are even. Furthermore, let $(\sigma_1,\dots,\sigma_m,\sigma_{m+1},\dots,\sigma_q)$ be the corresponding signs
(of the blocks associated with $\mu$) from the sign characteristic of $E(\lambda)$. Then the
\emph{signsum} $\operatorname{sigsum}(E,\mu)$ of $\mu$ is defined as
\[
\operatorname{sigsum}(E,\mu):=\sum_{j=1}^m\sigma_j.
\]
If $\infty$ is an eigenvalue of $E(\lambda)$, then the \emph{signsum}
of $\infty$ is defined as
\[
\operatorname{sigsum}(E,\infty):=\operatorname{sigsum}(\rev E,0).
\]
\end{definition}

Thus, the signsum of the real eigenvalue $\mu$ of a Hermitian pencil is just the sum of the signs that
correspond to blocks of odd size associated with $\mu$.

\begin{example}\rm
Consider the following three Hermitian pencils
\[
E_1(\lambda)=\mat{cccc}1-\lambda&0&0&0\\ 0&0&0&1-\lambda\\ 0&0&1-\lambda&1\\ 0&1-\lambda&1&0\rix,
\]
\[
E_2(\lambda)=\mat{cc}1-\lambda&0\\ 0&\lambda-1\rix,\quad
E_3(\lambda)=\mat{cc}0&1-\lambda\\ 1-\lambda&1\rix,
\]
which all have just the single eigenvalue $a=1$. Then we have
$\operatorname{sigsum}(E_1,1)=2$, since $E_1(\lambda)$ has two odd-sized blocks associated with $a=1$
(one of size one and one of size three), both having the sign $+1$. On the other hand
$\operatorname{sigsum}(E_2,1)=0$ as $E_2(\lambda)$ has two blocks of size one, but with opposite signs
$+1$ and $-1$. For the pencil $E_3(\lambda)$, we also obtain $\operatorname{sigsum}(E_3,1)=0$, because
it has no odd-sized blocks associated with the eigenvalue $a=1$, but just one block of size two.
In that case, the sum in Definition~\ref{def:signsum} is empty and thus, by definition, equal to zero.
\end{example}

\begin{theorem}\label{thm:sigsum}
Let $E(\lambda)$ be a Hermitian $n\times n$ pencil and let $\mu_1,\dots,\mu_p\in\mathbb R\cup\{\infty\}$ be the
pairwise distinct real eigenvalues of $E(\lambda)$. (Infinity is interpreted as a possible real
eigenvalue here.) Furthermore, let~\eqref{herm-rank1} as in Theorem~\mbox{\rm\ref{rank1-herm.th}} be a decomposition
of $E$ into rank-$1$ pencils so that the parameter $\ell$ from Theorem~\mbox{\rm\ref{rank1-herm.th}}
is minimal among all possible such decompositions. Then
\begin{equation}\label{eq:signsum}
\ell=\sum_{j=1}^p|\operatorname{sigsum}(E,\mu_j)|.
\end{equation}
\end{theorem}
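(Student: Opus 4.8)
The plan is to establish the equality $\ell = \sum_{j=1}^p |\operatorname{sigsum}(E,\mu_j)|$ by proving the two inequalities separately, working block-by-block on the Hermitian canonical form and exploiting the fact that the problem decouples over the distinct real eigenvalues. For the inequality $\ell \ge \sum_{j=1}^p |\operatorname{sigsum}(E,\mu_j)|$, I would argue as follows. Fix a real eigenvalue $\mu_j$ and localize the decomposition near $\mu_j$: since the blocks of types iii) and iv) in Theorem~\ref{thm:canformherm} contribute no restriction at real points and the only blocks of type i)/ii) that ``see'' $\mu_j$ are those associated with $\mu_j$ itself, one can pass to a quotient or restriction that isolates the part of $E(\lambda)$ supported at $\mu_j$. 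Concretely, writing the decomposition~\eqref{herm-rank1} in the concise form~\eqref{eq:concise}, I would evaluate a suitably normalized version of $E(\lambda)$ at $\lambda=\mu_j$ (or differentiate, to capture the odd-block contributions) and compute an \emph{inertia} invariant. The key point is that the sign characteristic is exactly the inertia data that is invariant under congruence, and each term $(a_i+\lambda b_i)u_iu_i^*$ contributes at most $\pm 1$ to this inertia at the point $-a_i/b_i$ (or at $\infty$), while each pair $v_iw_i^* + w_iv_i^*$ contributes inertia that is balanced (equal numbers of $+1$ and $-1$), hence contributes $0$ to the signsum. Summing the $|\operatorname{sigsum}(E,\mu_j)|$ many ``excess'' signs that the mixed terms cannot account for forces $\ell$ to be at least this large. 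Here Sylvester's Law of Inertia (quoted in the excerpt) and the invariance of the sign characteristic under congruence are the workhorses.

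For the reverse inequality $\ell \le \sum_{j=1}^p |\operatorname{sigsum}(E,\mu_j)|$, I would construct an explicit decomposition achieving this value, again block-by-block using the constructions already carried out in the proof of Theorem~\ref{rank1-herm.th}. The blocks of types iii) and iv) already have decompositions with $\ell=0$, so they cost nothing. For an odd-sized block $\sigma RJ_k(a-\lambda)$ of type i) with $a=\mu_j$, the proof of Theorem~\ref{rank1-herm.th} already produces one term $\sigma(a-\lambda)e_{(k+1)/2}e_{(k+1)/2}^*$ plus mixed terms; so each odd block contributes exactly one ``$(a+\lambda b)uu^*$''-term with sign $\sigma$. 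The idea is then to \emph{pair up} odd blocks at the same eigenvalue $\mu_j$ that carry opposite signs: two such terms $(\mu_j-\lambda)(uu^* - u'u'^*)$ can be rewritten — just as in Example~\ref{ex:twodecomps} — as $v w^* + w v^*$ with $v,w$ built from $u\pm u'$, eliminating two diagonal terms at the cost of none. After maximal pairing, the number of surviving diagonal terms associated with $\mu_j$ equals $|\sum \sigma| = |\operatorname{sigsum}(E,\mu_j)|$ (the unpaired signs all share the majority sign), and the even blocks and the infinite eigenvalue are handled by the analogous constructions and the $\rev$ reduction built into Definition~\ref{def:signsum}. Summing over $j$ gives a valid decomposition with $\ell = \sum_j |\operatorname{sigsum}(E,\mu_j)|$, so the minimal $\ell$ is no larger.

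The main obstacle I anticipate is the lower-bound direction, specifically making rigorous the claim that the mixed terms $v_iw_i^* + w_iv_i^*$ cannot contribute to the signsum at any real point. The difficulty is that $w_i$ is a genuine pencil (degree $\le 1$), so $v_iw_i^*(\mu_j) + w_i(\mu_j)v_i^*$ is an honest Hermitian matrix that \emph{can} have nonzero, and even unbalanced-looking, inertia once cut down to a subspace; one must show that the relevant localized inertia invariant — the one that actually computes the signsum, i.e. roughly the inertia of the ``leading coefficient'' of $E$ transverse to the kernel filtration at $\mu_j$ — is insensitive to any rank-one-plus-its-adjoint summand. I would isolate this as a lemma: if $N(\lambda)$ is Hermitian and $M(\lambda) = vw^* + wv^*$ with $\deg v = 0$, $\deg w \le 1$, then $N$ and $N+M$ have the same signsum at every real eigenvalue. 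This should follow from a careful congruence argument, perhaps by an inductive reduction on $\operatorname{rank} M$ and a normal-form for the single summand $vw^*+wv^*$ (which, by type iv) of Theorem~\ref{thm:canformherm} together with blocks coming from a conjugate pair, has signsum zero everywhere), combined with a stability/continuity or Courant–Fischer-type monotonicity argument controlling how the signsum changes under a Hermitian perturbation of controlled structure. Once this lemma is in place, the lower bound follows by stripping off the mixed terms one at a time and reducing to the purely diagonal pencil $U(D_A+\lambda D_B)U^*$, whose signsum at $\mu_j$ is visibly bounded in absolute value by the number of its diagonal terms sitting at $\mu_j$.
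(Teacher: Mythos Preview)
Your upper-bound argument (``$\leq$'') is correct and is exactly the paper's: pair odd blocks at the same real eigenvalue with opposite signs via $v_k=u_i+\iunit u_j$, $w_k=\tfrac12(a-\lambda)(u_i-\iunit u_j)$, leaving precisely $|\operatorname{sigsum}(E,\mu_j)|$ unpaired diagonal terms at each $\mu_j$.

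The lower-bound direction has a genuine gap. Your proposed lemma --- that $N$ and $N+vw^*+wv^*$ have the same total signsum $\sum_\mu|\operatorname{sigsum}(\cdot,\mu)|$ --- is \emph{false} without further constraints. Take $N(\lambda)=\operatorname{diag}(1-\lambda,\lambda)$, so $\Sigma(N)=|{+}1|+|{-}1|=2$, and $M=e_1e_2^*+e_2e_1^*$; then $N+M=\begin{pmatrix}1-\lambda&1\\1&\lambda\end{pmatrix}$ has characteristic polynomial $-\lambda^2+\lambda-1$ with nonreal roots, so $\Sigma(N+M)=0$. Thus ``stripping off'' mixed terms one at a time can change the total signsum, and the inductive reduction to the diagonal part $U(D_A+\lambda D_B)U^*$ does not go through as stated. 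One might hope a rank hypothesis such as $\operatorname{rank}(N+M)=\operatorname{rank}N+2$ rescues the lemma, but this need not hold at every intermediate step of the stripping.

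The paper sidesteps the stripping altogether. It uses Remark~\ref{rem:linind} (linear independence of $u_1,\dots,u_\ell,v_1,\dots,v_s$) to bring $E$ by a single congruence to the block form
\[
PE(\lambda)P^*=\begin{pmatrix}0&0&S_A^*+\lambda S_B^*\\0&D_A+\lambda D_B&*\\S_A+\lambda S_B&*&*\end{pmatrix},
\]
and then evaluates the \emph{constant} Hermitian matrix $E(\widehat\lambda)$ at non-eigenvalue points $\widehat\lambda=\mu\pm\varepsilon$: from the block form one reads off $\operatorname{ind}\big(E(\widehat\lambda)\big)=(s,s,n-r)+\operatorname{ind}(D_A+\widehat\lambda D_B)$. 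This is the precise form of your intuition that the mixed terms contribute balanced inertia --- but implemented at regular points, not via an invariance-under-perturbation lemma. Comparing the inertia jump across $\mu$ computed from the Hermitian canonical form (which equals $\operatorname{sigsum}(E,\mu)$) with the jump computed from $D_A+\lambda D_B$ (which equals $\pm m$ once one assumes WLOG that all signs at $\mu$ in $D_A+\lambda D_B$ agree) yields $m=|\operatorname{sigsum}(E,\mu)|$ and hence $\ell=\sum_\mu m=\ell_0$. The essential technical move you are missing is this evaluation at $\mu\pm\varepsilon$ rather than at $\mu$ itself; it avoids all questions about how the canonical form or the signsum behaves under rank-two perturbations.
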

\begin{proof}
In the following, let $\ell_0$ denote the right-hand-side of~\eqref{eq:signsum}, i.e.,
$\ell_0=\sum_{j=1}^p|\operatorname{sigsum}(E,\mu_j)|$.\\
``$\leq$'': We first show that there exists a decomposition as in~\eqref{herm-rank1} such that $\ell=\ell_0$.
Using the same construction as in the proof of Theorem~\ref{rank1-herm.th}, we see from Remark~\ref{rem:oddblocks}
that in their decomposition into rank-$1$ pencils
only blocks of odd-size that are associated with real eigenvalues (including $\infty$) have a term of the
form $(a+\lambda b)uu^*$ (with $a,b\in\mathbb R$ and $u\in\mathbb C^n$), and thus only those blocks contribute to the
number $\ell$ in the decomposition~\eqref{herm-rank1}.
Therefore and because it is sufficient to consider each real eigenvalue separately, we may assume,
without loss of generality, that $E(\lambda)$ is regular and
only has a single eigenvalue $\mu$ that is real \cm{and finite}, such that all blocks in the Hermitian canonical form of $E(\lambda)$
associated with $\mu$ have odd size. We then have to show that $E(\lambda)$ has a decomposition
as in~\eqref{herm-rank1} with $\ell=|\operatorname{sigsum}(E,\mu)|$.

To this end, assume that the Hermitian canonical form of the pencil $E(\lambda)$ consists of $m$ blocks
with size $n_1,\dots,n_m$ (which are all odd). Let $\sigma_1,\dots,\sigma_m$ be the signs from the sign
characteristic of $E(\lambda)$, where $\sigma_j$ is associated with $n_j$ for $j=1,\dots,m$. By the
construction in the proof of Theorem~\ref{rank1-herm.th}, we then obtain a decomposition of the form
\begin{equation}\label{eq:sigsumdec}
E(\lambda)=\sigma_1(a-\lambda)u_1u_1^*+\cdots+\sigma_m(a-\lambda)u_mu_m^*+v_1w_1^*+\cdots+v_sw_s^*+ w_1v_1^*+\cdots+ w_sv_s^*.
\end{equation}
Suppose that $m=m_++m_-$, where $m_+$ is the number of blocks with positive sign $\sigma_j$ and
$m_-$ is the number of blocks with negative sign $\sigma_j$. Then $\operatorname{sigsum}(E,a)=|m_+-m_-|$,
i.e., if we try to pair up the blocks into pairs consisting of two blocks with opposite signs (but
possibly different sizes) then
the signsum of $a$ corresponds to the number of blocks that will remain unpaired. In particular, all
of these remaining blocks will have the same sign. Thus, to prove the assertion, it remains to show that
in the decomposition~\eqref{eq:sigsumdec} each summand
\[
(a-\lambda)u_iu_i^*-(a-\lambda)u_ju_j^*
\]
(where we have $\sigma_i=1$ and $\sigma_j=-1$) can be replaced by a summand of the form
$v_kw_k^*+w_kv_k^*$ with $v_k\in\mathbb C^n$ and $w_k$ being an $n\times 1$ pencil.
This goal can be achieved by choosing $v_k=u_i+\iunit u_j$ and $w_k=\frac{1}{2}(a-\lambda)(u_i-\iunit u_j)$.

``$\geq$'': It remains to show that $\ell$ cannot be chosen smaller than $\ell_0$.
Thus, let~\eqref{herm-rank1} be a decomposition of $E(\lambda)$ into rank-1-pencils with some
$\ell<\ell_0$. By Remark~\ref{rem:linind}, the columns of the matrix $\mat{cc}U&V\rix$ with
$U=\mat{ccc}u_1&\dots&u_\ell\rix$ and $V=\mat{ccc}v_1&\dots&v_s\rix$ are linearly independent.
Thus, let $X\in\mathbb C^{n\times (n-s-\ell)}$ be such that $\mat{ccc}X&U&V\rix$ is invertible and set
$P:=\mat{ccc}X&U&V\rix^{-1}$. Then we obtain
\[
PE(\lambda)P^*=\bmat{&\scriptstyle n-s-\ell & \scriptstyle \ell & \scriptstyle s\cr
\scriptstyle n-s-\ell & 0 & 0 & S_A^*+\lambda S_B^*\cr \scriptstyle\ell &0&D_A+\lambda D_B&\ast\cr
\scriptstyle s & S_A+\lambda S_B & \ast & \ast},
\]
where $S_A+\lambda S_B$ are the first $n-s-\ell$ columns of $(W_A^{ *}+\lambda W_B^{ *})P^*$, and where
$D_A,D_B,W_A,W_B$ are as in Remark~\ref{rem:concise}. In particular, all eigenvalues of $D_A+\lambda D_B$
are real and semisimple, because the pencil $D_A+\lambda D_B$ is diagonal. Furthermore, we can assume that
if $D_A+\lambda D_B$ has a multiple eigenvalue, say $\mu$, then all signs in the sign characteristic
of $D_A+\lambda D_B$ associated with $\mu$ are equal. Otherwise, we may use the trick from the part ``$\leq$''
to get a decomposition of the form~\eqref{herm-rank1} with an even smaller $\ell$.

Note that $S_A+\lambda S_B$ must be of full normal rank $s$, because otherwise the pencil $E(\lambda)$
would have less than $r=s+\ell+s$ linearly independent columns. Thus, in particular $S_A-\eta S_B$ has rank $s$
for all values $\eta\in\mathbb C$ that are not eigenvalues of $E(\lambda)$. This implies that the only eigenvalues
of $E(\la)$ are the eigenvalues of $D_A+\la D_B$. Moreover, if we denote the eigenvalues of $E(\la)$ by $\mu_1,\hdots,\mu_d$,
with respective algebraic multiplicities $m_1,\hdots,m_d$, then we have $\ell=\sum_{j=1}^d m_j$. Now, it suffices to prove
that $m_j=|\operatorname{sigsum}(E,\mu_j)|$, for $j=1,\hdots,d$. This will prove that $\ell=\ell_0$, a contradiction to
the assumption $\ell<\ell_0$. So let $\mu$ be one of
the eigenvalues of $D_A+\lambda D_B$, i.e., $\mu$ is real (or infinite). Suppose first that $\mu\in\mathbb R$.
Then for sufficiently small $\varepsilon>0$, we have that no $\widehat\lambda\in[\mu-\varepsilon,\mu+\varepsilon]\setminus\{\mu\}$
is an eigenvalue of $E(\lambda)$. Consequently, for all such $\widehat\lambda$, there exist a nonsingular matrix
$M\in \mathbb C^{(n-s-\ell)\times(n-s-\ell)}$ (depending on $\widehat\lambda$) such that
\[
(S_A+\widehat\lambda S_B)M=\bmat{ & \scriptstyle n-r & \scriptstyle s\cr \scriptstyle s & 0& S},
\]
where $S\in\mathbb C^{s\times s}$ is invertible (and also depends on $\widehat\lambda$). But this implies that
\[
\mat{ccc}M^*&0&0\\ 0&I&0\\ 0&0&I\rix PE(\widehat\lambda)P^*\mat{ccc}M&0&0\\ 0&I&0\\ 0&0&I\rix
=\bmat{ & \scriptstyle n-r & \scriptstyle s & \scriptstyle \ell & \scriptstyle s \cr
\scriptstyle n-r & 0&0&0&0\cr \scriptstyle s&0&0&0&S^*\cr \scriptstyle\ell &0&0&D_A+\widehat\lambda D_B&\ast\cr
\scriptstyle s &0&S&\ast& \ast},
\]
and due to the nonsingularity of $S$, we can easily read off the inertia index from the Hermitian
matrix $E(\widehat\lambda)$. If $\operatorname{ind}(H)=(\nu_+,\nu_-,\nu_0)$ denotes the inertia index of
a given Hermitian matrix $H$, i.e., $\nu_+$, $\nu_-$, and $\nu_0$ are the numbers of positive, negative, and
zero eigenvalues of $H$ (counted with multiplicities), respectively, then we easily obtain
(see also \cite[Lemma 6]{Meh00}) that
\[
\operatorname{ind}\big(E(\widehat\lambda)\big)=(s,s,n-r)+\operatorname{ind}(D_A+\widehat\lambda D_B),
\]
where the sum of triples is taken componentwise. Assume that $\operatorname{ind}(D_A+\mu D_B)=(d_+,d_-,m)$,
i.e., $m$ is the algebraic multiplicity of the eigenvalue $\mu$ of $D_A+\mu D_B$.
Then it follows that
\[
\operatorname{ind}\big(E(\mu-\varepsilon)\big)=(s+d_+,s+d_-+m,n-r)\quad\mbox{and}\quad
\operatorname{ind}\big(E(\mu+\varepsilon)\big)=(s+d_++m,s+d_-,n-r)
\]
if the sign of $\mu$ in the sign characteristic of $D_A+\lambda D_B$ is positive (recall that
all signs associated with $\mu$ in the sign characteristic of $D_A+\lambda D_B$ are equal), or
\[
\operatorname{ind}\big(E(\mu-\varepsilon)\big)=(s+d_++m,s+d_-,n-r)\quad\mbox{and}\quad
\operatorname{ind}\big(E(\mu+\varepsilon)\big)=(s+d_+,s+d_-+m,n-r)
\]
if the sign of $\mu$ in the sign characteristic of $D_A+\lambda D_B$ is negative.
Similarly, checking the change of inertia index of $E(\widehat\lambda)$ based on its Hermitian canonical form,
a straightforward computation shows that the number of positive or negative eigenvalues change by
the number $\operatorname{sigsum}(\mu)$ when $\widehat\lambda$ passes from $\mu-\varepsilon$ to
$\mu+\varepsilon$. This shows that we must have $m=|\operatorname{sigsum}(\mu)|$.

Finally, assume that $\mu=\infty$ is an eigenvalue of $D_A+\lambda D_B$ with algebraic multiplicity $m$.
If $\eta>0$ is sufficiently large such that all finite eigenvalues of $E(\lambda)$ are contained in the
interval $]-\eta,\eta[$, then a similar comparison of the inertia indices of $E(\eta)$ and $E(-\eta)$
reveals that the algebraic multiplicity of $\infty$ as an eigenvalue of $D_A+\lambda D_B$ must be
$|\operatorname{sigsum}(\infty)|$. 
\end{proof}

\subsection{Rank-$1$ decomposition for other structures}

Next, we consider a decomposition analogous to~\eqref{herm-rank1} for the other structures mentioned
at the beginning of this section. For most of these decompositions, observations similar to the ones
in Remark~\ref{rem:concise}--\ref{rem:linind} can be made, but for the
sake of brevity we refrain from stating them explicitly.
\begin{theorem}\label{rank1-sym.th} {\rm(Rank-$1$ decomposition for symmetric pencils).}
If $E(\la)$ is a symmetric $n\times n$ matrix pencil with $\rank E=r\leq n$, then it can be written as
\begin{equation}\label{sym-rank1}
E(\la)=(a_1+\la b_1)u_1u_1^\top+\cdots+(a_\ell+\la b_\ell)u_\ell u_\ell^\top+v_1w_1^\top+\cdots+v_sw_s^\top
+w_1v_1^\top+\cdots+w_sv_s^\top,
\end{equation}
where $a_i,b_i\in\CC$, for $i=1,\hdots,\ell$, and
\begin{itemize}
\item[(i)] $\ell+2s=r$,
\item[(ii)] $\deg u_1=\cdots=\deg u_\ell=0=\deg v_1=\cdots=\deg v_s$ and $\deg w_1,\hdots,\deg w_s\leq 1$.
\end{itemize}
\end{theorem}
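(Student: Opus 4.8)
The plan is to mimic exactly the strategy used for the Hermitian case in the proof of Theorem~\ref{rank1-herm.th}, replacing the conjugate transpose $*$ by the transpose $\top$ throughout. First I would reduce to the case where $E(\lambda)$ is in the canonical form for symmetric pencils under congruence: if $K_E(\lambda)$ is that canonical form and it admits a decomposition of the type~\eqref{sym-rank1}, then pulling it back through the congruence $E(\lambda)=PK_E(\lambda)P^\top$ gives the decomposition for $E(\lambda)$ with vectors $u_i=P\widetilde u_i$, $v_j=P\widetilde v_j$, $w_j=P\widetilde w_j$. This works precisely because the congruence for symmetric pencils uses $P^\top$ (not $P^*$), so the rank-one building blocks $uu^\top$, $vw^\top+wv^\top$ transform into blocks of the same form. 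So it suffices to produce a decomposition~\eqref{sym-rank1} for each block of the canonical form.

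Next I would recall the canonical form of symmetric complex pencils under congruence (see, e.g., \cite{batzke-thesis}). In contrast to the Hermitian case, over $\mathbb{C}$ the symmetric canonical form has no sign characteristic: the blocks are of the form $RJ_k(a-\lambda)$ for a finite eigenvalue $a\in\mathbb{C}$, the reversal $\rev(RJ_k(-\lambda))$ for the infinite eigenvalue, and the singular block $\left[\begin{smallmatrix}0&L_k^\top\\L_k&0\end{smallmatrix}\right]$ pairing one left and one right singular block of the same index. (There are no complex-conjugate-pair blocks, since over $\mathbb{C}$ every eigenvalue is an ordinary one.) Then for each block type I would write down essentially the same explicit decomposition as in the proof of Theorem~\ref{rank1-herm.th}, with $*$ replaced by $\top$: for a block $RJ_k(a-\lambda)$ with $k$ even I use $v_i=e_{2i}$, $w_i=\left[\begin{smallmatrix}0_{k-2i}&a-\lambda&1/2&0_{2i-2}\end{smallmatrix}\right]^\top$ for $i=1,\dots,k/2$, contributing $s$-terms only; with $k$ odd I split off the central $(a-\lambda)e_{(k+1)/2}e_{(k+1)/2}^\top$ term, which is one $(a+\lambda b)uu^\top$-summand, and handle the remaining even-sized antidiagonal piece as before. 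The infinite-eigenvalue block is treated identically with $a-\lambda$ and $1$ replaced by $-1$ and $\lambda$. The singular block contributes only $vw^\top+wv^\top$ terms exactly as in part~4) of the Hermitian proof. Assembling the block decompositions by padding the vectors with zeros in the coordinates belonging to other blocks yields~\eqref{sym-rank1} for $E(\lambda)$, and the parameters satisfy $\ell+2s=r$ and the degree conditions in~(i)--(ii) by construction (each even-sized block and each singular-block pair contributes only $s$-terms, each odd-sized regular block contributes exactly one $\ell$-term).

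The verification of item~(i), $\ell+2s=r$, follows because the rank of the canonical form equals the sum of the block sizes not counting the trivial kernel directions, and each rank-one or rank-two piece in the constructed decomposition is genuinely of that rank with the pieces having linearly independent "left" vectors across distinct summands within each block (one checks this blockwise, just as in the Hermitian case). There is essentially no obstacle here: the only subtle point — and hence the part I would write out most carefully — is confirming that the symmetric complex congruence canonical form really is the transpose-analogue of the list in Theorem~\ref{thm:canformherm} with the sign characteristic and the complex-conjugate-pair blocks removed, so that the same explicit formulas go through verbatim after replacing $*$ by $\top$; once that is in hand the rest is a direct transcription of the Hermitian argument. (Alternatively, one can observe that every symmetric pencil is in particular not amenable to a reduction to the Hermitian case directly — the field is wrong — so the independent, if parallel, treatment via the symmetric canonical form is the natural route.)
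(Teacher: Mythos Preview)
Your proposal is correct and follows essentially the same approach as the paper: reduce to the symmetric canonical form under congruence and decompose each block type by transcribing the Hermitian argument with $*$ replaced by $\top$, noting that the conjugate-pair blocks and sign characteristic disappear so that the $(a+\lambda b)uu^\top$ terms now arise from odd-sized blocks associated with any complex eigenvalue. The paper's proof is in fact just a two-sentence pointer to exactly this strategy.
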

\begin{proof}
The proof is similar to the one of Theorem \ref{rank1-herm.th} using the canonical form
for complex symmetric pencils \cite[Theorem 2.17]{batzke-thesis}. The only difference with the Hermitian case
is that in the symmetric case complex eigenvalues are not necessarily paired up by conjugation,
so terms of the form $(a+\la b)vv^\top$ may come also from odd blocks associated with complex eigenvalues.
\end{proof}
\begin{remark}\rm
The minimal value of $\ell$ is achieved when all eigenvalues of the pencil
\[
D_A+\lambda D_B:=\operatorname{diag}(a_1,\dots,a_\ell)+\lambda\operatorname{diag}(b_1,\dots,b_\ell),
\]
\cm{as in Remark~\ref{rem:concise}}, have algebraic multiplicity {\colb equal to $1$}. If the multiplicity is larger than {\colb $1$} for some eigenvalue
which is given, say, by the $i$th and $j$th diagonal entries $a+\lambda b$ and $c(a+\lambda b)$, with some
$c\in\mathbb C\setminus\{0\}$,
then with a similar trick as in the proof of Theorem~\ref{thm:sigsum} two summands of the form
$(a+\lambda b)u_iu_i^\top+(ca+\lambda cb)u_ju_j^\top$ can be replaced
by two summands of the form $v_kw_k^\top+w_kv_k^\top$ by choosing $v_k=\frac{1}{2}(u_i+\iunit du_j)$ and
$w_k=a(u_i-\iunit du_j)+\lambda b(u_i-\iunit du_j)$, where $d\in\mathbb C$ is a square root of $c$, i.e., $d^2=c$.
On the other hand, each eigenvalue of $E(\lambda)$ with odd algebraic multiplicity must occur
in one of the summands $(a+\lambda b)u_iu_i^\top$. Indeed, similar to Remark~\ref{rem:linind} we
can show that the vectors $u_1,\dots,u_\ell,v_1,\dots,v_s$ are linearly independent and with an
argument similar to the one in the proof of Theorem~\ref{thm:sigsum}, we can show that $E(\lambda)$
is congruent to a pencil of the form
\[
\bmat{&\scriptstyle n-s-\ell &\scriptstyle \ell & \scriptstyle s\cr
\scriptstyle n-s-\ell & 0 & 0 & S_A^\top+\lambda S_B^\top\cr \scriptstyle\ell &0&D_A+\lambda D_B&\ast\cr
\scriptstyle s & S_A+\lambda S_B & \ast & \ast},
\]
which shows that any eigenvalue that is not an eigenvalue of $D_A+\lambda D_B$ must have even
algebraic multiplicity being an eigenvalue of both $S_A+\lambda S_B$ and $S_A^\top+\lambda S_B^\top$.
Thus, we have just shown that the minimal value of $\ell$ is equal to the number of pairwise distinct
eigenvalues of $E(\lambda)$ that have odd algebraic multiplicity.
\end{remark}

We highlight in passing that in the case of complex symmetric matrices and other structures that are based on
the transpose rather than the Hermitian transpose no sign characteristic is involved.
\begin{theorem}\label{rank1-sksym.th} {\rm(Rank-$1$ decomposition for skew-symmetric pencils).}
If $E(\la)$ is a skew-symmetric $n\times n$ matrix pencil with $\rank E=r\leq n$, then $r$ is even and
$E(\la)$ can be written as
\begin{equation}\label{sksym-rank11}
E(\la)=v_1w_1^\top+\cdots+v_{s}w_{s}^\top-w_1v_1^\top-\cdots-w_{s}v_{s}^\top,
\end{equation}
where $s=\frac{r}{2}$, $\deg v_1=\cdots=\deg v_s=0$, and $\deg w_1,\hdots,\deg w_s\leq 1$.
\end{theorem}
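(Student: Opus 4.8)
The plan is to follow the template of the proof of Theorem~\ref{rank1-herm.th} almost verbatim. First I would reduce the claim to the case in which $E(\la)$ is given in the canonical form for complex skew-symmetric pencils under congruence (see \cite{batzke-thesis}, where this form is collected; it goes back to Thompson). The reduction is identical to the Hermitian one, with $P^*$ replaced by $P^\top$: if the canonical form $K_E(\la)$ admits a decomposition $K_E(\la)=\sum_{i=1}^s(\wt v_i\wt w_i^\top-\wt w_i\wt v_i^\top)$ with $\deg\wt v_i=0$ and $\deg\wt w_i\le 1$, then, writing $E(\la)=PK_E(\la)P^\top$ for a nonsingular $P$, the vectors $v_i:=P\wt v_i$ and $w_i:=P\wt w_i$ give a decomposition of the same shape for $E(\la)$, with the degrees and the count $s$ unchanged.

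Next I would record the evenness of $r$ and the absence of ``diagonal'' terms. In the skew-symmetric canonical form the singular blocks have the form $\mat{cc}0&L_k\\-L_k^\top&0\rix$, of size $(2k+1)\times(2k+1)$ but of normal rank $2k$, while the regular blocks (for a finite eigenvalue $\mu$, or for $\infty$) have the form $\mat{cc}0&M(\la)\\-M(\la)^\top&0\rix$ where $M(\la)\in\CC[\la]^{k\times k}$ is, up to a permutation of its rows, a single Jordan block pencil $\mathcal J_k(\mu-\la)$ (resp.\ $\rev\mathcal J_k(-\la)$), hence nonsingular with entries of degree at most $1$; such a block has even size $2k$ and rank $2k$. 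Thus $\rank E=r$ is a sum of even numbers, so $r$ is even. (Equivalently: $E(\la)$ is a skew-symmetric matrix over the field $\CC(\la)$, and such a matrix has even rank.) In particular there is no skew-symmetric pencil of odd rank, and none of rank one, which is precisely why no term $(a+\la b)uu^\top$ can appear in~\eqref{sksym-rank11}.

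It then remains to decompose each canonical block as a sum of $\tfrac12(\text{block rank})$ terms of the form $vw^\top-wv^\top$ with $\deg v=0$ and $\deg w\le 1$. For a regular block $B(\la)=\mat{cc}0&M(\la)\\-M(\la)^\top&0\rix$ of size $2k\times 2k$, set $v_i:=\twoone{e_i}{0}\in\CC^{2k}$ and $w_i(\la):=\twoone{0}{M(\la)^\top e_i}\in\CC[\la]^{2k}$ for $i=1,\dots,k$; since the entries of $M$ have degree at most $1$ we have $\deg w_i\le 1$, while $\deg v_i=0$, and a direct computation gives $\sum_{i=1}^k v_iw_i^\top=\twotwo{0}{M(\la)}{0}{0}$, whence $\sum_{i=1}^k(v_iw_i^\top-w_iv_i^\top)=B(\la)$. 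For a singular block $\mat{cc}0&L_k\\-L_k^\top&0\rix$ the same recipe with $v_i=\twoone{e_i}{0}\in\CC^{2k+1}$ and $w_i=\twoone{0}{L_k^\top e_i}\in\CC[\la]^{2k+1}$, $i=1,\dots,k$, produces $k$ summands of the required type (note $\deg(L_k^\top e_i)\le 1$). Assembling these block decompositions by padding each vector with zeros in the coordinates belonging to the other blocks, exactly as at the end of the proof of Theorem~\ref{rank1-herm.th}, yields a decomposition~\eqref{sksym-rank11} with $s$ equal to the sum of the per-block counts, i.e. $s=r/2$.

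There is no genuine obstacle here beyond transcribing the skew-symmetric canonical form correctly; the only point requiring a little care is to check that, simultaneously for every block type (finite eigenvalue, infinite eigenvalue, singular block), the degree bound $\deg w_i\le 1$ holds and the number of produced summands is exactly half the block's rank, so that the global count adds up to $s=r/2$. Both are immediate from the explicit formulas above.
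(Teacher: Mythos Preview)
Your proof is correct and follows essentially the same strategy as the paper's: reduce to the skew-symmetric canonical form via congruence, observe that every block in that form is a pair (hence $r$ is even), and decompose each canonical block into $\tfrac12(\text{block rank})$ summands of the shape $vw^\top-wv^\top$. Your block-by-block decomposition is in fact slightly more uniform than what a literal transcription of the Hermitian proof would give: rather than imitating the row--column expansion of each Jordan block, you exploit that every canonical block has the shape $\left[\begin{smallmatrix}0&M(\la)\\-M(\la)^\top&0\end{smallmatrix}\right]$ and write it directly as $\sum_i v_iw_i^\top-w_iv_i^\top$ with $v_i=\left[\begin{smallmatrix}e_i\\0\end{smallmatrix}\right]$, $w_i=\left[\begin{smallmatrix}0\\M^\top e_i\end{smallmatrix}\right]$; this is a harmless and arguably cleaner variant of the paper's argument.
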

\begin{proof}
The proof follows the same steps as the proof of Theorem \ref{rank1-herm.th}. All blocks in the
skew-symmetric canonical form are paired up (see \cite[Theorem 2.18]{batzke-thesis}). More precisely,
the blocks in this canonical form are of three different kinds, namely:
(a) pairs of $k\times k$ blocks associated with the eigenvalue $\infty$,
(b) pairs of $k\times k$ blocks associated with a complex eigenvalue,
and (c) pairs of a $k\times (k+1)$ right singular and a $(k+1)\times k$ left singular block.
Then, following the proof of Theorem \ref{rank1-herm.th}, we can decompose any of these blocks
as a sum of rank-$1$ pencils as in \eqref{sksym-rank11}.
\end{proof}
\begin{theorem}\label{rank1-teven.th} {\rm(Rank-$1$ decomposition for $\top$-even pencils).}
If $E(\la)$ is a $\top$-even $n\times n$ matrix pencil with $\rank E=r\leq n$, then it can be written as
\begin{equation}\label{teven-rank1}
E(\la)=\left\{\begin{array}{ll}
v_1w_1(\la)^\top+\cdots+v_{s}w_{s}(\la)^\top+ w_1(-\la)v_1^\top+\cdots
+ w_{s}(-\la)v_{s}^\top,&\mbox{if $r$ is even,}\\
uu^\top+v_1w_1(\la)^\top+\cdots+v_{s}w_{s}(\la)^\top+ w_1(-\la)v_1^\top+\cdots
+ w_{s}(-\la)v_{s}^\top,&\mbox{if $r$ is odd,}
\end{array}\right.
\end{equation}
where $s={\lfloor r/2\rfloor}$, $\deg u=\deg v_1=\cdots=\deg v_s=0$ and $\deg w_1,\hdots,\deg w_s\leq 1$.
\end{theorem}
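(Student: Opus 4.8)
The plan is to mirror the strategy used for Theorem~\ref{rank1-herm.th}: reduce to the case where $E(\la)$ is given in the $\top$-even canonical form under congruence (from \cite[Theorem 2.19 or similar]{batzke-thesis}), and then produce an explicit rank-$1$ decomposition of the shape~\eqref{teven-rank1} for each indecomposable block. The reduction step is the same as in the Hermitian case: if $K_E(\la)$ is the canonical form and we can write $K_E(\la)=\widetilde u\widetilde u^\top+\sum_j \widetilde v_j\widetilde w_j(\la)^\top+\sum_j \widetilde w_j(-\la)\widetilde v_j^\top$ (dropping the $uu^\top$ term if $r$ is even), then $E(\la)=PK_E(\la)P^\top$ for some nonsingular $P$, and setting $u=P\widetilde u$, $v_j=P\widetilde v_j$, $w_j(\la)=Pw_j(\la)$ transports the decomposition to $E(\la)$; here one uses Remark~\ref{reversal.rem}/the identity $P w_j(-\la)= (Pw_j)(-\la)$ since $P$ is constant, together with the fact that conjugation of a $\top$-even pencil by a constant matrix preserves the structure. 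So it suffices to handle canonical blocks.

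Next I would recall the list of indecomposable $\top$-even blocks. For complex $\top$-even pencils these come in: (a) blocks $R J_k(a-\la)$ (up to the appropriate even/odd sign conventions) associated with a \emph{nonzero} finite eigenvalue $a$, which actually appear paired as $R\,\mathrm{diag}(J_k(a-\la),J_k(-a-\la))$ because the structure forces the spectrum to be symmetric about $0$; (b) blocks associated with the eigenvalue $0$, which are single blocks of the form $\sigma R J_k(-\la)$ with a sign $\sigma$, but with a parity constraint linking the sign to the parity of $k$ — this is precisely the place where odd size produces the leftover $uu^\top$ term; (c) blocks associated with $\infty$, again paired or sign-carrying according to parity; and (d) paired singular blocks $\left[\begin{smallmatrix}0 & L_k^\top\\ L_k & 0\end{smallmatrix}\right]$ with the reversal twist appropriate to the even structure. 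For each of the paired types (a), (c), (d), the decomposition is obtained exactly as in cases 3) and 4) of the proof of Theorem~\ref{rank1-herm.th}, writing the off-diagonal coupling as $\sum v_j w_j(\la)^\top + \sum w_j(-\la) v_j^\top$; one just has to check that the $\la\mapsto-\la$ twist built into the $\top$-even structure is compatible with the pairing, which it is by construction. This accounts for all the $v_jw_j^\top$ terms, and $s$ comes out to be $\lfloor r/2\rfloor$.

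The one genuinely new point — and the step I expect to be the main obstacle — is the single odd-sized block at the eigenvalue $0$ (and symmetrically at $\infty$). For a single block $\sigma R J_k(-\la)$ of \emph{odd} size $k=2t+1$, the $\top$-even structure forces it to contribute an honest term $uu^\top$ that cannot be absorbed into a $v w^\top + w(-\la) v^\top$ pair, because such a pair always has even rank and the "signsum''-type obstruction (analogous to Theorem~\ref{thm:sigsum}) is nonzero. Concretely, I would split the odd block as in the odd-$k$ case of step 1) in the proof of Theorem~\ref{rank1-herm.th}: peel off the central $1\times 1$ piece, which here is a scalar multiple of $\la$ (or of $1$, in the $\infty$ case) times $e_{t+1}e_{t+1}^\top$ — this is the $uu^\top$ summand with $u$ a suitable scalar multiple of $e_{t+1}$ (deg $u=0$ as required) — and then the remaining $2t\times 2t$ antidiagonal band decomposes into $t$ pairs $v_j w_j(\la)^\top + w_j(-\la)v_j^\top$ with $v_j$ constant and $\deg w_j\le 1$, again copying the Hermitian computation with $a-\la$ replaced by $\la$ and $1$ by the appropriate constant, and respecting the sign $\sigma$ (which may be absorbed into $v_j$ or $w_j$). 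For \emph{even}-sized blocks at $0$ and $\infty$ there is no central scalar piece and the whole block decomposes into $v_jw_j^\top$-pairs, so no $uu^\top$ term arises. Finally, forming the direct sum over all canonical blocks by zero-padding each vector on the coordinates of the other blocks, we obtain a decomposition of the global $E(\la)$ of exactly the form~\eqref{teven-rank1}: there is at most one $uu^\top$ term (it appears iff $r$ is odd, since each such term contributes $1$ to the rank while each pair contributes $2$, and the parity of $r$ is determined by whether an odd-sized block at $0$ or $\infty$ occurs), with $\ell+2s=r$ forcing $s=\lfloor r/2\rfloor$ and the degree constraints holding by construction. The bookkeeping that at most one $uu^\top$ survives — i.e. that if two odd blocks at $0$ (or at $0$ and at $\infty$, etc.) occur they can be merged into a single $uu^\top$ plus pairs — is handled by the same "pair-up opposite signs'' trick as in the ``$\le$'' direction of Theorem~\ref{thm:sigsum}: $v_1 v_1^\top \la - v_2 v_2^\top \la$ (same eigenvalue, opposite contributions) can be rewritten as a single $v w(-\la)^\top$-pair, leaving a residue $uu^\top$ only when the count of such odd blocks is itself odd, equivalently when $r$ is odd.
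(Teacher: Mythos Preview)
Your overall strategy---reduce to canonical form, decompose each indecomposable block, then clean up leftover rank-$1$ terms---is exactly the route the paper takes. However, several of the structural details you rely on are incorrect, and these are not cosmetic: they are precisely where the argument lives.

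First, you have the $\top$-even canonical form backwards at the critical eigenvalues. For $\top$-even pencils, odd-sized blocks at $0$ occur in \emph{pairs}, while even-sized blocks at $0$ are single; conversely, odd-sized blocks at $\infty$ are single and even-sized blocks at $\infty$ are paired (this is the list (a)--(f) in the paper's proof, citing \cite[Theorem~2.16]{batzke-thesis}). So the ``single odd-sized block at the eigenvalue $0$'' that you build your argument around does not exist. The leftover $uu^\top$ terms arise \emph{only} from the single odd-sized blocks at $\infty$ (type~(a)). This matters because the central entry of such a block is the constant $1$, giving a genuine degree-$0$ term $uu^\top$; by contrast, the central entry of an odd block at $0$ would be $\pm\la$, and $\la uu^\top$ is \emph{not} $\top$-even (its leading coefficient is symmetric, not skew-symmetric), so it cannot appear as the extra summand in~\eqref{teven-rank1}.

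Second, you invoke a ``signsum-type obstruction'' and pair up ``opposite contributions'' via $\la v_1v_1^\top-\la v_2v_2^\top$. But complex $\top$-structures carry no sign characteristic; there are no signs to cancel. The paper's reduction step is instead the purely algebraic identity
\[
u_1u_1^\top+u_2u_2^\top=vw^\top+wv^\top,\qquad v=u_1+\iunit u_2,\quad w=\tfrac{1}{2}(u_1-\iunit u_2),
\]
which collapses \emph{any} two constant $uu^\top$ terms (same sign) into a single pair of the required shape. After this reduction one is left with at most one $uu^\top$, and the parity of $r$ (each odd $\infty$-block contributes odd rank, every other block even rank) decides whether it survives.
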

\begin{proof} We proceed in a similar way as in the proof of Theorem \ref{rank1-herm.th} using the
canonical form for $\top$-even pencils \cite[Theorem 2.16]{batzke-thesis}. Again, we may assume the
$\top$-even pencil $L(\la)$ is given in canonical form. Then, it is a direct sum of blocks of six kinds,
namely: (a) $(2k+1)\times(2k+1)$ blocks associated with the eigenvalue $\infty$,
(b) pairs of $(2\ell)\times(2\ell)$ blocks associated with the eigenvalue $\infty$,
(c) pairs of $(2m+1)\times(2m+1)$ blocks associated with the eigenvalue $0$, (d) $(2p)\times(2p)$ blocks
associated with the eigenvalue $0$, (e) pairs of $q\times q$ blocks corresponding to a pair of eigenvalues
$\mu,-\mu\in\CC\setminus\{0\}$, and (f) pairs of a right and a left singular block of size $(r+1)\times r$
and $r\times (r+1)$, respectively. Blocks of type (d) can be written as a sum of two rank-$1$ pencils of
the form $vw^\top+wv^\top$ using the same decomposition as in the proof of Theorem \ref{rank1-herm.th}.
Similarly, paired blocks of types (b)--(c) and (e)--(f) can be written as a sum of paired rank-$1$
pencils $vw^\top+wv^\top$ using a combined row-column expansion. For instance, a pair of blocks of type (e)
has the form
\[
\left[\begin{array}{cccc|cccc}
&&&&&&&\mu+\la\\
&&&&&&\iddots&1\\
&&&&&\mu+\la&\iddots&\\
&&&&\mu+\la&1&&\\\hline
&&&\mu-\la&&&&\\
&&\mu-\la&1&&&&\\
&\iddots&\iddots&&&&&\\
\mu-\la&1&&&&&&
\end{array}\right]_{(2q)\times(2 q)}
\]
and can be decomposed into a sum $v_1w_1(\la)^\top+\cdots+v_qw_q(\la)^\top+w_1(-\la)v_1^\top+
\cdots+w_q(-\la)v_q^\top$ of $2q$ rank-$1$ pencils
with $v_i=e_{2q-i+1}$, for $i=1,\hdots,q$, and $w_i(\la)$ being, up to the sign, the $(2q-i+1)$th column of the whole matrix pencil,
namely $w_i(\la)=\left[\begin{array}{cccc}0_{i-1}&\mu{\colb-}\la&1&0_{2q-i-1}\end{array}\right]^\top$
for $i=1,\hdots,q-1$, and $w_q(\la)=\left[\begin{array}{cccc}0_{q-1}&\mu{\colb-}\la&0_{q}\end{array}\right]^\top$.
Blocks of type (a), however, will need one extra term of the form $uu^\top$. To be more precise,
the $(2k+1)\times(2k+1)$ block associated with $\infty$ having the form
\[
\left[
\begin{array}{ccccccc}
&&&&&&1\\&&&&&\iddots&\la\\
&&&&1&\iddots&\\
&&&1&\la&&\\
&&\iddots&-\la&&\\
&1&\iddots&&&&\\
1&-\la&&&&&
\end{array}
\right]_{(2k+1)\times(2k+1)}
\]
can be decomposed as
$uu^\top+v_1w_1(\la)^\top+\cdots+v_{k}w_k(\la)^\top+w_1(-\la)v_1^\top+\cdots+w_k(-\la)v_k^\top$,
where $u=e_k,v_i=e_{2k-i+2}$ for $i=1,\hdots,k$, and where \vm{for $i=1,\hdots,k$, $w_i(\la)^\top$
is the $(2k-i+2)$th} row of the matrix pencil, namely
$w_i(\la)=\left[\begin{array}{cccc}0_{1\times(i-1)}&1&-\la&0_{1\times(2k-i)}\end{array}\right]^\top$.

The previous arguments show that $E(\la)$ can be written as
\begin{equation}\label{teven-step}
\begin{array}{ccl}
E(\la)&=&u_1u_1^\top+\cdots+u_\ell u_\ell^\top+v_1w_1(\la)^\top+\cdots+v_{s}w_s(\la)^\top\\
&&+w_1(-\la)v_1^\top+\cdots+w_s(-\la)v_s^\top,
\end{array}
\end{equation}
with $\ell+2s=r$, and $\deg u_1=\hdots=\deg u_\ell=\deg v_1=\hdots=\deg v_s=0$.
It remains to prove that, given two vectors $u_1,u_2\in\CC^n$, there exist another two vectors $v,w$,
with $\deg v=0$, such that
\begin{equation}\label{rank1-rank2}
u_1u_1^\top+u_2u_2^\top=vw^\top+wv^\top.
\end{equation}
Note that, if this is true, then we can group an even number of summands of the form $uu^\top$ in
\eqref{teven-step} to get a decomposition like in \eqref{teven-rank1}.

To get the expression \eqref{rank1-rank2}, just set $v=u_1+\iunit u_2$ and $w=\frac{1}{2}(u_1-\iunit u_2)$.
\end{proof}
\begin{theorem}\label{rank1-todd.th} {\rm(Rank-$1$ decomposition for $\top$-odd pencils).}
If $E(\la)$ is a $\top$-odd $n\times n$ matrix pencil with $\rank E=r\leq n$, then it can be written as
\begin{equation}\label{todd-rank1}
E(\la)=\left\{\begin{array}{ll}
v_1w_1(\la)^\top+\cdots+v_{s}w_{s}(\la)^\top- w_1(-\la)v_1^\top-\cdots
- w_{s}(-\la)v_{s}^\top,&\mbox{if $r$ is even,}\\
\lambda uu^\top+v_1w_1(\la)^\top+\cdots+v_{s}w_{s}(\la)^\top- w_1(-\la)v_1^\top-\cdots
- w_{s}(-\la)v_{s}^\top,&\mbox{if $r$ is odd,}
\end{array}\right.
\end{equation}
where $s={\lfloor r/2\rfloor}$, $\deg u=\deg v_1=\cdots=\deg v_s=0$ and $\deg w_1,\hdots,\deg w_s\leq 1$.
\end{theorem}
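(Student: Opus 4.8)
The plan is to reduce the statement to the $\top$-even case already handled in Theorem~\ref{rank1-teven.th} by exploiting the reversal operation. The key observation is that if $E(\la)=A+\la B$ is $\top$-odd, i.e.\ $A^\top=-A$ and $B^\top=B$, then $\rev E(\la)=B+\la A$ has a symmetric constant coefficient and a skew-symmetric $\la$-coefficient, so $\rev E(\la)$ is $\top$-even. Moreover, reversal does not alter the normal rank, so $\rank(\rev E)=r$ as well. Hence Theorem~\ref{rank1-teven.th} applies to $\rev E(\la)$ and provides a decomposition of $\rev E(\la)$ of the form~\eqref{teven-rank1}, with $s=\lfloor r/2\rfloor$, $\deg u=\deg v_i=0$ and $\deg w_i\le 1$.

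Next I would apply $\rev$ to both sides of that decomposition. Since $\rev$ is an involution, $\rev(\rev E)=E$, so the left-hand side becomes $E(\la)$. For the right-hand side I would use Remark~\ref{reversal.rem} together with the trivial fact that transposition commutes with $\rev$ (because $\rev$ acts only on the variable $\la$): this gives $\rev\big(v_iw_i(\la)^\top\big)=v_i(\rev w_i)(\la)^\top$ and $\rev\big(w_i(-\la)v_i^\top\big)=-(\rev w_i)(-\la)v_i^\top$, the extra minus sign arising from the interaction of the substitution $\la\mapsto-\la$ with the swap of coefficients performed by $\rev$. Consequently each paired summand $v_iw_i(\la)^\top+w_i(-\la)v_i^\top$ reverses to $v_i(\rev w_i)(\la)^\top-(\rev w_i)(-\la)v_i^\top$, which is exactly the form required in~\eqref{todd-rank1} once we rename $\rev w_i$ as the new $w_i$ (still of degree at most $1$, with $v_i$ constant). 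In the odd-rank case the remaining constant summand reverses to $\rev(uu^\top)=\la uu^\top$, which is precisely the extra term in~\eqref{todd-rank1}. Summing over $i$ and keeping track of the count $s=\lfloor r/2\rfloor$ yields the claimed decomposition.

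The one place that genuinely needs care---and the step I would write out explicitly---is the sign bookkeeping in the second paragraph: one must verify that reversing the $\top$-even pattern $v_iw_i(\la)^\top+w_i(-\la)v_i^\top$ (with a plus sign) really produces the $\top$-odd pattern $v_i\widetilde w_i(\la)^\top-\widetilde w_i(-\la)v_i^\top$ (with a minus sign), and that the degree-$0$ term $uu^\top$ turns into $\la uu^\top$ rather than into something incompatible with the $\top$-odd structure. A short computation---writing $w_i(\la)=p_i+\la q_i$ and comparing the constant and $\la$-coefficients on the two sides---confirms both points. Alternatively, one could bypass the reversal altogether and mimic the proof of Theorem~\ref{rank1-herm.th} directly on the canonical form of $\top$-odd pencils (see \cite{batzke-thesis}), which is just the reversal of the $\top$-even canonical form; in that approach the only block needing the additional $\la uu^\top$ term is the odd-sized block associated with the eigenvalue $0$, which plays the role that the odd-sized infinite block played for $\top$-even pencils.
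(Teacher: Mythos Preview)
Your proposal is correct and follows essentially the same route as the paper: apply Theorem~\ref{rank1-teven.th} to $\rev E(\la)$ and then reverse back using Remark~\ref{reversal.rem}. The paper's proof is just the one-line statement of this idea, whereas you have spelled out the sign bookkeeping (in particular the identity $\rev\big(w_i(-\la)v_i^\top\big)=-(\rev w_i)(-\la)v_i^\top$ and $\rev(uu^\top)=\la uu^\top$) that the paper leaves implicit.
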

\begin{proof}
The result follows from Theorem~\ref{rank1-teven.th} applied to the reversal of $E(\la)$ and using
Remark~\ref{reversal.rem}.
\end{proof}

The following decomposition for low-rank $\top$-palindromic pencils has been presented in the recent
reference \cite[Th. 3.1]{d18}. For completeness, we provide a different proof based on Theorem~\ref{rank1-herm.th}.
\begin{theorem}\label{rank1-tpal.th} {\rm(Rank-$1$ decomposition for $\top$-palindromic pencils).}
If $E(\la)$ is a $\top$-palindromic $n\times n$ matrix pencil with $\rank E=r\leq n$, then it can be written as
\begin{equation}\label{tpal-rank1}
E(\la)=\left\{\begin{array}{lc}
v_1w_1^\top+\cdots+v_{s}w_{s}^\top+ (\rev w_1)v_1^\top+\cdots+ (\rev w_{s})v_{s}^\top,&\mbox{if $r$ is even,}\\
(1+\la)uu^\top+v_1w_1^\top+\cdots+v_{s}w_{s}^\top+(\rev w_1)v_1^\top+\cdots+
(\rev w_{s})v_{s}^\top,&\mbox{if $r$ is odd,}
\end{array}\right.
\end{equation}
where $s={\lfloor r/2\rfloor}$, $\deg u=\deg v_1=\cdots=\deg v_s=0$ and $\deg w_1,\hdots,\deg w_s\leq 1$.
\end{theorem}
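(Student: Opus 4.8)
The plan is to reduce the $\top$-palindromic case to the $\top$-even case (Theorem~\ref{rank1-teven.th}) by means of a Cayley transformation. Write $E(\la)=A+\la A^\top$, so that $\rev E=E^\top$, and introduce the pencil
\[
\wh E(\mu):=(A+A^\top)+\mu(A^\top-A),
\]
whose constant coefficient is symmetric and whose $\mu$-coefficient is skew-symmetric; hence $\wh E$ is $\top$-even. One checks directly that $\wh E(\mu)=(1-\mu)\,E\!\left(\tfrac{1+\mu}{1-\mu}\right)$ for $\mu\neq 1$ and, conversely, $E(\la)=\tfrac{1+\la}{2}\,\wh E\!\left(\tfrac{\la-1}{\la+1}\right)$. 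Since multiplying a pencil by a nonzero scalar and applying the Möbius substitution $\mu\mapsto\tfrac{1+\mu}{1-\mu}$ to the variable do not change the normal rank, we obtain $\rank\wh E=\rank E=r$.

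I would then apply Theorem~\ref{rank1-teven.th} to $\wh E$, getting
\[
\wh E(\mu)=\bigl[\,uu^\top\,\bigr]+\sum_{i=1}^{s}\left(v_iw_i(\mu)^\top+w_i(-\mu)v_i^\top\right),
\]
with $s=\lfloor r/2\rfloor$, $\deg u=\deg v_i=0$, $\deg w_i\le 1$, where the term $uu^\top$ occurs exactly when $r$ is odd. Substituting $\mu=\tfrac{\la-1}{\la+1}$ and multiplying by $\tfrac{1+\la}{2}$ turns the term $uu^\top$ into $\tfrac{1+\la}{2}uu^\top=(1+\la)\bigl(\tfrac{1}{\sqrt 2}u\bigr)\bigl(\tfrac{1}{\sqrt 2}u\bigr)^\top$, which after relabeling $u$ is precisely the extra summand in \eqref{tpal-rank1}. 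For the paired summands, write $w_i(\mu)=w_{i,A}+\mu w_{i,B}$ and set $\wt w_i(\la):=\tfrac{1+\la}{2}w_{i,A}+\tfrac{\la-1}{2}w_{i,B}$, a column pencil of degree $\le 1$. A short computation gives $\tfrac{1+\la}{2}\,w_i\!\left(\tfrac{\la-1}{\la+1}\right)=\wt w_i(\la)$ and, crucially, $\tfrac{1+\la}{2}\,w_i\!\left(-\tfrac{\la-1}{\la+1}\right)=\rev\wt w_i(\la)$: indeed both sides equal $\tfrac{1}{2}(w_{i,A}+w_{i,B})+\tfrac{\la}{2}(w_{i,A}-w_{i,B})$, using that reversing $p+\la q$ produces $q+\la p$ (cf.\ Remark~\ref{reversal.rem}). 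Hence each paired summand $v_iw_i(\mu)^\top+w_i(-\mu)v_i^\top$ pulls back to $v_i\wt w_i(\la)^\top+\bigl(\rev\wt w_i(\la)\bigr)v_i^\top$, and assembling all contributions yields exactly \eqref{tpal-rank1}, with $s=\lfloor r/2\rfloor$ inherited from Theorem~\ref{rank1-teven.th}.

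The one genuinely delicate point is the identity $\tfrac{1+\la}{2}\,w_i\!\left(-\tfrac{\la-1}{\la+1}\right)=\rev\wt w_i(\la)$: it is exactly the mechanism that converts the $w_i(-\mu)$ pattern of the $\top$-even decomposition into the $\rev$ pattern required here, in complete analogy with the way Theorem~\ref{rank1-todd.th} is obtained from Theorem~\ref{rank1-teven.th}. Everything else is bookkeeping (checking that $\wh E$ is $\top$-even, that the Cayley transformation preserves normal rank and is invertible, and that no denominators survive, so the $\wt w_i$ stay of degree $\le 1$). An alternative, fully self-contained route would imitate the proof of Theorem~\ref{rank1-herm.th} directly on the canonical form of $\top$-palindromic pencils under congruence, decomposing each canonical block — those associated with pairs $\{\mu,\mu^{-1}\}$, with $\mu=+1$, with $\mu=-1$, and the pairs of singular blocks — into rank-$1$ $\top$-palindromic pencils; there the odd-sized canonical block at the eigenvalue $-1$ is the one responsible for the extra $(1+\la)uu^\top$ term.
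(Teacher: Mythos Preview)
Your proof is correct and follows essentially the same approach as the paper: you apply the Cayley transformation $\mathcal C_{+1}$ (which you write out explicitly as $\wh E(\mu)=(1-\mu)E\bigl(\tfrac{1+\mu}{1-\mu}\bigr)$) to reduce to the $\top$-even case, and then invert via $\mathcal C_{-1}$; your key identity $\tfrac{1+\la}{2}\,w_i\!\bigl(-\tfrac{\la-1}{\la+1}\bigr)=\rev\wt w_i(\la)$ is exactly the paper's equation~\eqref{reversal-w}, verified by direct computation rather than formal substitution. The only cosmetic difference is that the paper absorbs the factor $\tfrac12$ from $\mathcal C_{-1}\circ\mathcal C_{+1}=2\,\mathrm{id}$ at the very end rather than rescaling $u$ mid-proof.
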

\begin{proof} The result follows from Theorem~\ref{rank1-herm.th} using Cayley transformations. More precisely,
let ${\cal C}_{-1}$ and ${\cal C}_{+1}$ be the Cayley transformations of a given matrix pencil $P(\la)$
defined as
\begin{equation}\label{cayley}
{\cal C}_{-1}(P)(\la)=(1+\la)P\left(\frac{\la-1}{1+\la}\right)\quad\mbox{and}\quad
{\cal C}_{+1}(P)(\la)=(1-\la)P\left(\frac{1+\la}{1-\la}\right).
\end{equation}
It is known that, if $E(\la)$ is $\top$-palindromic, then ${\cal C}_{+1}(E)$ is $\top$-even
\cite[Theorem 2.7]{4m-good}. It is clear, by definition, that both ${\cal C}_{-1}$ and ${\cal C}_{+1}$
preserve the rank. Then ${\cal C}_{+1}(E)$ is $\top$-even with $\rank {\cal C}_{+1}(E)=r$,
so it admits a decomposition like \eqref{teven-rank1}. We will focus on the case when $r$ is odd, because
the case when $r$ is even is analogous. Using that
${\cal C}_{-1}({\cal C}_{+1}(P))(\la)=2P(\la)$ for any matrix pencil $P(\la)$,
see \cite[Proposition 2.5]{4m-good}, it follows that
\[
\begin{array}{ccl}
2E(\la)&=&{\cal C}_{-1}\left(uu^\top+\sum_{j=1}^s \big(v_jw_j(\la)^\top+w_j(-\la)v_j^\top\big)\right)\\
&=&(1+\la) uu^\top+\sum_{j=1}^sv_j\left((1+\la)w_j\left(\frac{\la-1}{1+\la}\right)\right)^\top
+\sum_{j=1}^s\left((1+\la)w_j\left(\frac{1-\la}{1+\la}\right)\right)v_j^\top,
\end{array}
\]
where $s=(r-1)/2$. Now, the result follows from the identity
\begin{equation}\label{reversal-w}
\rev\left((1+\la)w\left(\frac{\la-1}{1+\la}\right)\right)=\la\left(1+\frac{1}{\la}\right)w
\left(\frac{\frac{1}{\la}-1}{1+\frac{1}{\la}}\right)=(1+\la)w\left(\frac{1-\la}{1+\la}\right).
\end{equation}
%
\end{proof}

Using again appropriate Cayley transformations and the decomposition for $\top$-even matrix pencils in
Theorem~\ref{rank1-teven.th} we can also get a rank-$1$ decomposition for $\star$-anti-palindromic pencils.
\begin{theorem}\label{rank1-tantipal.th} {\rm(Rank-$1$ decomposition for $\top$-anti-palindromic pencils).}
If $E(\la)$ is a $\top$-anti-palindromic $n\times n$ matrix pencil with $\rank E=r\leq n$, then it can be
written as
\begin{equation}\label{tantipal-rank1}
\begin{array}{cc}
E(\la)=&\left\{\begin{array}{cl}
v_1w_1^\top+\cdots+v_{s}w_{s}^\top- (\rev w_1)v_1^\top-\cdots- (\rev w_{s})v_{s}^\top,&\mbox{if $r$ is even,}\\
(1-\la)uu^\top+v_1w_1^\top+\cdots+v_{s}w_{s}^\top-(\rev w_1)v_1^\top-\cdots- (\rev w_{s})v_{s}^\top,&\mbox{if $r$ is odd,}
\end{array}\right.
\end{array}
\end{equation}
where $s=\lfloor r/2\rfloor=0$, $\deg v_1=\cdots=\deg v_s=0$, and $\deg w_1,\hdots,\deg w_s\leq 1$.
\end{theorem}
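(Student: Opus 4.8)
The plan is to mimic the proof of Theorem~\ref{rank1-tpal.th}, replacing the $\top$-palindromic structure with the $\top$-anti-palindromic one and the $\top$-even structure with the $\top$-odd one. First I would recall from the theory of Cayley transformations that if $E(\la)$ is $\top$-anti-palindromic, then ${\cal C}_{+1}(E)$ is $\top$-odd; this is the anti-palindromic analogue of \cite[Theorem 2.7]{4m-good} and is easily checked from the definition \eqref{cayley}, since $(\rev E)(\la)=\la E(1/\la)$ and anti-palindromicity means $E(\la)^\top=-\rev E(\la)$. Since the Cayley transformations preserve rank, ${\cal C}_{+1}(E)$ is a $\top$-odd pencil of rank $r$, hence by Theorem~\ref{rank1-todd.th} it admits a decomposition of the form \eqref{todd-rank1}, i.e.\ (focusing on the case $r$ odd, the case $r$ even being analogous)
\[
{\cal C}_{+1}(E)(\la)=\la\, uu^\top+\sum_{j=1}^s\big(v_jw_j(\la)^\top-w_j(-\la)v_j^\top\big),
\]
with $s=(r-1)/2$, $\deg u=\deg v_j=0$, and $\deg w_j\le 1$.

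Next I would apply ${\cal C}_{-1}$ to both sides and use the identity ${\cal C}_{-1}({\cal C}_{+1}(P))(\la)=2P(\la)$ from \cite[Proposition 2.5]{4m-good}. Computing ${\cal C}_{-1}$ termwise: the term $\la\, uu^\top$ becomes $(1+\la)\cdot\frac{\la-1}{1+\la}\,uu^\top=(\la-1)uu^\top=-(1-\la)uu^\top$, which (after dividing by $2$ and absorbing the sign into $u$ by scaling) produces the term $(1-\la)uu^\top$ in \eqref{tantipal-rank1}. Each term $v_jw_j(\la)^\top$ becomes $v_j\big((1+\la)w_j(\tfrac{\la-1}{1+\la})\big)^\top$, and I would set $\wt w_j(\la):=(1+\la)w_j(\tfrac{\la-1}{1+\la})$, which is again a pencil (degree at most $1$) in the entries. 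The term $-w_j(-\la)v_j^\top$ becomes $-\big((1+\la)w_j(\tfrac{1-\la}{1+\la})\big)v_j^\top$, and here I would invoke exactly the identity \eqref{reversal-w}, namely $\rev\big((1+\la)w(\tfrac{\la-1}{1+\la})\big)=(1+\la)w(\tfrac{1-\la}{1+\la})$, to rewrite this as $-(\rev \wt w_j)v_j^\top$. Dividing through by $2$ and rescaling the vectors to remove the factor $\tfrac12$ gives precisely \eqref{tantipal-rank1} with $r$ odd; the even case drops the $uu^\top$ term and is identical otherwise.

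I would then add a short sentence handling the typographical degenerate reading in the statement: the condition $s=\lfloor r/2\rfloor$ is the real constraint, and the degree conditions on $v_j,w_j$ are inherited from the $\top$-odd decomposition via the Cayley substitution. The main obstacle — though a minor one — is verifying cleanly that ${\cal C}_{+1}$ sends $\top$-anti-palindromic pencils to $\top$-odd pencils and keeping track of signs through the two Cayley transformations, since an errant sign in the $uu^\top$ term would change $(1-\la)$ to $(\la-1)$; but these are absorbable into the vector $u$, so no genuine difficulty arises. Everything else is a direct transcription of the proof of Theorem~\ref{rank1-tpal.th}, so I would simply write ``The result follows from Theorem~\ref{rank1-todd.th} applied to ${\cal C}_{+1}(E)$ and using the identities \eqref{reversal-w} and ${\cal C}_{-1}\circ{\cal C}_{+1}=2\,\mathrm{id}$, exactly as in the proof of Theorem~\ref{rank1-tpal.th}; the only change is the sign pattern inherited from the $\top$-odd decomposition \eqref{todd-rank1}.''
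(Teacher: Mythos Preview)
Your proof is correct and uses the same Cayley-transformation strategy as the paper, with one minor variation: the paper applies ${\cal C}_{-1}$ to $E$ first (obtaining a $\top$-even pencil, decomposed via Theorem~\ref{rank1-teven.th}) and then applies ${\cal C}_{+1}$ to return to $2E$, whereas you apply ${\cal C}_{+1}$ first (obtaining a $\top$-odd pencil, decomposed via Theorem~\ref{rank1-todd.th}) and then ${\cal C}_{-1}$. Both routes are equally valid; the only difference is which reversal identity one invokes---the paper uses a ${\cal C}_{+1}$-analogue of~\eqref{reversal-w}, while you can reuse~\eqref{reversal-w} verbatim. Your remark about absorbing the sign in $-(1-\la)uu^\top$ into $u$ is fine over $\CC$ (replace $u$ by $\iunit u$).
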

\begin{proof}
The proof is similar to the one of Theorem \ref{rank1-tpal.th}, but first considering ${\cal C}_{-1}(E)$,
which is $\top$-even \cite[Th. 2.7]{4m-good}, and then applying ${\cal C}_{+1}$ to get
${\cal C}_{+1}({\cal C}_{-1}(E))=2E$. The differences between~\eqref{tantipal-rank1} and~\eqref{tpal-rank1}
come from the identities
\[
\begin{array}{ll}
{\cal C}_{+1}(uu^\top)=(1-\la)uu^\top,\\
{\cal C}_{+1}(vw(\la)^\top)=v\left((1-\la)w\left(\frac{1+\la}{1-\la}\right)\right)^\top,&
{\cal C}_{+1}(w(-\la)v^\top)=(1-\la)w\left(\frac{1+\la}{\la-1}\right)v^\top,
\end{array}
\]
and
\[
\rev\left((1-\la)w\left(\frac{1+\la}{1-\la}\right)\right)=\la\left(1-\frac{1}{\la}\right)w
\left(\frac{1+\frac{1}{\la}}{1-\frac{1}{\la}}\right)=-(1-\la)w\left(\frac{1+\la}{\la-1}\right).
\]
\end{proof}

We highlight that the parameter $\ell$ in the decomposition $r=\ell+2s$ takes the minimal value
zero or one in the decompositions in Theorem~\ref{rank1-sksym.th}--\ref{rank1-tantipal.th}. This is
in contrast with Theorem~\ref{rank1-herm.th} and Theorem~\ref{rank1-sym.th}, where the minimal
value for $\ell$ can be as large as $r$, for example if the pencil $E(\lambda)$ does only have
simple eigenvalues in the symmetric case, or only simple real eigenvalues in the Hermitian case.

The rank-$1$ decompositions for skew-Hermitian, $*$-even, and $*$-odd pencils can be directly obtained
from the decomposition in the Hermitian case, by means of the following observation
(see \cite[page 80]{batzke-thesis}):
\begin{itemize}
\item If $A+\la B$ is skew-Hermitian then $\iunit (A+\la B)$ is Hermitian.
\item If $A+\la B$ is $*$-even then $A+\la (\iunit B)$ is Hermitian.
\item $A+\la B$ is $*$-odd if and only if $B+\la A$ is $*$-even.
\end{itemize}
For completeness, we explicitly state these decompositions in a similar way as we have done for
the previous structures.
\begin{theorem}\label{rank1-sk.th} {\rm(Rank-$1$ decomposition for skew-Hermitian pencils).}
If $E(\la)$ is a skew-Hermitian $n\times n$ matrix pencil with $\rank E=r\leq n$, then it can be written as
\begin{equation}\label{sk-rank1}
E(\la)=\iunit(a_1+\la b_1)u_1u_1^*+\cdots+\iunit(a_\ell+\la b_\ell)u_\ell u_\ell^*+v_1w_1^*+\cdots
+v_sw_s^*-w_1v_1^*-\cdots-w_sv_s^*,
\end{equation}
where $a_i,b_i\in\RR$, for $i=1,\hdots,\ell$, and
\begin{itemize}
\item[(i)] $\ell+2s=r$,
\item[(ii)] $\deg u_1=\cdots=\deg u_\ell=0=\deg v_1=\cdots=\deg v_s$ and $\deg w_1,\hdots,\deg w_s\leq 1$.
\end{itemize}
\end{theorem}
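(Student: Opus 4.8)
The plan is to reduce the skew-Hermitian case directly to the already-proved Hermitian case via the elementary observation recalled just before the statement: if $E(\la)$ is skew-Hermitian, then $\iunit E(\la)$ is Hermitian. First I would set $\wt E(\la):=\iunit E(\la)$ and note that $\wt E(\la)$ is a Hermitian $n\times n$ pencil with $\rank \wt E=\rank E=r$, since multiplication by the nonzero scalar $\iunit$ does not change the rank (the normal rank is the rank over the field of rational functions, and scaling by a unit is invertible). Then Theorem~\ref{rank1-herm.th} applies to $\wt E$ and yields a decomposition
\[
\wt E(\la)=(a_1+\la b_1)u_1u_1^*+\cdots+(a_\ell+\la b_\ell)u_\ell u_\ell^*+v_1w_1^*+\cdots+v_sw_s^*+w_1v_1^*+\cdots+w_sv_s^*
\]
with $a_i,b_i\in\RR$, $\ell+2s=r$, and the degree constraints $\deg u_i=\deg v_j=0$, $\deg w_j\le 1$ exactly as in~\eqref{herm-rank1}.

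Next I would simply multiply through by $-\iunit=1/\iunit$ to recover $E(\la)=-\iunit\wt E(\la)$. The diagonal terms become $-\iunit(a_i+\la b_i)u_iu_i^*=\iunit(-a_i-\la b_i)u_iu_i^*=\iunit(a_i'+\la b_i')u_iu_i^*$ with $a_i':=-a_i$, $b_i':=-b_i$, which are again real, so these terms are already of the form required in~\eqref{sk-rank1}. For the off-diagonal terms, I would absorb the factor $-\iunit$ into one of the two vectors in each conjugate pair: write $-\iunit(v_jw_j^*+w_jv_j^*)=(-\iunit v_j)w_j^*-w_j(-\iunit v_j)^*$, using that $(-\iunit v_j)^*=\overline{(-\iunit)}v_j^*=\iunit v_j^*$, so that $w_j(-\iunit v_j)^*=\iunit w_jv_j^*$ and hence $-\iunit v_jw_j^*-\iunit w_jv_j^*=(-\iunit v_j)w_j^*-w_j(-\iunit v_j)^*$. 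Replacing $v_j$ by $-\iunit v_j$ (still a constant vector of degree $0$) this is precisely the pattern $v_jw_j^*-w_jv_j^*$ appearing in~\eqref{sk-rank1}. Collecting everything gives exactly the claimed form, with $\ell+2s=r$ inherited verbatim and all degree conditions preserved under multiplication by a scalar and under the substitution $v_j\mapsto -\iunit v_j$.

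There is essentially no obstacle here: the only point requiring a moment's care is the sign bookkeeping in the off-diagonal pair, where the conjugate transpose interacts with the imaginary scalar — one must put the scalar on $v_j$ (not on $w_j$) and track that this converts the symmetric pair $v_jw_j^*+w_jv_j^*$ into the skew pair $v_jw_j^*-w_jv_j^*$ demanded by the skew-Hermitian structure. Everything else is a transcription of Theorem~\ref{rank1-herm.th}. I would present the proof in one short paragraph: state $\wt E=\iunit E$ is Hermitian of the same rank, invoke Theorem~\ref{rank1-herm.th}, and then write out the one-line manipulation $E=-\iunit\wt E$ with the substitution absorbing $-\iunit$ into the $v_j$'s and flipping the sign of $(a_i,b_i)$.
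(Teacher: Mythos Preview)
Your proof is correct and is essentially identical to the paper's own argument: the paper also derives \eqref{sk-rank1} from the Hermitian decomposition \eqref{herm-rank1} via the observation that $\iunit E$ (equivalently $-\iunit E$) is Hermitian, with the same one-line absorption of the scalar $\iunit$ into the $v_j$'s to convert the symmetric pair $v_jw_j^*+w_jv_j^*$ into the skew pair $v_jw_j^*-w_jv_j^*$. The only cosmetic difference is that the paper multiplies by $\iunit$ and sets $\widetilde v_j=\iunit v_j$, whereas you multiply by $-\iunit$ and set $v_j\mapsto -\iunit v_j$, which of course amounts to the same thing up to a harmless sign in the real parameters $a_i,b_i$.
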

\begin{theorem}\label{rank1-stareven.th} {\rm(Rank-$1$ decomposition for $*$-even pencils).}
If $E(\la)$ is a $*$-even $n\times n$ matrix pencil with $\rank E=r\leq n$, then it can be written as
\begin{equation}\label{*even-rank1}
\begin{array}{cl}
E(\la)=&(a_1+\la (b_1\iunit))u_1u_1^*+\cdots+(a_\ell+\la (b_\ell\iunit))u_\ell u_\ell^*\\
&+v_1w_1(\la)^*+\cdots+v_sw_s(\la)^*+w_1(-\la)v_1^*+\cdots+w_s(-\la)v_s^*,
\end{array}
\end{equation}
where $a_i,b_i\in\RR$, for $i=1,\hdots,\ell$, and
\begin{itemize}
\item[(i)] $\ell+2s=r$,
\item[(ii)] $\deg u_1=\cdots=\deg u_\ell=0=\deg v_1=\cdots=\deg v_s$ and $\deg w_1,\hdots,\deg w_s\leq 1$.
\end{itemize}
\end{theorem}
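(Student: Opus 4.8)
The plan is to reduce the $*$-even case to the Hermitian case of Theorem~\ref{rank1-herm.th}, using the observation recalled just before the statement: if $A+\la B$ is $*$-even, then $A+\la(\iunit B)$ is Hermitian. Concretely, writing $E(\la)=A+\la B$ with $A^*=A$ and $B^*=-B$, I would set $\widetilde E(\la):=E(\iunit\la)=A+\la(\iunit B)$ and first verify that $\widetilde E$ is Hermitian, which is immediate since $A^*=A$ and $(\iunit B)^*=\overline{\iunit}\,B^*=(-\iunit)(-B)=\iunit B$. Because $\la\mapsto\iunit\la$ is an invertible change of variable (a $\CC$-algebra automorphism of $\CC(\la)$), it preserves the normal rank, so $\rank\widetilde E=\rank E=r$. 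Applying Theorem~\ref{rank1-herm.th} to $\widetilde E$ then yields a Hermitian rank-$1$ decomposition $\widetilde E(\la)=\sum_{i=1}^{\ell}(a_i+\la b_i)u_iu_i^*+\sum_{j=1}^{s}\big(v_j\widetilde w_j(\la)^*+\widetilde w_j(\la)v_j^*\big)$ with $a_i,b_i\in\RR$, $\ell+2s=r$, $\deg u_i=\deg v_j=0$, and $\deg\widetilde w_j\le 1$.

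Next I would recover a decomposition of $E$ itself by performing the inverse substitution, using $E(\la)=\widetilde E(-\iunit\la)$. The one point that needs care is the convention, recalled in the excerpt, that $*$ acts only on the coefficients of a pencil and not on the variable: writing $\widetilde w_j(\la)=w_{j,A}+\la w_{j,B}$, one has $\widetilde w_j(\la)^*=w_{j,A}^*+\la w_{j,B}^*$ with $\la$ untouched. Substituting $\la\mapsto-\iunit\la$ then turns $(a_i+\la b_i)u_iu_i^*$ into $(a_i+\la(-b_i)\iunit)u_iu_i^*$, which is of the required form after renaming $-b_i$ as $b_i$ (since $b_i$ ranges over $\RR$); and, setting $w_j(\la):=\widetilde w_j(\iunit\la)=w_{j,A}+\iunit\la w_{j,B}$ (again a pencil of degree $\le 1$), the identities $\widetilde w_j(-\iunit\la)^*=w_{j,A}^*-\iunit\la w_{j,B}^*=w_j(\la)^*$ and $\widetilde w_j(-\iunit\la)=w_{j,A}-\iunit\la w_{j,B}=w_j(-\la)$ show that the pair $v_j\widetilde w_j(\la)^*+\widetilde w_j(\la)v_j^*$ becomes $v_jw_j(\la)^*+w_j(-\la)v_j^*$. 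These two short computations are the only ones needed.

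Assembling the pieces gives exactly the decomposition \eqref{*even-rank1}, with the same $\ell$ and $s$ as in the Hermitian decomposition, so $\ell+2s=r$ and the degree conditions are inherited from Theorem~\ref{rank1-herm.th}. There is no genuine obstacle here; the entire structural content is carried by the Hermitian decomposition together with the fact that $\la\mapsto\iunit\la$ is a bijection between $*$-even and Hermitian pencils of a given size and rank. The only thing to watch is the bookkeeping: keeping track that conjugation never touches $\la$, and keeping track of the signs and the factors of $\iunit$ introduced when passing from $E$ to $\widetilde E$ and back.
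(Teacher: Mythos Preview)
Your proposal is correct and follows essentially the same approach as the paper: both reduce to the Hermitian case via the observation that $E(\iunit\la)=A+\la(\iunit B)$ is Hermitian, apply Theorem~\ref{rank1-herm.th}, and then undo the substitution (which the paper phrases as ``multiplying the leading coefficient by $-\iunit$'') with exactly the same bookkeeping on $w_j$ and the $*$-convention that you carry out. Your presentation is in fact a bit cleaner notationally than the paper's, which reuses the symbol $w(\la)$ for both the Hermitian and the $*$-even decomposition.
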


\begin{theorem}\label{rank1-starodd.th} {\rm(Rank-$1$ decomposition for $*$-odd pencils).}
If $E(\la)$ is a $*$-odd $n\times n$ matrix pencil with $\rank E=r\leq n$, then it can be written as
\begin{equation}\label{*odd-rank1}
\begin{array}{cl}
E(\la)=&(a_1\iunit+\la b_1)u_1u_1^*+\cdots+(a_\ell\iunit+\la b_\ell)u_\ell u_\ell^*\\
&+v_1w_1(\la)^*+\cdots+v_sw_s(\la)^*-w_1(-\la)v_1^*-\cdots-w_s(-\la)v_s^*,
\end{array}
\end{equation}
where $a_i,b_i\in\RR$, for $i=1,\hdots,\ell$, and
\begin{itemize}
\item[(i)] $\ell+2s=r$,
\item[(ii)] $\deg u_1=\cdots=\deg u_\ell=0=\deg v_1=\cdots=\deg v_s$ and $\deg w_1,\hdots,\deg w_s\leq 1$.
\end{itemize}
\end{theorem}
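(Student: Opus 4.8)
The plan is to reduce the $*$-odd case to the $*$-even case exactly as announced in the bullet list preceding the theorem, namely via the equivalence ``$A+\la B$ is $*$-odd if and only if $B+\la A$ is $*$-even''. First I would observe that if $E(\la)=A+\la B$ is $*$-odd, then its reversal $\rev E(\la)=B+\la A$ is $*$-even, and that reversal preserves the normal rank, so $\rank(\rev E)=r$ as well. By Theorem~\ref{rank1-stareven.th} applied to $\rev E$, we obtain a decomposition
\[
\rev E(\la)=\sum_{i=1}^\ell\bigl(a_i+\la(b_i\iunit)\bigr)u_iu_i^*
+\sum_{j=1}^s v_j\widetilde w_j(\la)^*+\sum_{j=1}^s \widetilde w_j(-\la)v_j^*,
\]
with $\ell+2s=r$, the $a_i,b_i$ real, the $u_i,v_j$ constant, and $\deg\widetilde w_j\le 1$.

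Next I would apply the reversal operator once more to recover $E(\la)$, using $\rev(\rev E)=E$ and Remark~\ref{reversal.rem}, which tells us $\rev(v\,\widetilde w(\la)^*)=v\cdot(\rev\widetilde w)(\la)^*$ for a constant $v$ and $\deg\widetilde w\le 1$ (here one must remember that for $\star=*$ the reversal acts only on the coefficients and commutes with conjugation on the constant-degree factors; the only subtlety is tracking the conjugate on the degree-one factor, which is handled by Remark~\ref{reversal.rem}). The rank-one term $(a_i+\la(b_i\iunit))u_iu_i^*$ reverses to $(b_i\iunit+\la a_i)u_iu_i^* = (a_i\iunit+\la b_i')\,u_iu_i^*$ after renaming: more cleanly, $\rev\bigl((a_i+\la b_i\iunit)u_iu_i^*\bigr)=(b_i\iunit+\la a_i)u_iu_i^*$, which upon relabeling the real scalars (replace $a_i\leftrightarrow -b_i$ etc., or simply rename $a_i\iunit+\la b_i$ so that the coefficient pattern matches \eqref{*odd-rank1}) gives a term of the required form $(a_i\iunit+\la b_i)u_iu_i^*$ with $a_i,b_i\in\RR$. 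Setting $w_j:=\rev\widetilde w_j$, the cross terms become $v_jw_j(\la)^*$ and $w_j(-\la)v_j^*$; the sign change $+$ to $-$ on the second block of cross terms, matching \eqref{*odd-rank1}, is either absorbed into $w_j$ (since $*$-odd rather than $*$-even is what we target) or comes directly from the fact that $\iunit(\rev E)$ versus $\rev E$ differs by the unit, precisely as in the passage from Theorem~\ref{rank1-herm.th} to Theorem~\ref{rank1-sk.th}. One checks that $\deg w_j=\deg\widetilde w_j\le 1$ and that the parities of $\ell$ and $s$ are preserved, so $\ell+2s=r$ still holds.

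The only genuine bookkeeping obstacle—and it is minor—is matching the exact placement of the imaginary unit $\iunit$ and of the sign in the two cross-term blocks of \eqref{*odd-rank1} against what the reversal of \eqref{*even-rank1} literally produces. This is resolved by noting that reversal swaps the constant and $\la$-coefficients of each factor, so $a_i+\la b_i\iunit$ becomes $b_i\iunit+\la a_i$; after the harmless relabeling $a_i\mapsto b_i$, $b_i\mapsto a_i$ (both still real) this is exactly $a_i\iunit+\la b_i$, as required, and the sign on the conjugate-transposed cross terms flips because $\rev$ together with the hermitian transpose on a degree-one polynomial vector introduces the requisite reversal of its two coefficients, matching the $-$ in \eqref{*odd-rank1}. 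Alternatively, and perhaps more transparently, one can bypass reversal altogether and mimic the proof of Theorem~\ref{rank1-herm.th} directly, working from the canonical form of $*$-odd pencils \cite[Theorem 2.16]{batzke-thesis} and decomposing each canonical block block-by-block as in cases 1)--4) there, with the roles of the coefficient slots interchanged; either route yields \eqref{*odd-rank1} with the stated degree and parity constraints.
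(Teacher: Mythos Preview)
Your approach is exactly the paper's: apply Theorem~\ref{rank1-stareven.th} to $\rev E$, then reverse back. The diagonal terms are handled correctly after relabeling. However, your account of the sign flip on the cross terms is muddled. The suggestion that the sign ``is either absorbed into $w_j$'' does not work: replacing $w_j$ by $-w_j$ changes the sign of \emph{both} cross terms simultaneously, so it cannot convert $vw^*+w(-\la)v^*$ into $vw^*-w(-\la)v^*$. The alternative explanation via $\iunit(\rev E)$ is also beside the point; no multiplication by $\iunit$ occurs here.

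The sign flip is a direct consequence of the reversal computation, and the paper spells it out: writing $\widetilde w(\la)=w_0+\la w_1$, the $*$-even cross term is
\[
v\widetilde w(\la)^*+\widetilde w(-\la)v^*=v(w_0^*+\la w_1^*)+(w_0-\la w_1)v^*,
\]
whose reversal (swap constant and leading coefficients) is
\[
v(w_1^*+\la w_0^*)+(-w_1+\la w_0)v^*=vw(\la)^*-w(-\la)v^*
\]
with $w(\la):=\rev\widetilde w(\la)=w_1+\la w_0$. That is where the minus sign actually comes from; once you write this one line, the rest of your argument goes through as stated.
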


The decomposition in \eqref{sk-rank1} follows from \eqref{herm-rank1} after multiplying by $\iunit$ and
using that, for any pair of vectors $u,v\in\CC[\la]^n$, we can write
$\iunit(uw^*+wv^*)=(\iunit v)w^*-w(\iunit v)^*=\widetilde vw^*-w\widetilde v^*$, with $\widetilde v=\iunit v$.
Similarly, the expression \eqref{*even-rank1} follows from \eqref{herm-rank1} applied to $E(\iunit\la)$ and then
multiplying \vm{ the leading coefficient in the decomposition by $-\iunit$}. Note that, if
$A+\la (\iunit B)=vw(\la)^*+w(\la)v^*=v(w_0^*+\la w_1^*)+(w_0+\la w_1)v^*$ (with $v\in\CC^n$ and $w(\la)=w_0+\la w_1$,
$w_0,w_1\in\CC^n$), then, \vm{multiplying the leading coefficient by $-\iunit$,} we get
$
A+\la B=v(w_0^*-\iunit \la w_1^*)+(w_0-\iunit \la w_1)v^*=v(w_0^*+\la (\iunit w_1)^*)+(w_0-\la (\iunit w_1))v^*=vw(\la)^*+w(-\la)v^*
$. Finally, the decomposition \eqref{*odd-rank1} follows from
\eqref{*even-rank1} applied to $\rev E(\la)$ and then applying the reversal to the decomposition in the right-hand side.
Note that, if $\la A+B=v\widetilde w(\la)^*+\widetilde w(-\la)v^*=v(w_0^*+\la w_1^*)+(w_0-\la w_1)v^*$ (with $v\in\CC^n$
and $\widetilde w(\la)=w_0+\la w_1$, $w_0,w_1\in\CC^n$), then $A+\la B=v(w_1^*+\la w_0^*)-(w_1-\la w_0)v^*=vw(\la)^*-w(-\la)v^*$,
where $w(\la)=\rev\widetilde w(\la)$.

As for the $*$-palindromic structure, the decomposition follows from \eqref{*even-rank1} using appropriate
Cayley transforms, like for the $\top$-palindromic structure.
 \begin{theorem}\label{rank1-starpal.th} {\rm(Rank-$1$ decomposition for $*$-palindromic pencils).}
If $E$ is a $*$-palindromic $n\times n$ matrix pencil with $\rank E=r\leq n$, then it can be written as
\begin{equation}\label{*pal-rank1}
\begin{array}{cl}
E(\la)=&\big((a_1-b_1\iunit)+\la(a_1+b_1\iunit)\big)u_1u_1^*+\cdots+\big((a_\ell-b_\ell\iunit)+
\la(a_\ell+b_\ell\iunit)\big)u_\ell u_\ell^* \\
&+v_1w_1^*+\cdots+v_sw_s^*+(\rev w_1)v_1^*+\cdots+(\rev w_s)v_s^*,
\end{array}
\end{equation}
where $a_i,b_i\in\RR$, for $i=1,\hdots,\ell$, and
\begin{itemize}
\item[(i)] $\ell+2s=r$,
\item[(ii)] $\deg u_1=\cdots=\deg u_\ell=0=\deg v_1=\cdots=\deg v_s$ and $\deg w_1,\hdots,\deg w_s\leq 1$.
\end{itemize}
\end{theorem}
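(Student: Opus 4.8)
The plan is to mimic the proof of Theorem~\ref{rank1-tpal.th} almost verbatim, substituting the $*$-even decomposition of Theorem~\ref{rank1-stareven.th} for the $\top$-even one used there and replacing $\top$ by $*$ throughout the Cayley-transform bookkeeping. First I would record that if $E(\la)=A+\la B$ is $*$-palindromic, i.e.\ $A^*=B$, then, with ${\cal C}_{+1}$ as in~\eqref{cayley}, the pencil ${\cal C}_{+1}(E)(\la)=(1-\la)E\big(\frac{1+\la}{1-\la}\big)=(A+B)+\la(B-A)$ is $*$-even; this follows from \cite[Theorem 2.7]{4m-good} exactly as in the $\top$ case, and it is in any case immediate since $A^*=B$ gives $(A+B)^*=A+B$ and $(B-A)^*=A-B=-(B-A)$. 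Since ${\cal C}_{+1}$ preserves rank, ${\cal C}_{+1}(E)$ is a $*$-even pencil of rank $r$, and Theorem~\ref{rank1-stareven.th} provides a decomposition
\[
{\cal C}_{+1}(E)(\la)=\sum_{i=1}^\ell\big(a_i+\la(b_i\iunit)\big)u_iu_i^*+\sum_{j=1}^s v_jw_j(\la)^*+\sum_{j=1}^s w_j(-\la)v_j^*,
\]
with $a_i,b_i\in\RR$, $\ell+2s=r$, $\deg u_i=\deg v_j=0$ and $\deg w_j\le 1$.

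Next I would apply ${\cal C}_{-1}$ to both sides. Since ${\cal C}_{-1}$ is linear in the pencil and ${\cal C}_{-1}\big({\cal C}_{+1}(P)\big)(\la)=2P(\la)$ by \cite[Proposition 2.5]{4m-good}, the left-hand side becomes $2E(\la)$, while the right-hand side is assembled from the images of the individual summands. The computations I expect to need are: first, ${\cal C}_{-1}\big((a+\la(b\iunit))uu^*\big)(\la)=\big((a-b\iunit)+\la(a+b\iunit)\big)uu^*$ for a constant vector $u$ and real $a,b$, which is exactly the shape of the first group of terms in~\eqref{*pal-rank1}; and second, for a constant vector $v$ and a pencil $w$ with $\deg w\le 1$, putting $\wh w(\la):=(1+\la)w\big(\frac{\la-1}{1+\la}\big)$ (again a pencil of degree at most one), the identities ${\cal C}_{-1}\big(v w(\la)^*\big)=v\,\wh w(\la)^*$ and ${\cal C}_{-1}\big(w(-\la)v^*\big)=(\rev\wh w)v^*$, the latter using the identity~\eqref{reversal-w} that already appeared in the proof of Theorem~\ref{rank1-tpal.th}. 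The one point requiring a moment's care is that $*$ acts on the pencil coefficients only and not on $\la$, so it commutes with the substitution $\la\mapsto\frac{\la-1}{1+\la}$ (whose coefficients are real); this is precisely why ${\cal C}_{-1}\big(v w(\la)^*\big)=v\,\wh w(\la)^*$ holds with the same pencil $\wh w$ as in the $\top$ case.

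Putting these together gives $2E(\la)=\sum_{i}\big((a_i-b_i\iunit)+\la(a_i+b_i\iunit)\big)u_iu_i^*+\sum_j v_j\wh w_j^*+\sum_j(\rev\wh w_j)v_j^*$; dividing by $2$ and absorbing the real factor $\frac12$ into the scalars $a_i,b_i$ (which remain real) and into the vectors $v_j$ (legitimate because $\frac12$ is real, so it commutes with $*$, and $\rev$ is $\CC$-linear) yields a decomposition of $E(\la)$ of the form~\eqref{*pal-rank1}, with the count $\ell+2s=r$, the reality of the $a_i,b_i$, and the degree constraints all inherited unchanged from the $*$-even decomposition. I do not anticipate a genuine obstacle: the argument is essentially a transfer of the $\top$-palindromic proof through the Cayley transform, and the only thing to watch is the routine but slightly delicate bookkeeping of how $*$, the reversal, and the substitution $\la\mapsto\frac{\la-1}{1+\la}$ interact — in particular the verification that $(\rev\wh w)v^*={\cal C}_{-1}\big(w(-\la)v^*\big)$, which is where~\eqref{reversal-w} is used.
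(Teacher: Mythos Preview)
Your proposal is correct and follows essentially the same approach as the paper: apply ${\cal C}_{+1}$ to pass from $*$-palindromic to $*$-even, invoke the $*$-even rank-$1$ decomposition (Theorem~\ref{rank1-stareven.th}), and then apply ${\cal C}_{-1}$ term by term using ${\cal C}_{-1}\big((a+\la(b\iunit))uu^*\big)=\big((a-b\iunit)+\la(a+b\iunit)\big)uu^*$ together with~\eqref{reversal-w} for the paired terms. Your explicit remark that $*$ commutes with the real substitution $\la\mapsto\frac{\la-1}{1+\la}$ is the one point the paper glosses over when it says ``just replacing $\top$ by $*$''; otherwise the two arguments are the same.
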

\begin{proof}
The proof is similar to the one of Theorem \ref{rank1-tpal.th}, but we include it here to illustrate where
the difference in the first $\ell$ summands comes from. In particular, if $E(\la)$ is $*$-palindromic as in
the statement, then ${\cal C}_{+1}(E)$ is $*$-even \cite[Theorem 2.7]{4m-good}. Therefore, it admits a
decomposition like \eqref{*even-rank1}. Now
\[
\begin{array}{crl}
2E(\la)=&{\cal C}_{-1}\big({\cal C}_{+1}(E)\big)=&{\cal C}_{-1}\left(\sum_{i=1}^\ell
\big(a_i+\la (b_i\iunit)\big)u_iu_i^*\right)
+{\cal C}_{-1}\left(\sum_{j=1}^s\big(v_jw_j(\la)^*+w_j(-\la)v_j^*\big)\right)\\
&=&\sum_{i=1}^\ell\big((a_i-b_i\iunit)+\la(a_i+b_i\iunit)\big)u_iu_i^*+\sum_{j=1}^s\big(v_jw_j^*+(\rev w_j)v_j^*\big),
\end{array}
\]
where, for the first sum, we have used that
\[
{\cal C}_{-1}\big((a+\la (b\iunit))uu^*\big)=(1+\la)\left(a+\frac{\la-1}{1+\la}\,b\iunit\right)uu^*=
\big((a-b\iunit)+\la(a+b\iunit)\big)uu^*,
\]
and, for the second sum, we have followed exactly the same steps as in the proof of Theorem \ref{rank1-tpal.th},
just replacing $\top$ by $*$.
\end{proof}
%
Note that the first $\ell$ summands in the right-hand side of \eqref{*pal-rank1} come from eigenvalues
of $E(\la)$ which lie on the unit circle. Moreover, any complex value on the unit circle can be identified as
a root of a linear polynomial of the form $(a-b\iunit)+\la(a+b\iunit)$.
%
\begin{theorem}\label{rank1-starantipal.th} {\rm(Rank-$1$ decomposition for $*$-anti-palindromic pencils).}
If $E(\la)$ is a $*$-anti--palindromic $n\times n$ matrix pencil with $\rank E=r\leq n$,
 then it can be written as
\begin{equation}\label{*antipal-rank1}
\begin{array}{cl}
E(\la)=&((a_1+b_1\iunit)+\la(-a_1+b_1\iunit))u_1u_1^*+\cdots+((a_\ell+b_\ell\iunit)+\la(-a_\ell+b_\ell\iunit))u_\ell u_\ell^* \\
&+v_1w_1^*+\cdots+v_sw_s^*-(\rev w_1)v_1^*-\cdots-(\rev w_s)v_s^*,
\end{array}
\end{equation}
where $a_i,b_i\in\RR$, for $i=1,\hdots,\ell$, and
\begin{itemize}
\item[(i)] $\ell+2s=r$,
\item[(ii)] $\deg u_1=\hdots=\deg u_\ell=0=\deg v_1=\hdots=\deg v_s$ and $\deg w_1,\hdots,\deg w_s\leq 1$.
\end{itemize}
\end{theorem}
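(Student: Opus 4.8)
The plan is to mirror the proof of Theorem~\ref{rank1-tantipal.th}, reducing the statement to the $*$-even case by means of Cayley transformations. Recall from \cite[Theorem~2.7]{4m-good} that if $E(\la)$ is $*$-anti-palindromic, then ${\cal C}_{-1}(E)$ is $*$-even, and that ${\cal C}_{-1}$ preserves the rank; hence ${\cal C}_{-1}(E)$ is a $*$-even pencil of rank $r$ and, by Theorem~\ref{rank1-stareven.th}, admits a decomposition as in~\eqref{*even-rank1}. I would then apply ${\cal C}_{+1}$ to that decomposition and use ${\cal C}_{+1}({\cal C}_{-1}(E))=2E$ (see \cite[Proposition~2.5]{4m-good}) to recover $2E(\la)$; dividing by $2$ and renaming the vectors will give~\eqref{*antipal-rank1}.

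The core of the argument is to compute the effect of ${\cal C}_{+1}$ on the two kinds of summands occurring in~\eqref{*even-rank1}. For a term $(a+\la(b\iunit))uu^*$ with $a,b\in\RR$, a direct substitution gives
\[
{\cal C}_{+1}\big((a+\la(b\iunit))uu^*\big)=(1-\la)\Big(a+\frac{1+\la}{1-\la}\,b\iunit\Big)uu^*=\big((a+b\iunit)+\la(-a+b\iunit)\big)uu^*,
\]
which is exactly the shape of the first $\ell$ summands in~\eqref{*antipal-rank1} (and, in particular, shows these summands encode the eigenvalues of $E(\la)$ on the unit circle, since $(a+b\iunit)+\la(-a+b\iunit)$ vanishes at $\la=(a+b\iunit)/(a-b\iunit)$). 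For a paired term $v_jw_j(\la)^*+w_j(-\la)v_j^*$, the same computation as in the proof of Theorem~\ref{rank1-tpal.th}, with $\top$ replaced by $*$ and keeping in mind that $*$ acts on the coefficient vectors but not on $\la$, yields
\[
{\cal C}_{+1}\big(v_jw_j(\la)^*\big)=v_j\Big((1-\la)w_j\big(\tfrac{1+\la}{1-\la}\big)\Big)^*,\qquad
{\cal C}_{+1}\big(w_j(-\la)v_j^*\big)=(1-\la)w_j\big(\tfrac{1+\la}{\la-1}\big)v_j^*,
\]
and then the reversal identity
\[
\rev\Big((1-\la)w\big(\tfrac{1+\la}{1-\la}\big)\Big)=\la\Big(1-\tfrac1\la\Big)w\big(\tfrac{1+\frac1\la}{1-\frac1\la}\big)=-(1-\la)w\big(\tfrac{1+\la}{\la-1}\big)
\]
shows that, upon setting $\widehat w_j:=(1-\la)w_j\big(\tfrac{1+\la}{1-\la}\big)$, the paired term transforms into $v_j\widehat w_j^*-(\rev\widehat w_j)v_j^*$, which matches the paired summands in~\eqref{*antipal-rank1}; the minus sign there is precisely the one produced by this identity, in contrast with the $+$ sign of the $*$-palindromic case~\eqref{*pal-rank1}.

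It remains only to collect the bookkeeping: $\deg u=\deg v_j=0$ and $\deg\widehat w_j\le 1$ are inherited from Theorem~\ref{rank1-stareven.th} (the substitution $\la\mapsto\frac{1+\la}{1-\la}$ multiplied by $1-\la$ keeps degrees $\le 1$), and $\ell+2s=r$ is unchanged because ${\cal C}_{+1}$ is a degree-preserving bijection on pencils that does not alter the number of rank-one summands; the case $r$ even is the same computation without the $uu^*$ term. I do not anticipate a genuine obstacle here — the only delicate point is the interaction of the conjugate transpose with the Cayley substitution, and once the three scalar identities above are checked, the argument runs exactly parallel to the already established Theorems~\ref{rank1-tantipal.th} and~\ref{rank1-starpal.th}.
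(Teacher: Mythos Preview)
Your proposal is correct and follows essentially the same route as the paper, which simply states that ``the proof follows the same steps as the proof of Theorem~\ref{rank1-tantipal.th}''; you have in fact spelled out the details (the effect of ${\cal C}_{+1}$ on $(a+\la(b\iunit))uu^*$ and the reversal identity) more explicitly than the paper does. One small remark: your closing comment that ``the case $r$ even is the same computation without the $uu^*$ term'' is phrased as if $\ell\in\{0,1\}$, which is the $\top$-case; here there are $\ell$ such terms with $\ell+2s=r$, but this does not affect your argument since the Cayley transform is applied termwise.
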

\begin{proof}
The proof follows the same steps as the proof of Theorem \ref{rank1-tantipal.th}.
\end{proof}
Concerning minimality of the parameter $\ell$, there is a characterization analogous to the one in
Theorem~\ref{thm:sigsum} involving the signsum of real eigenvalues in the case of skew-Hermitian pencils,
of purely imaginary eigenvalues in the case of $*$-even and $*$-odd pencils, or unimodular eigenvalues
in the case of $*$-palindromic or $*$-anti-palindromic pencils. We refrain from explicitly stating
these characterizations.

\section{Structure-preserving low-rank perturbations}\label{main.sec}

In this section, we will develop our main results on the change of the partial multiplicities
of eigenvalues of matrix pencils with symmetry structure under generic structure-preserving
low-rank perturbations. For this, we follow the approach in \cite{dd16}. More precisely, let $\sym_r$ be the set of
matrix pencils with structure $\sym$ and with rank at most $r$, where $\sym$ is any of the structures
mentioned in Section \ref{rank1.sec}, let $L(\la)$ be a regular pencil (with structure $\sym$)
and let $\la_0$ be an eigenvalue of $L(\la)$ (finite or infinite). The procedure then
consists of two main steps:
\begin{itemize}
\item[] {\bf Step 1.} Obtain a (polynomial) parameterization of $\sym_r$.
\item[] {\bf Step 2.} Prove that, for a generic set of parameters, all pencils $E(\la)\in\sym_r$
obtained from the previous parameterization are such that the partial multiplicities of $(L+E)(\la)$
at $\la_0$ are the ones described in the main results (given in Section \ref{pert.sec}).
\end{itemize}
{Step 1} is addressed in Section \ref{param.sec}, and {Step 2} is addressed in Section \ref{pert.sec}.
For the realization of {Step 2} we will need as a key ingredient a localization result that we develop in
Section~\ref{local.sec}, where we will also clarify the notion of genericity.

\subsection{A localization result}\label{local.sec}

Let $\mathbb F$ denote one of the fields $\mathbb R$ or $\mathbb C$, we then use the following notion of genericity.
\begin{definition}\label{gener.def}
A {\em generic set} $\cG$ of $\FF^m$ is a subset of $\FF^m$ whose complement is contained
in a proper algebraic set, i.e., $\cG$ is nonempty and coincides with the complement of a set of common zeros of
finitely many polynomials in $m$ variables.
\end{definition}

We highlight that even though in this paper we only deal with the case of complex matrix pencils, we have to use the
concept of genericity with respect to the real numbers when symmetry structures involving the conjugate transpose are
considered, because complex conjugation is not a polynomial map on $\mathbb C$. This problem can be circumvented if we
identify $\mathbb C^m$ with $\mathbb R^{2m}$ by considering the real and imaginary parts of each component separately.
In this context, complex conjugation is an $\mathbb R$-linear map and thus in particular polynomial.

We will need the following result, which is almost identical to \cite[Lemma 3.1]{MehR17}.
(The parameter $\mu$ will be equal to $\colb 1$ for most cases, which corresponds to simple eigenvalues.
However, in the case of skew-symmetric matrix pencils, considered in Theorem \ref{skews-main.th}, we will apply
the result with $\mu=2$.)
\begin{lemma}\label{simp}
Let $A\in\mathbb C^{n\times n}$ have the pairwise distinct eigenvalues
$\lambda_1,\dots,\lambda_\kappa\in\mathbb C$ with algebraic multiplicities $a_1,\dots,a_\kappa$, and
let $\varepsilon>0$ be such that the discs
\[
D_j:=\big\{\mu\in\mathbb C \,:\,|\lambda_j-\mu|<\varepsilon^{2/n}\big\},\quad j=1,\dots,\kappa
\]
are pairwise disjoint. Furthermore, let $U\subseteq\FF^m$ be open and
let $C:U\to\mathbb C^{n\times n}$ be an analytic function with $C(0)=A$, such that the following
conditions are satisfied:
\begin{enumerate}
\item[$1)$] For all $x\in U$, the algebraic multiplicity of any eigenvalue of
$C(u)$ is always a multiple of $\mu\in\mathbb N\setminus\{0\}$.
\item[$2)$] There exists a generic set $\cG\subseteq\FF^m$ such that, for all $x\in\cG\cap U$,
the matrix $C(x)$ has the eigenvalues $\lambda_1,\dots,\lambda_\kappa$
with algebraic multiplicities $\widetilde a_1,\dots,\widetilde a_\kappa$, where
$\widetilde a_j\leq a_j$ for $j=1,\dots,\kappa$. (Here, we allow $a_j=0$ in the case that $\lambda_j$ is no
longer an eigenvalue of $C(x)$.)
\item[$3)$] For each $j=1,\dots,\kappa$ there exists $x_j\in U$ with $\|x_j\|<\varepsilon$ such that
the matrix $C( x_j)$ has exactly $(a_j-\widetilde a_j)/\mu$ pairwise distinct eigenvalues in $D_j$
different from $\lambda_j$ and each one has algebraic multiplicity exactly $\mu$.
\end{enumerate}
Then there exists $\varepsilon'>0$ and a set $\cG_0$, open and dense in $\{x\in\FF^m\mid\|x\|<\varepsilon'\}$,
with $\cG_0\subseteq U$, such that, for all $x\in\cG_0$, the pencil
$C(x)$ has exactly $\sum_{j=1}^{\kappa}\frac{1}{\mu}(a_j-\widetilde a_j)$ eigenvalues
that are different from those of $A$ and each of these eigenvalues has
algebraic multiplicity exactly $\mu$.
\end{lemma}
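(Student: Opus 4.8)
The plan is to follow closely the proof of \cite[Lemma 3.1]{MehR17}, adapting it to the presence of the fixed multiplicity $\mu$. The main idea is a standard holomorphic-perturbation / continuity argument: eigenvalues of $C(x)$ depend continuously on $x$, so for $x$ small they cluster near the eigenvalues $\lambda_1,\dots,\lambda_\kappa$ of $A$, staying inside the disjoint discs $D_j$, and one must count how many of them leave the points $\lambda_j$. Condition~2) forces that generically at most $\widetilde a_j$ of the mass at $\lambda_j$ survives, while condition~3) exhibits at least one parameter value where exactly $(a_j-\widetilde a_j)/\mu$ new eigenvalues of multiplicity $\mu$ appear near $\lambda_j$; condition~1) guarantees the ``$/\mu$'' bookkeeping is consistent. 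One then upgrades these two one-sided statements, valid on a generic set respectively at a single point, to a statement valid on a set that is open and dense in a small ball.

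Concretely, the first step is to fix $\varepsilon$ as in the hypothesis so that the discs $D_j$ are disjoint, and to choose $\varepsilon'\le\varepsilon$ small enough that for all $x$ with $\|x\|<\varepsilon'$ all eigenvalues of $C(x)$ lie in $\bigcup_j D_j$; this is possible by continuity of $C$ at $0$ together with a Rouch\'e / argument-principle estimate (this is exactly where the exponent $2/n$ in the definition of $D_j$ is used, controlling the perturbation of the characteristic polynomial's roots). The second step is to introduce, for each $j$ and each nonnegative integer $t$, the function
\[
N_{j}(x):=\#\big\{\text{eigenvalues of }C(x)\text{ in }D_j,\ \text{counted with multiplicity},\ \text{different from }\lambda_j\big\},
\]
which can be written as a contour integral of $\tfrac{1}{2\pi\iunit}\,p'(\zeta,x)/p(\zeta,x)$ (with $p(\zeta,x)=\det(\zeta I-C(x))$) over $\partial D_j$ minus the multiplicity of $\lambda_j$ as a root; since $p$ is analytic in $x$ and the contour avoids the eigenvalues for $x$ in a suitable open set, the total count $\sum_j N_j(x)$ is locally constant on the set where no eigenvalue sits on any $\partial D_j$, and more importantly the condition ``$C(x)$ has at least $k$ eigenvalues away from $\{\lambda_1,\dots,\lambda_\kappa\}$'' defines an open set. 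The third step: condition~3) shows that for each $j$ there is a point $x_j$ in the $\varepsilon$-ball with $N_j(x_j)=a_j-\widetilde a_j$; combining with the genericity from condition~2) (which says that on a generic, hence dense, set $N_j(x)\ge a_j-\widetilde a_j$ near $\lambda_j$ cannot be exceeded in the surviving-mass direction—more precisely the multiplicity of $\lambda_j$ is at most $\widetilde a_j$, so at least $a_j-\widetilde a_j$ eigenvalues have left), one gets that the set where $N_j(x)=a_j-\widetilde a_j$ for all $j$ simultaneously is open and contains points arbitrarily close to $0$. Finally, condition~1) together with the fact that the generic set is dense lets us further intersect with the generic set where additionally each of the new eigenvalues is \emph{simple of multiplicity exactly} $\mu$ (i.e.\ the $(a_j-\widetilde a_j)/\mu$ clusters are each a single eigenvalue of multiplicity $\mu$ rather than a coalesced group), yielding the desired set $\cG_0$ that is open and dense in a small ball.

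The last step deserves elaboration since it is the heart of the argument: one has to show that the open set just constructed is moreover \emph{dense} in $\{x:\|x\|<\varepsilon'\}$, not merely nonempty. Here the standard device is that $C$ is analytic and the property in question is described by the non-vanishing of finitely many analytic functions of $x$ (resultants/discriminants of the restricted characteristic polynomial on each $D_j$, together with the polynomial equations cutting out $\cG$ from condition~2)); an analytic function on a connected open set that is not identically zero has nowhere-dense zero set. Using condition~3) to certify that the relevant analytic functions are not identically zero, and shrinking $\varepsilon'$ if necessary so the ball is contained in the domain and connected, we conclude that the complement of $\cG_0$ is nowhere dense, i.e.\ $\cG_0$ is open and dense.

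The step I expect to be the main obstacle is precisely this transfer from ``there exists a good point $x_j$'' (condition~3), one point per $j$) to ``the good set is dense'' for \emph{all} $j$ simultaneously: one must argue that the single-point existence implies non-triviality of the appropriate analytic function, handle the interaction of the $\kappa$ separate conditions (which is where disjointness of the $D_j$ is essential so the counts decouple), and make sure the ``multiplicity exactly $\mu$'' refinement is compatible with being dense—this last point relies crucially on condition~1), which rules out the degenerate scenario in which new eigenvalues can only appear in groups whose total multiplicity is a multiple of $\mu$ but which never split into genuine multiplicity-$\mu$ eigenvalues. The infinite-eigenvalue bookkeeping does not actually arise here since the statement is purely about a matrix $A$ (despite the word ``pencil'' appearing once in the conclusion), so no reversal is needed; one only needs the matrix version.
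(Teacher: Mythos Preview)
Your proposal is correct and follows essentially the same approach as the paper, which itself omits the proof and refers to \cite[Lemma~3.1]{MehR17}; your sketch (contour-integral localization of eigenvalues to the discs $D_j$, analytic dependence of the restricted characteristic polynomial via spectral projections, counting distinct roots through Sylvester resultants, and passing from the single witness points $x_j$ of condition~3) to open-dense sets via non-triviality of analytic functions) is precisely the argument of that reference, with the parameter $\mu$ inserted in the bookkeeping exactly as the paper indicates. The one place where your write-up is slightly loose---the claim that ``$N_j(x)=a_j-\widetilde a_j$ for all $j$'' defines an open set---is harmless, since what you actually need (and what the \cite{MehR17} argument uses) is that this equality holds on the open dense set $\cG\cap\{\|x\|<\varepsilon'\}$, and then the further refinement to ``exactly $(a_j-\widetilde a_j)/\mu$ distinct eigenvalues'' is obtained by intersecting with the complements of the zero sets of the resultant-type analytic functions, each certified non-trivial by the corresponding $x_j$.
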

\proof
The proof is almost identical to the one of Lemma 3.1 in \cite{MehR17} and therefore omitted.
(One just has to replace $\mathbb R$ in \cite{MehR17} with $\FF$ and remove the final
paragraph on the proof which is not needed here, because the statement of Lemma~\ref{simp}
has been adapted correspondingly.)
\hfill$\Box$

\medskip
The next result generalizes \cite[Theorem 3.2]{MehR17} (which itself was an extension of
\cite[Theorem 2.6]{BatMRR16}) from the matrix to the pencil case and will be the main tool
in Section~\ref{pert.sec}.
\begin{theorem}\label{local}
Let $L(\lambda)=A+\lambda B$ be a regular complex $n\times n$ matrix pencil and let $\lambda_1,...,\lambda_\kappa$
be its pairwise distinct eigenvalues (finite or infinite) with geometric multiplicities $g_i$, nonzero partial multiplicities
$n_{i,1}\geq n_{i,2}\geq\cdots\geq n_{i,g_i}>0$, and algebraic multiplicities
\[
a_i=\sum_{j=1}^{g_i}n_{i,j},
\]
for $i=1,\dots,\kappa$, respectively. Let $\Phi:\FF^m\rightarrow\CC^{n\times n}\times\CC^{n\times n}$ be a polynomial map and,
for $x\in\FF^m$, let us identify $\Phi(x)=(\Phi_A(x),\Phi_B(x))$ with the pencil $\Phi_A(x)+\la \Phi_B(x)$. Furthermore,
assume that, for all $x\in\FF^m$, we have
\begin{enumerate}
\item[{\rm (i)}] $\Phi(0)=(0,0)$;
\item[{\rm (ii)}] $\operatorname{rank}\Phi(x)\leq r$;
\item[{\rm (iii)}] if $L+\Phi(x)$ is regular, then the algebraic multiplicity of any eigenvalue of $L+\Phi(x)$
is always a multiple of some $\mu\in\mathbb N\setminus\{0\}$.
\end{enumerate}
Then the following statements hold:
\begin{enumerate}
\item[\rm (1)] If $x\in \FF^m$ is such that $ L+\Phi(x)$ is regular and if
$\eta_{i,1} \geq \cdots \geq \eta_{\widetilde g_i}$ are the partial multiplicities associated with $\lambda_i$ as an
eigenvalue of $ L+\Phi(x)$, for $i=1,\dots,\kappa$
(here we allow $\widetilde g_i=0$ if $\lambda_i$ is not an eigenvalue of $ L+\Phi(x)$), then the
list $(\eta_{i,1},\dots,\eta_{i,\widetilde g_i})$ dominates the list $(n_{i,r+1},\dots,n_{i,g_i})$,
i.e., we have $\widetilde g_i\geq g_i-r$ and $\eta_{i,j} \geq n_{i,j+r}$, for $j=1, \ldots , g_i-r$
and $i=1,\dots,\kappa$.
\item[\rm (2)]
Assume that, for all $x\in\FF^m$ for which $ L+\Phi(x)$ is regular, we have that, for each
$i=1,\dots,\kappa$, the algebraic multiplicity $a_i^{(x)}$ of $\lambda_i$ as an eigenvalue of $ L+\Phi(x)$
satisfies $a_i^{(x)}\geq \widetilde a_i$, for some $\widetilde a_i\in\mathbb{N}$.
If, for any  $\varepsilon>0$ and each $i=1,\dots,\kappa$, there exists $x_{0,i}\in\FF^m$ with $\|x_{0,i}\|<\varepsilon$
such that $ L+\Phi(x_{0,i})$ is regular,
$a_i^{(x_{0,i})} =\widetilde a_i$, and all eigenvalues of $ L+\Phi(x_{0,i})$ that are different from
those of $L$ have multiplicity precisely $\mu$,
then there exists a generic set $\cG\subseteq\FF^m$ such that, for all $x\in\cG$, the following conditions are satisfied:
\begin{itemize}
\item[\rm (a)] the pencil $ L+\Phi(x)$ is regular;
\item[\rm (b)] $a_i^{(x)} =\widetilde a_i$ for all $i=1,\dots,\kappa$;
\item[\rm (c)] all eigenvalues of $ L+\Phi(x)$ which are different from those of $L$ have multiplicity precisely $\mu$.
\end{itemize}
If, in addition, we have $\widetilde a_i=n_{i,r+1}+\cdots+n_{i,g_i}$ for some $i\in\{1,\dots,\kappa\}$,
then the partial multiplicities of $\lambda_i$ as an eigenvalue of $ L+\Phi(x)$ are precisely
$n_{i,r+1},\dots,n_{i,g_i}$ for all $x\in\cG$.
\end{enumerate}
\end{theorem}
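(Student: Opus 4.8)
The plan is to split the statement into its two parts and handle them in order, reducing each to known results about the Weierstra\ss\ and Jordan structure under low-rank perturbations.

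\textbf{Part (1): the domination estimate.}
First I would fix $x\in\FF^m$ for which $L+\Phi(x)$ is regular and observe that, by (ii), $\Phi(x)$ has rank at most $r$, hence the pencil $\Phi(x)$ can be written (over the field of rational functions, or by the Kronecker/WCF machinery) as a pencil of normal rank $\leq r$. The key classical fact I would invoke is the interlacing/domination result for the partial multiplicities of a regular pencil under an additive perturbation of rank $r$: if $L(\la)$ is regular with partial multiplicities $n_{i,1}\geq\cdots\geq n_{i,g_i}$ at $\la_i$ and $E(\la)$ has $\rank E\leq r$, then for \emph{every} such $E$ (not just generic ones) the perturbed pencil, if regular, has partial multiplicities at $\la_i$ that dominate the truncated list $(n_{i,r+1},\dots,n_{i,g_i})$. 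This is exactly the ``one-sided'' half of the main theorem of \cite{ddm,dd16}; the generic result says the dominating list is \emph{equal} to the truncation, but the domination itself holds unconditionally. So Part (1) is essentially a citation of (the easy direction of) the pencil low-rank perturbation theorem, applied eigenvalue by eigenvalue, with the minor bookkeeping that $\infty$ is handled by passing to $\rev$. No use of hypotheses (i) or (iii) is needed here, and no genericity.

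\textbf{Part (2): pinning down the generic behavior.}
Here the strategy is to apply Lemma~\ref{simp} with a suitably chosen analytic (in fact polynomial) family of matrices built from $L+\Phi(x)$. The standard device: since the entries of $L+\Phi(x)$ are polynomials in $x$, and regularity of a pencil is captured by non-vanishing of $\det(A(x)+\la B(x))$ as a polynomial in $\la$, the set of $x$ with $L+\Phi(x)$ regular is generic (it is nonempty by the hypothesis producing the $x_{0,i}$, and its complement is an algebraic set). On that set one can form, for each eigenvalue $\la_i$, a ``deflated'' matrix or characteristic-polynomial-coefficient vector whose eigenvalues/roots track the eigenvalues of $L+\Phi(x)$ near $\la_i$ — concretely, I would linearize the question by considering the coefficients of $\det(A(x)+\la B(x))$ (a polynomial map in $x$) and the resulting companion-type matrix $C(x)$, so that $C(0)$ has the spectrum of $L$. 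Conditions 1), 2), 3) of Lemma~\ref{simp} then translate: 1) is hypothesis (iii) (algebraic multiplicities are multiples of $\mu$); 2) is the bound $a_i^{(x)}\geq\widetilde a_i$ together with Part (1), giving a generic set where $a_i^{(x)}=\widetilde a_i$ is the \emph{maximal} possibility consistent with the lower bound — wait, rather: Part (1) already forces $a_i^{(x)}\geq n_{i,r+1}+\cdots+n_{i,g_i}$ always, and the generic-minimality direction we must extract from condition 3) / hypothesis on $x_{0,i}$; 3) is precisely the existence of the small perturbations $x_{0,i}$ realizing exactly $(a_i-\widetilde a_i)/\mu$ extra simple-in-the-$\mu$-sense eigenvalues. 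Lemma~\ref{simp} then yields an open dense subset of a small ball on which the number of new eigenvalues (counted appropriately) is exactly $\sum_i(a_i-\widetilde a_i)/\mu$, each of multiplicity $\mu$; intersecting with the generic regularity set and with the generic set from 2), and then using that ``open dense in a ball'' can be upgraded to ``generic in $\FF^m$'' because all the defining conditions are polynomial (non-vanishing of finitely many polynomials), we obtain the generic set $\cG$ satisfying (a), (b), (c). Finally, the last sentence: if $\widetilde a_i$ equals the truncated sum $n_{i,r+1}+\cdots+n_{i,g_i}$, then combining (b) (the algebraic multiplicity is exactly this sum) with the domination from Part (1) (each partial multiplicity $\eta_{i,j}\geq n_{i,j+r}$ and $\widetilde g_i\geq g_i-r$) forces equality at every position, since otherwise the $\eta$'s would sum to strictly more than $\widetilde a_i$; hence the partial multiplicities at $\la_i$ are exactly $n_{i,r+1},\dots,n_{i,g_i}$.

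\textbf{Main obstacle.}
I expect the technical heart to be the reduction of the pencil eigenvalue-tracking to a matrix-valued analytic family $C(x)$ to which Lemma~\ref{simp} literally applies — in particular making sure that (a) regularity is handled cleanly (the family is only defined, or only has the desired properties, where $L+\Phi(x)$ is regular, so one must work on the generic open set of such $x$ and argue that being ``open dense in a ball'' still gives a genuine generic set in the sense of Definition~\ref{gener.def}), and (b) the algebraic multiplicities of eigenvalues of the pencil — including possibly the eigenvalue $\infty$ — are faithfully represented as algebraic multiplicities of eigenvalues of $C(x)$, which requires either a homogeneous-form argument or a Cayley/Möbius shift moving $\infty$ to a finite point. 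The combinatorial step turning ``equal algebraic multiplicity plus domination'' into ``equal partial multiplicities'' is routine once Part (1) is in hand. Everything else is bookkeeping: translating hypotheses (i)--(iii) and the $x_{0,i}$ hypothesis into conditions 1)--3) of Lemma~\ref{simp}, and assembling the finitely many polynomial conditions whose simultaneous non-vanishing cuts out $\cG$.
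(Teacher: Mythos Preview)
Your proposal is correct and matches the paper's approach: Part~(1) is a direct citation of \cite[Lemma~2.1]{ddm}, and Part~(2) applies Lemma~\ref{simp} after a M\"obius shift removing $\infty$ as an eigenvalue, with the paper taking $C(x)=(B+\Phi_B(x))^{-1}(A+\Phi_A(x))$ rather than a companion matrix. For the step you hesitated on---producing the generic set where $a_i^{(x)}=\widetilde a_i$---the paper uses the matrix $Y_i(x)=\big((L+\Phi_x)(\lambda_i)\big)^n$: on the regular set its rank equals $n-a_i^{(x)}$, so maximal rank (a generic polynomial condition, witnessed by $x_{0,i}$) forces $a_i^{(x)}\leq\widetilde a_i$, hence equality by the standing lower-bound hypothesis; the Sylvester-resultant count of distinct roots then handles~(c) exactly as you outlined.
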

\proof In order to introduce the dependence on $\la$ in the pencil $\Phi(x)$, we denote $\Phi_x(\la):=\Phi(x)$ along the proof.
First of all, we may assume that $\infty$ is not an eigenvalue of $L(\lambda)$. Otherwise,
consider instead the pencil $\widehat L(\lambda)=A+\lambda\big(\alpha A+B\big)$, for some
$\alpha\in\,]0,1[$ such that $\infty$ is not an eigenvalue of $\widehat L(\lambda)$. Note that this
transformation only changes the eigenvalues, but not their corresponding multiplicities and their
behavior under perturbation when the perturbation pencil is adapted to
$\widehat \Phi_x(\la)=\Phi_A(x)+\lambda\big(\alpha \Phi_A(x)+ \Phi_B(x)\big)$.

Part (1) is a direct consequence of \cite[Lemma 2.1]{ddm} using the fact that the
rank of $ \Phi_x(\la)$ is at most $r$, for any $x\in\mathbb \FF^m$.

For part (2), we first show that the set
\[\cG_{\rm reg} =\{ x\in\mathbb \FF^m\mid (L+ \Phi_x)(\la)\mbox{ is regular}\}\] is a generic set.
To see this, let $z\in\mathbb C$ be a value which is not an eigenvalue of $L(\lambda)$.
Then $p(x):=\det\big((L+\Phi_x)(z)\big)$ is a polynomial in the entries of
$x$ that is not the zero polynomial. The set of pencils for which $ L+\Phi_{x}$ is singular
is then contained in the set of pencils for which $p(x)=0$, which by definition is an algebraic set.
Therefore, \cm{$\cG_{\rm reg}$} is generic.

Next, let $Y_i(x)$ be the matrix $Y_i(x)=\big( (L+\Phi_x)(\lambda_i)\big)^n$.
Then, by assumption, we have ${\rm rank\, } Y_i(x_{0,i})=n-\widetilde a_i$, for some $x_{0,i}\in\FF^m$, and
it follows from \cite[Lemma 2.1]{MehMRR11} that the set
\[
\cG_{i}:=\{x\in\FF^m\mid {\rm rank\, }Y_i(x)\geq n-\widetilde a_i\}
\]
is a generic set, for $i=1,\dots,\kappa$. On the set $\cG_i\cap\cG_{\rm reg}$ the condition
${\rm rank\, }Y_i(x)\geq n-\widetilde a_i$ is equivalent to $a_i^{(x)}\leq \widetilde a_i$, and since, by assumption,
the reverse inequality $a_i^{(x)}\geq\widetilde a_i$ holds for all $x\in\cG_{\rm reg}$, it follows that we have
$a_i^{(x)}= \widetilde a_i$ for all $x\in\cG_i\cap\cG_{\rm reg}$. Thus, setting
$\widetilde\cG:=\cG_{\rm reg}\cap\cG_1\cap\dots\cap\cG_\kappa$, we find that $\widetilde\cG$ is generic,
as being the intersection of finitely many generic sets, and for all $x\in\widetilde\cG$ the conditions (a) and (b)
are satisfied.

Finally, let $\chi_x( \la)$ denote the characteristic polynomial of $( L+\Phi_x)(\la)$. Then the number of
distinct roots of $\chi_x$ is given by
\[
{\rm rank}\, S\left(\chi_x,\frac{\partial\chi_x}{\partial  \la}\right) -n +1,
\]
where $S(p_1,p_2)$ denotes the Sylvester resultant matrix {\colb (see, for instance \cite[p. 290]{Barnett90})} of the two polynomials
$p_1(\la)$, $p_2(\la)$.
(Recall that $S(p_1,p_2)$ is a square matrix of size ${\rm deg}\, (p_1)+ {\rm deg}\, (p_2)$ and
that the rank deficiency of $S(p_1, p_2)$ coincides with the degree of the greatest
common divisor of the polynomials $p_1(\la)$ and $p_2(\la)$.)
Therefore, the set $\cG$ of all $x\in\widetilde \cG$ on which the number of distinct roots of $\chi(x)$ is maximal,
is a generic set. (Again this uses \cite[Lemma 2.1]{MehMRR11}, which states that the set where
a matrix depending on $x\in\CC^m$ has maximal rank is a generic set.)
If we can show that this maximal number is equal to $\kappa+\sum_{i=1}^\kappa\frac{1}{\mu}(a_i-\widetilde a_i)$,
then clearly (a)--(c) are satisfied for all $x\in\cG$.
To this end, observe that $P$ as a polynomial is an analytic function and that, by assumption,
$x_{0,i}$ can be chosen to be of arbitrarily small norm. Furthermore, for $\varepsilon_0>0$
sufficiently small, the continuity of $P$ guarantees that, for all $x\in\CC^m$
with $\|x\|\leq\varepsilon_0$, the perturbed pencil $(L+\Phi_x)(\lambda)$ is regular and does not have
$\infty$ as an eigenvalue. But then $B+\Phi_B$ is invertible and we can apply Lemma~\ref{simp}
to the matrix $(B+\Phi_B)^{-1}(A+\Phi_A)$ using the fact that matrix inversion is an analytic function
to prove that the maximal number of distinct roots of $\chi_x$ is as desired.

The additional part follows from the fact that the only list of partial multiplicities that both
dominates $(n_{i,r+1}, \ldots , n_{i,g})$ and has $a_i^{(x)}=n_{i,r+1}+\cdots+n_{i,g_i}$ is the list
$(n_{i,r+1}, \ldots , n_{i,g})$.
\hfill$\Box$

\medskip
The key consequence of Theorem~\ref{local} is the following: If we want to show that a pencil has a
particular behavior under perturbations, it is now enough to consider the pencil locally in the
following sense: it is sufficient to focus on a single eigenvalue
and construct examples of perturbations that provide the desired behavior for that particular
eigenvalue. We will use this strategy exhaustively in the following subsections.

\subsection{Revisiting the unstructured case}

In this subsection, we will briefly revisit the case of \cm{general matrix pencils (possibly without additional symmetry structures)}
and discuss their parameterizations from \cite{dd16}. This will not only give us an idea on how we can extend this
procedure to the case of structured pencils, but also allows us to strengthen the main result in \cite{dd16},
which only considered the generic change in the Weierstra{\ss} structure of regular matrix pencils under low-rank
perturbations, but did not discuss the multiplicity of newly generated eigenvalues.

As in \cite{dd16}, let us pick an integer $r\leq n$ and let us define for each $s=0,1,\dots,r$ the set
\[
\cC_s:=\left\{\begin{array}{c}v_1(\lambda)w_1(\lambda)^\top+\cdots+v_r(\lambda)w_r(\lambda)^\top\end{array}\left|\begin{array}{c}
v_1,\hdots,v_r,w_1,\dots,w_r\in\CC[\la]^n,\\
\deg v_i,\deg w_i\leq1,\ \mbox{\rm for $j=1,\hdots,r$,}\\
\deg v_1=\cdots=\deg v_s=0,\\
\deg w_{s+1}=\cdots=\deg w_r=0
\end{array}
\right\}\right.\,.
\]
Then using \cite[Lemma 2.8]{dd07} it was shown in \cite[Lemma 3.1]{dd16} that
\begin{equation}\label{30.8.18}
\mathbb P_r=\cC_0\cup\cC_1\cup\cdots\cup\cC_r,
\end{equation}
where $\mathbb P_r$ denotes the set of $n\times n$ matrix pencils with rank at most $r$.

\begin{remark}\label{rem:30.8.18}
It is important to note that the union in~\eqref{30.8.18} is not a partition, as the
sets $\cC_0,\cC_1,\dots,\cC_r$ are not disjoint. In particular, if $A\in\mathbb C^{n\times n}$
is a matrix of rank $r$, then the pencil $A=A+\lambda 0$ is contained in each $\cC_s$ for $s=0,\dots,r$.
\end{remark}
\begin{definition}\label{param.def.gen}{\rm (Parameterization of the set of pencils with
rank at most $r$)}. Let $r\in\mathbb N$. \cm{For each $s=0,1,\hdots,r$ we define the map $\Phi_s:\CC^{3rn}\longrightarrow\cC_s$
as follows: for $x\in\CC^{3rn}$ decomposed as $x=\left[\begin{array}{c|c|c|c}\alpha&\beta&\gamma&\delta
\end{array}\right]^\top$ with
\[
\begin{array}{ccc}
\alpha&=&\left[\begin{array}{ccc|c|ccc}\alpha_{11}&\cdots&\alpha_{n1}&\cdots&\alpha_{1r}&\cdots&\alpha_{nr}\end{array}\right]
\in\CC^{1\times rn},\\
\beta&=&\left[\begin{array}{ccc|c|ccc}\beta_{1,s+1}&\cdots&\beta_{n,s+1}&\cdots&\beta_{1r}&\cdots&\beta_{nr}
\end{array}\right]\in\CC^{1\times (r-s)n},\\
\gamma&=&\left[\begin{array}{ccc|c|ccc}\gamma_{11}&\cdots&\gamma_{n1}&\cdots&\gamma_{1r}&
\cdots&\gamma_{nr}\end{array}\right]\in\CC^{1\times r n},\\
\delta&=&\left[\begin{array}{ccc|c|ccc}\delta_{11}&\cdots&\delta_{n1}&\cdots&\delta_{1s}&
\cdots&\delta_{ns}\end{array}\right]\in\CC^{1\times s n},
\end{array}
\]
we set}
\[
\Phi_s(x)=v_1(\lambda)w_1(\lambda)^\top+\cdots+v_r(\lambda)w_r(\lambda)^\top,
\]
where $v_1,\dots,v_r,w_1,\dots,w_r$ are defined via
\[
\begin{array}{rclc}
v_i&=&\left[\begin{array}{ccc}\alpha_{1i}&\cdots&\alpha_{ni}\end{array}\right]^\top,&\mbox{for $i=1,\hdots,s$},\\
v_j&=&\left[\begin{array}{ccc}\alpha_{1j}+\lambda\beta_{1j}&\cdots&\alpha_{nj}+\lambda\beta_{nj}
\end{array}\right]^\top,&\mbox{for $j=s+1,\hdots,r$},\\
w_i&=&\left[\begin{array}{ccc}\gamma_{1i}+\lambda\delta_{1i}&\cdots&\gamma_{ni}+\lambda\delta_{ni}\end{array}\right]^\top,&
\mbox{for $i=1,\hdots,s$},\\
w_j&=&\left[\begin{array}{ccc}\gamma_{1j}&\cdots&\gamma_{nj}\end{array}\right]^\top,&\mbox{for $j=s+1,\hdots,r$}.
\end{array}
\]
\end{definition}
With this preparation, we are able to prove the following result, which extends the main
result from \cite{dd16} by adding a statement on the simplicity of newly generated eigenvalues.
\begin{theorem}\label{gen-main.th}{\rm (Generic change under low-rank perturbations of general regular matrix pencils).}
Let $L(\lambda)$ be a regular $n\times n$  matrix pencil and let $\lambda_1,\dots,\lambda_\kappa$
denote the pairwise distinct eigenvalues of $L(\lambda)$ having the partial multiplicities
$n_{i,1}\geq\dots\geq n_{i,g_i}>0$, for $i=1,\dots,\kappa$, respectively. Furthermore, let $r$ be a positive integer,
let $0\leq s\leq r$, and let $\Phi_s$ be the map in Definition {\rm\ref{param.def.gen}}.
Then, there exists a generic set $\cG_s$ in $\CC^{3rn}$ such that for all
$E(\la)\in\Phi_s(\cG_s)$, the perturbed pencil $L+E$ is regular and the partial multiplicities of
$L+E$ at $\la_i$ are given by $n_{i,r+1}\geq\cdots\geq n_{i,g_i}$.
(In particular, if $r\geq g_i$ then $\lambda_i$ is not an eigenvalue of $L+E$.)
Furthermore, all eigenvalues of $L+E$ that are different from those of $L$ are simple.
\end{theorem}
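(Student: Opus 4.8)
The plan is to verify that the parameterization $\Phi_s$ satisfies the hypotheses of Theorem~\ref{local} (with $\mu=1$) and then invoke that theorem together with the localization principle it provides. First I would check the structural hypotheses (i)--(iii): condition (i) holds because setting $x=0$ makes all $v_i,w_i$ zero; condition (ii) holds because $\Phi_s(x)$ is by construction a sum of $r$ rank-one pencils, hence has rank at most $r$; and condition (iii) holds trivially with $\mu=1$, since there is no symmetry restriction on the eigenvalue multiplicities in the unstructured case. The combination of \eqref{30.8.18} and Definition~\ref{param.def.gen} guarantees that $\Phi_s$ maps onto $\cC_s$, so that ranging $s$ over $0,\dots,r$ covers all of $\mathbb P_r$.

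Next I would establish the two ingredients needed for part (2) of Theorem~\ref{local}: a lower bound $\widetilde a_i$ on the algebraic multiplicity of each $\lambda_i$ valid for all regular $L+\Phi_s(x)$, and, for each $i$ and each $\varepsilon>0$, an explicit small perturbation $x_{0,i}$ achieving that bound with all new eigenvalues simple. The natural choice is $\widetilde a_i=n_{i,r+1}+\cdots+n_{i,g_i}$; the lower bound $a_i^{(x)}\ge\widetilde a_i$ for regular perturbations is exactly part (1) of Theorem~\ref{local} applied here (the dominance of partial-multiplicity lists forces the algebraic multiplicity not to drop below the tail sum). For the realizing perturbation I would use the classical construction behind the result of \cite{HorM94,ddm}: working in Weierstra{\ss} coordinates, add a rank-$r$ pencil that ``couples'' the $r$ largest Jordan chains at $\lambda_i$ to fresh directions, scaled by a small parameter $\varepsilon$, so that the $r$ largest partial multiplicities at $\lambda_i$ disappear and exactly $\sum_j \tfrac{1}{\mu}(a_j-\widetilde a_j)$ new eigenvalues are created, all simple. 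One can arrange this perturbation to lie in $\cC_s$ for the given $s$ (padding with constant or degree-one factors as the definition of $\cC_s$ requires), and to have norm below $\varepsilon$ by scaling; genericity of the parameter achieving simplicity of the new eigenvalues is automatic because simple spectrum is a Zariski-open condition on the coefficients.

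With these verified, Theorem~\ref{local}(2) delivers a generic set $\cG_s\subseteq\CC^{3rn}$ on which $L+\Phi_s(x)$ is regular, $a_i^{(x)}=\widetilde a_i$ for every $i$, and all eigenvalues of $L+\Phi_s(x)$ outside the spectrum of $L$ are simple. The ``additional part'' of Theorem~\ref{local}(2) then upgrades the equality of algebraic multiplicities to the full statement about partial multiplicities: since the list $(\eta_{i,1},\dots,\eta_{i,\widetilde g_i})$ dominates $(n_{i,r+1},\dots,n_{i,g_i})$ and has the same sum, it must coincide with it. Intersecting over $i=1,\dots,\kappa$ preserves genericity, and taking $\cG_s$ to be this intersection finishes the proof; the case $r\ge g_i$ is the special instance where the tail list is empty, so $\lambda_i$ ceases to be an eigenvalue.

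The main obstacle I expect is the explicit construction of the realizing perturbations $x_{0,i}$: one must simultaneously (a) destroy precisely the $r$ largest partial multiplicities at a chosen $\lambda_i$, (b) keep the pencil regular, (c) ensure the newly created eigenvalues are all simple and distinct from the old ones, and (d) stay within the prescribed component $\cC_s$ of the parameterization and within an arbitrarily small norm ball. Items (a)--(c) are the content of the unstructured low-rank perturbation theory and can be borrowed from \cite{ddm} after a change of variables to Weierstra{\ss} form, but fitting the construction into the rigid shape of $\cC_s$ (constant $v_1,\dots,v_s$ and constant $w_{s+1},\dots,w_r$) requires some care in choosing which Jordan chains are coupled by degree-zero versus degree-one vectors; this is the only place where genuine work beyond citing earlier results is needed.
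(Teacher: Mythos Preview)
Your proposal is correct and follows essentially the same route as the paper: invoke Theorem~\ref{local} with $\mu=1$, set $\widetilde a_i=n_{i,r+1}+\cdots+n_{i,g_i}$, and for each eigenvalue construct one explicit small perturbation realizing this algebraic multiplicity with all new eigenvalues simple. The paper carries this out by passing to WCF and perturbing each of the $r$ largest Jordan blocks $J_{n_j}(\lambda_i-\lambda)$ by the constant rank-one matrix $E_{n_j}(\psi_j)=\psi_j e_{n_j}e_1^\top$, whose characteristic polynomial is $(\lambda_i-\lambda)^{n_j}+(-1)^{n_j-1}\psi_j$; choosing the radii $|\psi_j|^{1/n_j}$ pairwise distinct and small gives simple new eigenvalues.

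The one point worth noting is that the obstacle you flag as ``the main obstacle'' --- fitting the construction into the specific component $\cC_s$ --- is in fact trivial, and the paper dispatches it in one line via Remark~\ref{rem:30.8.18}: the perturbation $E(\lambda)=\diag(E_{n_1}(\psi_1),\dots,E_{n_r}(\psi_r),0)$ is a \emph{constant} pencil of rank $r$, and any constant rank-$r$ pencil lies in $\cC_s$ for \emph{every} $s=0,\dots,r$ (take all degree-one parts to be zero). So no ``care in choosing which Jordan chains are coupled by degree-zero versus degree-one vectors'' is needed; the same constant perturbation works uniformly for all $s$.
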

\begin{proof}
By Theorem~\ref{local} it is sufficient to focus on a particular eigenvalue $\lambda_i$ and construct one particular
example $E=\Phi_s(x)$ of a pencil such that the partial multiplicities of $L+E$ are as claimed in the theorem
and such all eigenvalues that are different from those of $L$ are simple. For the moment, let us suppose that $\lambda_i$
is finite and, for simplicity, let us write $n_1\geq\cdots\geq n_g$ instead of $n_{i,1}\geq\dots\geq n_{i,g_i}$ for its
partial multiplicities. Since genericity of sets is preserved
under multiplication with invertible matrices, we may assume, without loss of generality, that $L$ is in WCF and has the form
\[
L(\lambda)=\diag\big(J_{n_1}(\lambda_i-\lambda),\dots,J_{n_g}(\lambda_i-\lambda),\widetilde L(\lambda)\big),
\]
where $\widetilde L(\lambda)$ consists of all the blocks associated with eigenvalues different from $\lambda_i$.
As in the proof of \cite[Theorem 3.4]{dd16}, let $E_k(\psi)$ be the $k\times k$ matrix that is zero everywhere except
for the $(k,1)$-entry which takes the value $\psi\in\mathbb C$. Then it is straightforward to check that the pencil
$J_m(\lambda_i-\lambda)+E_m(\psi)$ has determinant equal to $\chi(\lambda)=(\lambda_i-\lambda)^m+(-1)^{m-1}\psi$,
i.e., its eigenvalues lie on a circle centered around $\lambda_i$ with radius $|\psi|^{\frac{1}{m}}$. Thus, consider the $n\times n$ pencil
\[
E(\lambda)=\diag\big(E_{n_1}(\psi_1),\dots,E_{n_r}(\psi_r),0).
\]
Then $E(\lambda)$ is a constant pencil of rank $r$ and hence, by Remark~\ref{rem:30.8.18}, there exists $x\in\CC^{3rn}$
such that $E(\lambda)=\Phi_s(x)$. Moreover, we find that $L+E$ has the partial multiplicities $n_{r+1}\geq\cdots\geq n_{g}$ at
$\lambda_i$. Furthermore, having chosen the values $\psi_1,\dots,\psi_r\in\mathbb C$ appropriately such that all radii
$|\psi_j|^{\frac{1}{n_j}}$ are pairwise distinct and smaller than the distance of $\lambda_i$ to the spectrum of $\widetilde L(\lambda)$,
we can guarantee that all eigenvalues of $L+E$ that are different from those of $L$ are simple. Finally, by also choosing
$\psi_1,\dots,\psi_r$ to be of sufficiently small modulus, we can guarantee that the norm of $x$ is arbitrarily small. This
gives the desired example.
For the case $\lambda_i=\infty$ consider the reversal of the pencil $L(\lambda)$ and apply the result for the already proved case
$\lambda_i=0$.
\end{proof}

\subsection{Parameterization of low-rank structured matrix pencils}\label{param.sec}

In this subsection, we finally consider the generic change in the Weierstra{\ss} structure of structured
matrix pencils under structure-preserving low-rank perturbations.
Following the procedure in \cite{dd16}, we first look for a parameterization of the set of $n\times n$
structured matrix pencils with rank at most $r$, for any of the structures considered in Section~\ref{rank1.sec}.
Such a parameterization comes naturally from the decomposition into a sum of rank-$1$ pencils provided
in that section. More precisely, we decompose the set of $n\times n$ structured matrix pencils as the
union of subsets given by fixing the value of the parameter $s$ in Theorems \ref{rank1-herm.th}, \ref{rank1-sym.th},
\ref{rank1-sksym.th}--\ref{rank1-starpal.th}, and~\ref{rank1-starantipal.th}.
Again, we will use the Hermitian case as a model for other structures. Thus, while the Hermitian case
will be presented in full detail, we only give a brief remark on how other structures have to be dealt with
whenever this is necessary, with one exception: we will add a bit more details in the case of $\top$-even pencils,
because the effect of structure-preserving low-rank perturbation needs a more detailed discussion for
this structure and related ones. Thus, the set of $\top$-even pencils will be a subordinate case.

For the Hermitian structure, the decomposition outlined in the previous paragraph is as follows.
For each $0\leq s\leq\lfloor r/2\rfloor$, let us define
\[
\cC_s^\H:=\left\{\begin{array}{c}(a_1+\la b_1)u_1u_1^*+\cdots+(a_\ell+\la b_\ell)u_\ell u_\ell^*\\
+v_1w_1^*+\cdots+v_sw_s^*+w_1v_1^*+\cdots+w_sv_s^*\end{array}\left|\begin{array}{c}
\ell=r-2s,\\
u_1,\hdots,u_\ell\in\CC^n,\\
v_1,\hdots,v_s\in\CC^n,\\
w_1,\hdots,w_s\in\CC[\la]^n,\\
\deg w_j\leq1,\ \mbox{\rm for $j=1,\hdots,s$,}\\
a_i,b_i\in\RR,\  \mbox{\rm for $i=1,\hdots,\ell$}
\end{array}
\right\}\right.\,.
\]
Then, Theorem \ref{rank1-herm.th} states that
\begin{equation}\label{herm-decomp}
\H_r=\cC_0^\H\cup\cC_1^\H\cup\cdots\cup\cC_{\lfloor r/2\rfloor}^\H.
\end{equation}
We emphasize that, as in the general case without particular structure,
the decomposition \eqref{herm-decomp} is not a partition, since the sets
$\cC_i^\H$ are not disjoint.

The case of the structures $Sym_r,S\H_r,Even_r^*,Odd_r^*,Pal_r^*,$ and $Apal_r^*$ is similar and the
decomposition is obtained through the same number of subsets as in \eqref{herm-decomp}, using
\eqref{sym-rank1}, \eqref{sk-rank1}, \eqref{*even-rank1}, \eqref{*odd-rank1}, \eqref{*pal-rank1},
and~\eqref{*antipal-rank1}, respectively, and replacing $*$ by $\top$ and allowing $a_i,b_i\in\CC$
for the case $Sym_r$.

For the remaining structures $SSym_r$, $Even^\top_r$, $Odd^\top_r$, $Pal^\top_r$, and $Apal^\top_r$,
we also have to replace $*$ by $\top$ and allow $a_i,b_i\in\CC$. In addition, the decomposition of the set of
structured matrices of rank $r$ consists of only one set, since the value of $s$ is fixed by $s=r/2$
if $r$ is even, or by $s=(r-1)/2$ if $r$ is odd.

Next, we introduce a parameterization for the sets of $n\times n$ structured matrix pencils
with rank at most $r$ by introducing a parameterization for each of the subsets that give rise
to the decompositions above.

\begin{definition}\label{param.def}{\rm (Parameterization of the set of Hermitian matrix pencils with
rank at most $r$)}. $\;$\\
Let $r\in\mathbb N$. \cm{For each $s=0,1,\hdots,\lfloor r/2\rfloor$ we define the map
$\Phi_s:\RR^{2\ell}\times\CC^{(r+s)n}\longrightarrow\cC_s^\H$ with $\ell=r-2s$ as follows:
For $x\in\CC^{(r+s)n}$ decomposed as $x=\left[\begin{array}{c|c|c|c}\alpha&\beta&\gamma&\delta
\end{array}\right]^\top$ with
\[
\begin{array}{ccc}
\alpha&=&\left[\begin{array}{ccc|c|ccc}\alpha_{11}&\cdots&\alpha_{n1}&\cdots&\alpha_{1\ell}&\cdots&\alpha_{n\ell}\end{array}\right]
\in\CC^{1\times \ell n},\\
\beta&=&\left[\begin{array}{ccc|c|ccc}\beta_{11}&\cdots&\beta_{n1}&\cdots&\beta_{1s}&\cdots&\beta_{ns}
\end{array}\right]\in\CC^{1\times sn},\\
\gamma&=&\left[\begin{array}{ccc|c|ccc}\gamma_{11}&\cdots&\gamma_{n1}&\cdots&\gamma_{1s}&
\cdots&\gamma_{ns}\end{array}\right]\in\CC^{1\times s n},\\
\delta&=&\left[\begin{array}{ccc|c|ccc}\delta_{11}&\cdots&\delta_{n1}&\cdots&\delta_{1s}&
\cdots&\delta_{ns}\end{array}\right]\in\CC^{1\times s n},
\end{array}
\]
we set
\begin{eqnarray*}
&&\Phi_s\left(\left[\begin{array}{ccccc}a_1&b_1&\cdots&a_\ell&b_\ell\end{array}\right]^\top\!\!,\, x\right)\\
&&\qquad=(a_1+\la b_1)u_1u_1^*+\cdots+(a_\ell+\la b_\ell)u_\ell u_\ell^*+v_1w_1^*+\cdots+v_sw_s^*
+w_1v_1^*+\cdots+w_sv_s^*,
\end{eqnarray*}
where $u_1,\dots,u_\ell,v_1,\dots,v_s,w_1,\dots,w_s$ are defined by}
\[
\begin{array}{rclc}
u_i&=&\left[\begin{array}{ccc}\alpha_{1i}&\cdots&\alpha_{ni}\end{array}\right]^\top,&\mbox{for $i=1,\hdots,\ell$},\\
v_j&=&\left[\begin{array}{ccc}\beta_{1j}&\cdots&\beta_{nj}\end{array}\right]^\top,&\mbox{for $j=1,\hdots,s$},\\
\mbox{and}\quad w_j&=&\left[\begin{array}{ccc}\gamma_{1j}+\lambda\delta_{1j}&\cdots&\gamma_{nj}+
\la \delta_{nj}\end{array}\right]^\top,&\mbox{for $j=1,\hdots,s$}.
\end{array}
\]
\end{definition}
\begin{remark}\label{param.rem}
For the other structures, the parameterization is defined analogously. More precisely, let $\sym_r$ be the set
of $n\times n$ matrix pencils with rank at most $r$ having the structure $\sym$ and assume that
$\sym_r=\cC_{i_1}^{\sym}\cup\hdots\cup\cC_{i_k}^\sym$ is a decomposition {\colb into smaller subsets}, where the number $k$
depends on the structure and on $r$. Then the parameterization of $\sym_r$ is a tuple
of continuous, surjective maps $\Phi_s:\RR^{p_s}\times\CC^{m_s}\longrightarrow\cC_s^\sym$, {\colb for $s\in\{i_1,\hdots,i_k\}$,}
and {\colb where} $p_s,m_s$ depend on $s$. (In fact, these parameterizations are not only continuous, but are
polynomials either in the entries of $x$ or in the real and imaginary parts of the entries of $x$.)

For the Hermitian, skew-Hermitian, $*$-even, $*$-odd, $*$-palindromic, and
$*$-anti-palindromic structures, we have $k=\lfloor r/2\rfloor+1$, $\{i_1,\hdots,i_k\}=\{0,1,\hdots,\lfloor r/2\rfloor\}$,
$p_s=2(r-2s)$, and $m_s=(r+s)n$, while for the symmetric structure, we have $k=\lfloor r/2\rfloor+1$,
$\{i_1,\hdots,i_k\}=\{0,1,\hdots,\lfloor r/2\rfloor\}$, $p_s=0$, and $m_s=2(r-2s)+(r+s)n$.

In the remaining structures, we have $k=1,s=\lfloor r/2\rfloor$, $p_s=0$, and
$m_s=\lfloor 3r/2\rfloor n$. For example, for the case of $\top$-even pencils,
the map
\begin{equation}\label{phione}
\Phi:\CC^{\lfloor\frac{3r}{2}\rfloor n}\longrightarrow Even^\top_r
\end{equation}
is defined by $\Phi(x)=E(\la)$, with $E(\la)$ as in \eqref{teven-rank1}, and
where $u,v_j,w_j$, for $j=1,\hdots,\lfloor r/2\rfloor$, are defined as follows: if
$x\in\CC^{\lfloor 3r/2\rfloor n}$ is decomposed as
$x=\left[\begin{array}{c|c|c|c}
\alpha&\beta&\gamma&\delta
\end{array}\right]^\top,$
where
\[
\begin{array}{ccl}
\alpha&=&\left[\begin{array}{ccc}\alpha_1&\cdots&\alpha_{\ell n}\end{array}\right]\in\CC^{1\times\ell n},\\
\beta&=&\left[\begin{array}{ccc|c|ccc}\beta_{11}&\cdots&\beta_{n1}&\cdots&\beta_{1, \lfloor r/2\rfloor n}
&\cdots&\beta_{n, \lfloor r/2\rfloor n}\end{array}\right]\in\CC^{1\times \lfloor r/2\rfloor n},\\
\gamma&=&\left[\begin{array}{ccc|c|ccc}\gamma_{11}&\cdots&\gamma_{n1}&\cdots&\gamma_{1, \lfloor r/2\rfloor n}
&\cdots&\gamma_{n, \lfloor r/2\rfloor n}\end{array}\right]\in\CC^{1\times  \lfloor r/2\rfloor n},\\
\delta&=&\left[\begin{array}{ccc|c|ccc}\delta_{11}&\cdots&\delta_{n1}&\cdots&\delta_{1, \lfloor r/2\rfloor n}
&\cdots&\delta_{n, \lfloor r/2\rfloor n}\end{array}\right]\in\CC^{1\times \lfloor r/2\rfloor n},
\end{array}
\]
with $\ell=r-2\lfloor r/2\rfloor$, then
\[
\begin{array}{cclc}
u&=&\left[\begin{array}{ccc}\alpha_{1}&\cdots&\alpha_{\cm{\ell n}}\end{array}\right]^\top,&\\
v_j&=&\left[\begin{array}{ccc}\beta_{1j}&\cdots&\beta_{nj}\end{array}\right]^\top,&\mbox{for $j=1,\hdots,\lfloor r/2\rfloor$},\\
w_j&=&\left[\begin{array}{ccc}\gamma_{1j}+\lambda\delta_{1j}&\cdots&\gamma_{nj}+\la \delta_{nj}\end{array}\right]^\top,
&\mbox{for $j=1,\hdots,\lfloor r/2\rfloor$}.
\end{array}
\]
Note that $\alpha$ is void if $r$ is even, because we then have $\ell=0$.

{\colb We highlight that, in all cases, the map $\Phi_s$ is surjective.}
\end{remark}
%

\subsection{Generic perturbation theory for pencils with symmetry structures}\label{pert.sec}

\mycomment{
Our main result strongly relies on the following known theorem, that we explicitly restate here due to its relevance in our developments.
\begin{theorem}\label{atleast.th}{\rm \cite[Lemma 2.1]{ddm}}
Let $L(\la) $ be a complex regular square pencil, and let
$E(\la) $ be another complex pencil of the same dimension with rank at most
$r$. Let $\la_0$ be an eigenvalue of $L(\la)$ with $g$ nonzero partial multiplicities $d_1 \geq \cdots \geq d_g$.
If $(L + E)(\la)$ is also regular and $r \leq g$, then in $(L + E)(\la)$ there are at least $g-r$ nonzero
partial multiplicities associated with $\la_0$, namely $\beta_{r+1} \geq \cdots \geq \beta_g$, such that
$\beta_i\geq d_i$, for $r + 1 \leq i \leq g$.
\end{theorem}
Given an eigenvalue $\la_0$ of the unperturbed pencil $L(\la)$ and a perturbation pencil $E(\la)$ with
$\rank E=r$, Theorem \ref{atleast.th} guarantees the existence of, at least, $g-r$ Jordan blocks associated
with $\la_0$ in the WCF of the perturbed pencil $(L+E)(\la)$. These blocks have sizes which are, at least,
the sizes of the smallest $g-r$ Jordan blocks in the WCF of $L(\la)$ associated with $\la_0$.
Note that this is valid for $\la_0$ being either a finite or an infinite eigenvalue.
In this section, we want to prove that, for most of the structures we consider in this paper,
these are precisely the Jordan blocks in the WCF of $L+E$ if a generic structure-preserving
perturbation pencil $E(\la)$ of $\rank E=r$ is added.
The only structures where this is not the case correspond to particular cases of the $\top$-alternating,
the $\top$-palindromic, and the $\top$-anti-palindromic structures (see Tables \ref{alter.table}
and \ref{tpal.table}). It is interesting to notice that the generic change in these last cases does not
coincide with the generic behavior when general perturbation pencils $E(\lambda)$ are applied that do
not preserve the particular structure of the pencil $L(\lambda)$.
}

In this subsection, we will develop the eigenvalue perturbation theory of regular matrix pencils
with symmetry structures under structure-preserving perturbations with the help of the
parameterizations from Section~\ref{param.sec}.
The sets of the form $\mathbb R^{p_s}\times\mathbb C^{m_s}$ that appear as domains for
the parameterizations constructed analogous to Definition~\ref{param.def} will be identified
with the set $\mathbb R^{p_s+2m_s}$ by splitting the variables in $\mathbb C$ into their
real and imaginary parts. \vm {As noted before, this detour via the reals is necessary when symmetry structures involving complex conjugation are considered.}
When we deal with symmetry structures only involving the complex transpose, but not complex conjugation,
then we have $p_s=0$ and \cm{we can express genericity in terms of complex polynomials only.}
\begin{theorem}\label{herm-main.th}{\rm (Generic change under low-rank perturbations of Hermitian pencils).}
Let $L(\lambda)$ be a regular $n\times n$ Hermitian matrix pencil and let $\lambda_1,\dots,\lambda_\kappa$
denote the pairwise distinct eigenvalues of $L(\lambda)$ having the partial multiplicities
$n_{i,1}\geq\dots\geq n_{i,g_i}>0$ for $i=1,\dots,\kappa$, respectively. Furthermore, let $r$ be a positive integer,
let $0\leq s\leq\lfloor r/2\rfloor$, and let $\Phi_s$ be the map in Definition {\rm\ref{param.def}}
and $\ell=r-2s$.
Then, there exists a generic set $\cG_s$ in $\mathbb R^\cm{2\ell}\times\CC^{(r+s)n}$ such that, for all
$E(\la)\in\Phi_s(\cG_s)$, the perturbed pencil $L+E$ is regular and the partial multiplicities of
$L+E$ at $\la_i$ are given by $n_{i,r+1}\geq\cdots\geq n_{i,g_i}$.
(In particular, if $r\geq g_i$ then $\lambda_i$ is not an eigenvalue of $L+E$.)
Furthermore, all eigenvalues of $L+E$ that are different from those of $L$ are simple.
\end{theorem}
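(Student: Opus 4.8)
The plan is to derive the theorem from the localization result, Theorem~\ref{local}, applied with $\mu=1$. First I would identify the domain $\RR^{2\ell}\times\CC^{(r+s)n}$ of $\Phi_s$ with $\RR^{m}$, $m=2\ell+2(r+s)n$, by taking real and imaginary parts of the complex parameters; then $\Phi_s$ becomes a polynomial map into the space of coefficient pairs of $n\times n$ pencils, because every entry of the building blocks $u_iu_i^*$, $v_jw_j^*$, $w_jv_j^*$ is a polynomial in those real parameters (recall $*$ leaves $\la$ untouched). Conditions (i)--(iii) of Theorem~\ref{local} hold by construction: $\Phi_s(0)=0$, $\rank\Phi_s(x)\le\ell+2s=r$, and with $\mu=1$ condition (iii) is vacuous. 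Part~(1) of Theorem~\ref{local} then tells us that whenever $L+\Phi_s(x)$ is regular, the partial multiplicities of $L+\Phi_s(x)$ at $\la_i$ dominate $(n_{i,r+1},\dots,n_{i,g_i})$, so in particular the algebraic multiplicity $a_i^{(x)}$ of $\la_i$ satisfies $a_i^{(x)}\ge\widetilde a_i:=n_{i,r+1}+\cdots+n_{i,g_i}$. In view of the ``additional part'' of Theorem~\ref{local}(2), it then suffices to establish the following local claim: for each $i$ and each $\varepsilon>0$ there is a parameter $x_{0,i}$ with $\|x_{0,i}\|<\varepsilon$ such that $L+\Phi_s(x_{0,i})$ is regular, $\la_i$ has algebraic multiplicity exactly $\widetilde a_i$ in it, and all of its eigenvalues that differ from those of $L$ are simple. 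Theorem~\ref{local}(2) would then produce the asserted generic set $\cG_s$.

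To prove this claim I would first reduce to a canonical situation. Since $*$-congruence $L\mapsto P^*LP$ preserves eigenvalues and partial multiplicities, and since $P^*\Phi_s(x)P=\Phi_s(\tau(x))$ for an invertible $\RR$-linear map $\tau$ of the realified parameter space (so that preimages of generic sets under $\tau$ are again generic), I may assume $L$ is in the Hermitian canonical form of Theorem~\ref{thm:canformherm}; being regular it has no type iv) blocks. If $\la_i=\infty$, I would pass to $\rev L$, which is again Hermitian and on which $\rev$ induces a polynomial bijection of each $\cC_s^\H$ (using Remark~\ref{reversal.rem} together with $\rev((a+\la b)uu^*)=(b+\la a)uu^*$); this turns $\infty$ into the finite eigenvalue $0$, so I may assume $\la_i$ finite. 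Write $n_1\ge\cdots\ge n_g$ for the partial multiplicities at $\la_i$ and $B_1,\dots,B_g$ for the corresponding canonical blocks, where $B_j=\sigma_jRJ_{n_j}(\la_i-\la)$ if $\la_i\in\RR$, and $B_j=R\,\diag(J_{n_j}(\overline{\la_i}-\la),J_{n_j}(\la_i-\la))$ if $\la_i\notin\RR$ (in which case $\overline{\la_i}$ carries the same partial multiplicities); let $\widetilde L$ be the direct sum of all blocks of $L$ associated with eigenvalues different from $\la_i$ (and from $\overline{\la_i}$ in the nonreal case), so that $\operatorname{dist}(\la_i,\operatorname{spec}(\widetilde L))>0$. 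The perturbation $E=\Phi_s(x_{0,i})$ will be supported only on $B_1,\dots,B_t$ with $t=\min\{r,g\}$, all other parameters being set to $0$.

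The core step is the construction of $E$. Since $\ell+2s=r\ge t$, I can destroy the $t$ chosen blocks using at most $s$ of the paired summands $v_jw_j^*+w_jv_j^*$ (each handling two blocks) together with at most $\ell$ of the rank-one summands $(a+\la b)uu^*$ (each handling one block). For a single block $B_j$ with $\la_i\in\RR$ I would use $\psi_j\,e_1^{(j)}(e_1^{(j)})^*$ with small $\psi_j>0$: passing to $R\cdot B_j$, a short computation gives $\det(B_j+\psi_j e_1^{(j)}(e_1^{(j)})^*)=\pm(\sigma_j^{n_j}(\la_i-\la)^{n_j}+c_j\psi_j)$ with $c_j\ne0$, so this block is replaced by $n_j$ distinct eigenvalues on a circle of radius $\sim\psi_j^{1/n_j}$ about $\la_i$, none equal to $\la_i$. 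For a pair $B_j,B_{j'}$ with $\la_i\in\RR$ I would use $v=e_1^{(j)}$, $w=\psi\,e_1^{(j')}$ (both constant); forming the Schur complement with respect to $B_j$ gives $\det(\cdot)=\pm(\sigma_j^{n_j}\sigma_{j'}^{n_{j'}}(\la_i-\la)^{n_j+n_{j'}}+c\,|\psi|^2)$ with $c\ne0$, so the pair is replaced by $n_j+n_{j'}$ distinct eigenvalues near $\la_i$, none equal to $\la_i$ (and, after balancing, $v$ and $w$ can both be taken of norm $\sqrt{|\psi|}$). For $\la_i\notin\RR$ I would argue similarly from $\det(B_j+\psi_j z_jz_j^*)=\det B_j+\psi_j\,z_j^*\operatorname{adj}(B_j)z_j$, choosing the constant vector $z_j$ (supported on $B_j$) so that $z_j^*\operatorname{adj}(B_j)z_j$ vanishes neither at $\la_i$ nor at $\overline{\la_i}$; then $B_j$ breaks into $n_j$ simple eigenvalues near $\la_i$ and $n_j$ near $\overline{\la_i}$, and a paired summand couples two type iii) blocks in the same fashion. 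Choosing all the scalars positive, with pairwise distinct $n_\bullet$-th roots and smaller than $\operatorname{dist}(\la_i,\operatorname{spec}(\widetilde L))$, and splitting the magnitudes between the scalar coefficients and the vectors, I obtain $E=\Phi_s(x_{0,i})$ with $\|x_{0,i}\|<\varepsilon$ for which $L+E$ is regular, $\la_i$ has algebraic multiplicity exactly $\widetilde a_i$ in $L+E$ (it survives only through the untouched blocks $B_{t+1},\dots,B_g$), and all newly created eigenvalues are simple and distinct from the spectrum of $L$. This is the required example, and Theorem~\ref{local}(2) then completes the proof; the set $\cG_s$ is the generic set it provides.

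The main obstacle I anticipate is exactly the construction in the last paragraph: showing that the coupled-pair perturbation and, above all, the rank-one Hermitian perturbation of a type iii) ``double block'' really do split the block into the prescribed number of simple eigenvalues. This rests on the determinant and adjugate identities indicated above, and on checking that the scalars and vectors used can be chosen consistently with the structure of $\cC_s^\H$ for the given value of $s$ --- which is precisely what $\ell+2s=r$ guarantees. Everything else (the reduction to canonical form, the $\infty$-versus-$0$ switch, and the assembly of the generic set) is routine once Theorem~\ref{local} is available.
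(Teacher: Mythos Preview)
Your proposal is correct and follows essentially the same route as the paper: reduce to Theorem~\ref{local} with $\mu=1$, pass to the Hermitian canonical form, and for each eigenvalue build an explicit small Hermitian perturbation in $\cC_s^{\H}$ that kills the $r$ largest blocks while creating only simple new eigenvalues. The paper's concrete building blocks $F_\nu=e_1e_1^*$ and $G_{\nu,\widetilde\nu}=e_{\widetilde\nu+1}(\tfrac12 e_1)^*+(\tfrac12 e_1)e_{\widetilde\nu+1}^*$ (and their type~iii) analogues $\widetilde F_{2\nu}$, $\widetilde G_{2\nu,2\widetilde\nu}$ with $u=e_1+e_{\nu+1}$) are exactly the perturbations you sketch, and the paper's explicit characteristic-polynomial formulas confirm the ``roots on a circle'' picture you describe for real eigenvalues.

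Two places where the paper is more careful than your sketch are worth noting. First, for nonreal $\lambda_i$ you argue via $\det(B_j+\psi_j z_jz_j^*)=\det B_j+\psi_j\,z_j^*\operatorname{adj}(B_j)z_j$ and a genericity choice of $z_j$; the paper instead fixes $z_j=e_1+e_{n_j+1}$ and computes the characteristic polynomial explicitly, then invokes \cite[Example~4.2]{MehMRR12} for simplicity of its roots (and gives a separate contradiction argument for the paired $\widetilde G$ block). Your adjugate approach does work, but the simplicity claim needs the Puiseux/leading-order argument you only hint at; the paper's explicit computation avoids that. Second, when $g<r$ and $g-\ell$ is odd you cannot allocate the $g$ blocks as ``$\ell$ singletons plus pairs''; the paper handles this by observing that a paired summand can also act on a single block via $\widetilde v=\widetilde w=\tfrac12 e_1$, which your phrase ``each handling two blocks'' suppresses. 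These are refinements rather than gaps in your strategy.
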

\begin{proof}
By Theorem~\ref{local} \cm{(applied for the case $\mathbb F=\mathbb R$ and $m=2\ell+2(r+s)n$ in accordance with
the identification $\mathbb R^{2\ell+2(r+s)n}=\mathbb R^{2\ell}\times\mathbb C^{(r+s)n}$)}
it is sufficient to show, for each $i=1,\dots,\kappa$, the existence of
one particular $x_i\in\mathbb R^\cm{2\ell}\times\mathbb C^{(r+s)n}$ of arbitrarily small norm such that, with
the corresponding perturbation pencil $E(\la)=\Phi_s(x_i)$,
the perturbed pencil $L+E$ has precisely the partial multiplicities $n_{i,r+1}\geq\cdots\geq n_{i,g_i}$ at $\la_0$ and all
eigenvalues of $L+E$ that are different from those of $L$ are simple.
Since genericity of sets is invariant under multiplication with invertible matrices, it suffices
to consider the case when $L$ is given in Hermitian canonical form (Theorem \ref{thm:canformherm}). 
To this end, we distinguish three cases and for the ease of notation we will from now on
drop the dependence on $i$ of the geometric multiplicity and partial multiplicities of $\lambda_i$, thus writing $g$ and
$n_1,\dots,n_g$ instead of $g_i$ and $n_{i,1},\dots,n_{i,g_i}$.

\emph{Case (1)}: $\la_i\in\RR$. Then we can assume, without loss of generality, that $L$ is of the form
\[
L(\la)=\diag\big(\sigma_1RJ_{n_1}(\lambda_i-\la),\hdots,\sigma_gRJ_{n_g}(\lambda_i-\la),\widetilde L(\la)\big),
\]
where $\lambda_i$ is not an eigenvalue of $\widetilde L(\la )$.
Let $F_\nu=\widetilde u\widetilde u^*$, with $\widetilde u=e_1\in\mathbb C^\nu$, and
$G_{\nu,\widetilde \nu}=\widetilde v\widetilde w^*+\widetilde w\widetilde v^*$, with
$\widetilde v=e_{\widetilde \nu+1},\widetilde w=\frac{1}{2}e_1\in\mathbb C^{\nu+\widetilde \nu}$, i.e.,
$F_\nu$ is the $\nu\times\nu$ matrix
that is everywhere zero except for $F_\nu(1,1)=1$, and $G_{\nu,\widetilde \nu}$ is the
$(\nu+\widetilde \nu)\times(\nu+\widetilde \nu)$ matrix which is
everywhere zero except for $G_{\nu,\widetilde \nu}(1,\widetilde \nu+1)=G_{\nu,\widetilde \nu}(\widetilde
\nu+1,1)=1$. Note that both $F_\nu$ and $G_{\nu,\widetilde \nu}$ are
Hermitian matrices.

First, let us assume that $r\leq g$. Then, \cm{we set}
\begin{equation}\label{eq:25.7.18}
E(\la)=\diag(\alpha_1F_{n_1},\hdots,\alpha_\ell F_{n_\ell},
\beta_1G_{n_{\ell+1},n_{\ell+2}},\hdots,\beta_sG_{n_{r-1},n_{r}},0)+\la 0_{n\times n}
\end{equation}
for some values $\alpha_1,\dots,\alpha_\ell,\beta_1,\dots,\beta_s\in\mathbb R$ to be specified later.
The matrix pencil $E(\la)$ has rank $r$ and, from the construction of $F_m$ and
$G_{m,\widetilde m}$, it is clear that $E(\lambda)$ can be written in the form~\eqref{herm-rank1}
(\cm{e.g.,} with $a_1=\hdots=a_\ell=1,b_1=\hdots=b_\ell=0$). Thus, we have
$E(\la)\in\cC_s^\H$. Then, since $\Phi_s$ is surjective, there exists some \cm{$x\in\mathbb R^{2\ell}\times\CC^{(r+s)n}$} such that
$\Phi_s(x)=E(\la)$, \cm{and provided that the parameters $\alpha_i,\beta_j$ are sufficiently small, it is clear that this
$x$ can be chosen to be of arbitrarily small norm. (This uses the fact that  $\Phi_s$ is not injective, i.e.,
we can ``split up'' the small values $\alpha_i,\beta_j$ and put them into the parameters $a_i,b_i,u_j,v_k,w_k$ of Definition~\ref{param.def}
in such a way that all entries of $x$ are small.)} Moreover, the nonzero partial multiplicities of $L+E$ at $\lambda_i$ are
$(n_{r+1},\hdots, n_g)$. To see this, note first that only the first $r$ blocks of $L$ are modified so, in particular, $L+E$
contains $g-r$ Jordan blocks associated with $\lambda_i$ with sizes $(n_{r+1},\hdots, n_g)$.
(If $g=r$, then this means that $\lambda_i$ is not an eigenvalue of $L+E$.)
Furthermore, the part of the pencil $L+E$ corresponding {\colb to} the first $r$ blocks of $L$ is block diagonal,
and with the help of the Laplace expansion it is easy to verify that the characteristic
polynomials of its diagonal blocks $RJ_{n_j}(\lambda_i-\lambda)+\alpha_jF_{n_j}$, $j=1,\dots,\ell$, and
$\diag(RJ_{n_{\ell+2j-1}}(\lambda_i-\lambda),RJ_{n_{\ell+2j}}(\lambda_i-\lambda))+\beta_jG_{n_{\ell+2j-1},n_{\ell+2j}}$, for
$j=1,\dots,s$, are given by
\[
(-1)^{\varrho_j}\big((\lambda-\lambda_i)^{n_j}-\alpha_j\big),\;j=1,\dots,\ell\quad\mbox{and}\quad
(-1)^{\varrho_{\ell+j}}\big((\lambda-\lambda_i)^{n_{\ell+2j-1}+n_{\ell+2j}}-\beta_j^2\big),\;j=1,\dots,s,
\]
respectively,
where $\varrho_1,\dots,\varrho_{\ell+s}$ are integers only depending on the sizes $n_1,\dots,n_r$
and the signs $\sigma_1,\dots,\sigma_r$.
Thus, the eigenvalues of this diagonal blocks lie on circles centered around $\lambda_i$ with
radii $|\alpha_1|^{\frac{1}{n_1}},\dots,|\alpha_\ell|^{\frac{1}{n_\ell}}$,
$|\beta_1|^{\frac{2}{n_{\ell+1}+n_{\ell+2}}},\dots,|\beta_s|^{\frac{2}{n_{r-1}+n_{r}}}$.
Clearly, choosing the parameters $\alpha_1,\dots,\alpha_\ell$ and $\beta_1,\dots,\beta_s$ appropriately,
we can guarantee that all eigenvalues of $L+E$ that are different from those of $L$ are simple.

Now assume that $g<r$. If $g\leq \ell$ or if $g$ has the same parity as $\ell$ (i.e. $g-\ell$ is even)
then we define $E(\lambda)$ as in~\eqref{eq:25.7.18}, where we interpret $n_j=0$ for $j>g$.
Then $E(\lambda)$ has rank less than $r$, but still can be written in the form~\eqref{herm-rank1}.
Indeed, if $g\leq\ell$ then we set $u_i=0$ for $i>g$ and $v_j=w_j=0$ for $j=1,\dots,s$,
and if $g>\ell$ then we set $v_j=w_j=0$ for $j=\frac{g-\ell}{2}+1,\dots,s$.
If, on the other hand, $g>\ell$ and $g$ has the opposite parity to $\ell$, i.e. $g-\ell=2\kappa+1$,
then we slightly alter the pencil in~\eqref{eq:25.7.18} to
%
\[
E(\la)=\diag(\alpha_1F_{n_1},\hdots,\alpha_\ell F_{n_\ell},
\beta_1G_{n_{\ell+1},n_{\ell+2}},\hdots,\beta_\kappa G_{n_{\ell+2\kappa-1},n_{\ell+2\kappa}},
\beta_{\kappa+1}F_{n_g},0)+
\la 0_{n\times n}.
\]
%
Also this pencil can be written in the form~\eqref{herm-rank1}, noting that a block
$F_\nu$ can also be represented in the form $\widetilde v\widetilde w^*+\widetilde w\widetilde v^*$
by choosing $\widetilde v=\widetilde w=\frac{1}{2}e_1$. In all cases, the perturbed pencil $L+E$
does not have the eigenvalue $\lambda_i$ and all eigenvalues different from those of $L$ are simple
if the parameters $\alpha_i$ and $\beta_j$ are chosen appropriately.

\emph{Case (2)}: $\la_i=\infty$. This case follows by applying the already proved Case (1) to the
reversal of the pencil $L$.

\emph{Case (3)}: $\lambda_i\in\mathbb C\setminus\mathbb R$. In the following we denote $\lambda_i$
by $\mu$, for consistency with the notation used before.
In this case, the Hermitian canonical form contains $2k\times 2k$ coupled blocks associated with $\mu$
and $\overline \mu$, each of size $k\times k$, as indicated in the proof of Theorem \ref{rank1-herm.th}.
Then, we may assume that $L(\la)$ is of the form
\[
\begin{array}{cl}
L(\la)=&\diag\left(R\diag(J_{n_1}(\overline\mu-\la),J_{n_1}(\mu-\la)),\hdots,\right.\\&
\textcolor{white}{\diag(}\left.R
\diag(J_{n_g}(\overline\mu-\la),J_{n_g}(\mu-\la)),\widetilde L(\la))\right),
\end{array}
\]
where, again, neither $\mu$ nor $\overline\mu$ are eigenvalues of $\widetilde L(\la)$. Furthermore,
we assume that $g\geq r$. (The subcase $g<r$ can be treated analogously to the corresponding subcase in
Case~(1).)

Let $\widetilde F_{2\nu}=uu^*$, with $u=e_1+e_{\nu+1}\in\mathbb C^{2\nu}$ and
$\widetilde G_{2\nu,2\widetilde\nu}=vw^*+wv^*$, with $v=e_{2\nu+1}+e_{2\nu+\widetilde\nu+1}\in\CC^{2(\nu+\widetilde\nu)}$,
$w=\frac{1}{2}(e_1+e_{\nu+1})\in\CC^{2(\nu+\widetilde\nu)}$. Thus $\widetilde F_{2\nu}$ is the $2\nu\times 2\nu$ matrix whose
entries are all zero except for the entries in the positions $(1,1)$, $(1,\nu+1)$, $(\nu+1,1)$
and $(\nu+1,\nu+1)$, which are all equal to $1$, and
$\widetilde G_{2\nu,2\widetilde\nu}$ is the $2(\nu+\widetilde\nu)\times 2(\nu+\widetilde\nu)$ matrix whose
entries are all zero except for the entries in the positions $(1, 2\nu+1)$, $(1,2\nu+\widetilde\nu+1)$,
$(\nu+1,2\nu+1)$, $(\nu+1,2\nu+\widetilde\nu+1)$, $(2\nu+1,1)$, $(2\nu+1,\nu+1)$,
$(2\nu+\widetilde\nu+1,1)$, and $(2\nu+\widetilde\nu+1,\nu+1)$ which are all equal to $1$.
\cm{Let $E(\la)$ be}
\begin{equation}\label{herm-pert}
E(\la)=\diag(\alpha_1\widetilde F_{2n_1},\hdots,\alpha_\ell\widetilde F_{2n_\ell},\beta_1\widetilde G_{2n_{\ell+1},2n_{\ell+2}},
\hdots,\beta_s\widetilde G_{2n_{r-1},2n_{r}},0)+\la 0_{n\times n},
\end{equation}
where the real parameters $\alpha_1,\dots,\alpha_\ell,\beta_1,\dots,\beta_s$ will be specified later.

By construction, $\rank E=r$ and $E(\la)\in\cC_s^\H$. Again, since $\Phi_s$ is surjective, there is
some $x\in\cm{\mathbb R^{2\ell}\times\CC^{(r+s)n}}$ such that $\Phi_s(x)=E(\la)$. \cm{(Again, $x$ can be chosen to be of arbitrarily
small norm provided that the parameters $\alpha_i,\beta_j$ are sufficiently small.)}
It remains to see that the partial multiplicities
of $L+E$ at $\mu$ are $(n_{r+1}, \hdots,n_g)$ and that all eigenvalues of $L+E$ that are different from
those of $L$ are simple. Again, since the smallest $g-r$ Jordan blocks associated
with $\mu$ in $L(\la)$ are not modified by the perturbation $E(\la)$, they will stay in the WCF of $L+E$,
so $(n_{r+1}, \hdots,n_g)$ is a sublist of the list of partial multiplicities of $L+E$ at $\la_0$.

With the help of the Laplace expansion, one can easily show that the determinant of
each block $R\diag(J_{n_i}(\overline\mu-\la),J_{n_i}(\mu-\la))+\alpha_i\widetilde F_{n_i,n_i}$ is given by
\[
\chi_i(\lambda)=(-1)^{\varrho_i}\big((\lambda-\mu)^{n_i}(\lambda-\overline\mu)^{n_i}-
\alpha_i(\lambda-\mu)^{n_i}-\alpha_i(\lambda-\overline\mu)^{n_i}\big),
\]
where $\varrho_i$ is an integer only depending on $n_i$. It was shown in \cite[Example 4.2]{MehMRR12}
that such a polynomial has simple roots (and clearly these are different from $\mu$ and $\overline\mu$)
if $\alpha_i$ is chosen such that $|\alpha_i|\leq\frac{|\mu-\overline\mu|^{n_i}}{2}$.

On the other hand, again with the help of the Laplace expansion and performing tedious but elementary
calculations, one finds that the determinant of each block
\[
R\diag(J_{n_{\ell+2j-1}}(\overline\mu-\la),J_{n_{\ell+2j-1}}(\mu-\la),J_{n_{\ell+2j}}(\overline\mu-\la),
J_{n_{\ell+2j}}(\mu-\la))+\beta_jG_{n_{\ell+2j-1},n_{\ell+2j}}
\]
is given by
\begin{eqnarray*}
&&\chi_{\ell+j}(\lambda)\\
&=&(-1)^{\varrho_j}\big(
(\lambda-\mu)^{n_{\ell+2j-1}+n_{\ell+2j}}(\lambda-\overline\mu)^{n_{\ell+2j-1}+n_{\ell+2j}}
-\beta_j^2(\lambda-\mu)^{n_{\ell+2j-1}}(\lambda-\overline\mu)^{n_{\ell+2j}}
\\
&&\textcolor{white}{(-1)^{\varrho_j}\big(}
-\beta_j^2(\lambda-\mu)^{n_{\ell+2j}}(\lambda-\overline\mu)^{n_{\ell+2j-1}}
-\beta_j^2(\lambda-\mu)^{n_{\ell+2j-1}+n_{\ell+2j}}
-\beta_j^2(\lambda-\overline\mu)^{n_{\ell+2j-1}+n_{\ell+2j}}\big).
\end{eqnarray*}
If $|\beta_j|$ is sufficiently small, then $\chi_{\ell+j}$ is guaranteed to have
only simple roots (that are clearly all different from $\mu$ and $\overline\mu$).
Indeed, assume that $\lambda$ is a common root of $\chi_{\ell+j}$ and $\chi_{\ell+j}'$.
Then multiplying the equation $\chi_{\ell+j}=0$ with $(\lambda-\mu)(\lambda-\overline\mu)$
and using twice the equation $\chi_{\ell+j}(\lambda)=0$, we obtain that
\[
\beta^2\big((\lambda-\mu)^{n_{\ell+2j-1}+n_{\ell+2j}}+(\lambda-\overline\mu)^{n_{\ell+2j-1}+n_{\ell+2j}}\big)=0,
\]
which implies $|\lambda-\mu|=|\lambda-\overline\mu|$. Using the fact that roots of polynomials
depend continuously on the coefficients of the polynomials it follows that $\beta_j$ can be chosen
sufficiently small such that the roots of $\chi_{\ell+j}$ have a distance from either $\mu$ or
$\overline\mu$ less than $\frac{|\mu-\overline\mu|}{2}$ which then contradicts
$|\lambda-\mu|=|\lambda-\overline\mu|$.

Therefore, choosing $\alpha_1,\dots,\alpha_\ell,\beta_1,\dots,\beta_s$ appropriately, we can
guarantee that there are $n_1+\dots+n_r$ simple eigenvalues close to $\mu$ or $\overline\mu$, respectively,
corresponding to the $r$ Jordan blocks that were perturbed by $E$. Indeed, after having chosen
$\alpha_1$, let $\delta_1$ denote the smallest distance of a root of $\chi_1$ to the set $\{\mu,\overline\mu\}$.
Then choose $\alpha_2$ so small that the (simple) roots of $\chi_2$ are located within circles of
a radius less then $\delta_1$ around $\mu$ or $\overline\mu$, respectively. Then let $\delta_2$ be
the smallest distance of a root of $\chi_2$ to the set $\{\mu,\overline\mu\}$ and continue in this
manner choosing $\alpha_3,\dots,\alpha_\ell,\beta_1,\dots,\beta_s$ such that all eigenvalues of
$L+E$ that are different from the eigenvalues of $L$ are simple.
\end{proof}

\begin{theorem}\label{sym-main.th}{\rm (Generic change under low-rank perturbations of symmetric pencils).}
Let $L(\lambda)$ be a regular $n\times n$ symmetric matrix pencil and let $\lambda_1,\dots,\lambda_\kappa$
denote the pairwise distinct eigenvalues of $L(\lambda)$ having the partial multiplicities
$n_{i,1}\geq\dots\geq n_{i,g_i}>0$ for $i=1,\dots,\kappa$, respectively. Furthermore, let $r$ be a positive integer,
let $0\leq s\leq\lfloor r/2\rfloor$ and let $\Phi_s$ be the map as in Remark~{\rm\ref{param.rem}}
and $\ell=r-2s$.
Then there exists a generic set $\cG_s$ in $\cm{\CC^{2\ell+(r+s)n}}$ such that, for all
$E(\la)\in\Phi_s(\cG_s)$, the perturbed pencil $L+E$ is regular and the partial multiplicities of
$L+E$ at $\la_i$ are given by $n_{i,r+1}\geq\cdots\geq n_{i,g_i}$.
(In particular, if $r\geq g_i$ then $\lambda_i$ is not an eigenvalue of $L+E$.)
Furthermore, all eigenvalues of $L+E$ that are different from those of $L$ are simple.
\end{theorem}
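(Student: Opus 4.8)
The plan is to transcribe the proof of Theorem~\ref{herm-main.th}, dropping the sign characteristic and, as it turns out, one of its three cases. By Theorem~\ref{local} --- applied with $\FF=\CC$, $\mu=1$, and $m=2\ell+(r+s)n$, so that $\FF^m=\CC^{2\ell+(r+s)n}$ is precisely the (complex, polynomial) domain of $\Phi_s$ from Remark~\ref{param.rem}; no passage to the reals is needed here, since the symmetric structure does not involve complex conjugation --- it suffices to exhibit, for each $i=1,\dots,\kappa$ and every $\varepsilon>0$, some $x_i\in\CC^{2\ell+(r+s)n}$ with $\|x_i\|<\varepsilon$ for which $E(\la):=\Phi_s(x_i)$ makes $L+E$ regular, gives it the partial multiplicities $n_{i,r+1}\geq\cdots\geq n_{i,g_i}$ at $\la_i$, and is such that all eigenvalues of $L+E$ different from those of $L$ are simple. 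Since genericity is invariant under congruence, we may assume $L$ is in the symmetric canonical form of \cite[Theorem 2.17]{batzke-thesis}. The decisive simplification relative to the Hermitian case is that this canonical form carries no sign characteristic and that non-real finite eigenvalues appear through exactly the same blocks $RJ_k(a-\la)$ (now with $a\in\CC$) as real ones; hence only two cases remain.

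\emph{Case $\la_i$ finite.} As in Case~(1) of the proof of Theorem~\ref{herm-main.th} we may take $L(\la)=\diag\big(RJ_{n_1}(\la_i-\la),\dots,RJ_{n_g}(\la_i-\la),\widetilde L(\la)\big)$ with $\la_i$ not an eigenvalue of $\widetilde L$, abbreviating $g=g_i$ and $n_j=n_{i,j}$. Put $F_\nu:=e_1e_1^\top$ and $G_{\nu,\widetilde\nu}:=e_{\widetilde\nu+1}\big(\tfrac12 e_1\big)^\top+\big(\tfrac12 e_1\big)e_{\widetilde\nu+1}^\top$, both symmetric, and for $r\leq g$ set
\[
E(\la)=\diag\big(\alpha_1F_{n_1},\dots,\alpha_\ell F_{n_\ell},\beta_1G_{n_{\ell+1},n_{\ell+2}},\dots,\beta_sG_{n_{r-1},n_r},0\big)+\la 0_{n\times n}
\]
with complex parameters $\alpha_j,\beta_j$. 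Then $\rank E=r$, $E(\la)$ has the form~\eqref{sym-rank1} --- hence $E\in\cC_s^{Sym}$ --- and by surjectivity of $\Phi_s$ there is $x$ with $\Phi_s(x)=E$, which, on choosing the $\alpha_j,\beta_j$ small and distributing them among the entries of $x$, can be taken of arbitrarily small norm. Only the first $r$ blocks of $L$ are perturbed, so $L+E$ retains Jordan blocks of sizes $n_{r+1}\geq\cdots\geq n_g$ at $\la_i$; a Laplace expansion --- the same as in the Hermitian proof, but simpler as no sign intervenes --- shows that, up to nonzero constants, the characteristic polynomials of the perturbed diagonal blocks are $(\la-\la_i)^{n_j}-\alpha_j$ and $(\la-\la_i)^{n_{\ell+2j-1}+n_{\ell+2j}}-\beta_j^2$, so the new eigenvalues lie on circles about $\la_i$ of radii $|\alpha_j|^{1/n_j}$ and $|\beta_j|^{2/(n_{\ell+2j-1}+n_{\ell+2j})}$. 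Choosing these radii pairwise distinct and smaller than the distance from $\la_i$ to the spectrum of $\widetilde L$ makes the new eigenvalues simple and distinct from those of $L$; combined with Theorem~\ref{local} this pins down the partial multiplicities at $\la_i$ as precisely $n_{r+1},\dots,n_g$. The subcase $g<r$ is handled exactly as in Case~(1) of Theorem~\ref{herm-main.th}: keep only as many $F$- and $G$-blocks as needed (setting the superfluous $u_i,v_j,w_j$ to zero) and, if $g-\ell$ is odd, replace the last $G$-block by one more $F$-block, using $F_\nu=\widetilde v\widetilde w^\top+\widetilde w\widetilde v^\top$ with $\widetilde v=\widetilde w=\tfrac12 e_1$.

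\emph{Case $\la_i=\infty$.} Apply the finite case to $\rev L(\la)$, which is again symmetric (with $0$ in place of $\infty$), and transport the conclusion back through the reversal; here one uses that $\rev$ preserves symmetry, maps $\cC_s^{Sym}$ into itself (by Remark~\ref{reversal.rem}, since it replaces each $w_j$ by $\rev w_j$, again of degree $\leq 1$, and each $(a_i+\la b_i)u_iu_i^\top$ by $(b_i+\la a_i)u_iu_i^\top$), and leaves partial multiplicities unchanged. In summary, no step is a genuine obstacle: the only point requiring care is the parity bookkeeping in the subcase $g<r$, and that is already settled in the Hermitian proof. The argument is the proof of Theorem~\ref{herm-main.th} with the sign characteristic and Case~(3) removed, the single new feature --- genericity over $\CC$ with coefficients $a_i,b_i\in\CC$ --- causing no difficulty.
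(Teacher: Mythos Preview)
Your proof is correct and follows essentially the same approach as the paper's own proof: apply Theorem~\ref{local} with $\FF=\CC$ and $m=2\ell+(r+s)n$, use the symmetric canonical form, and observe that since all finite eigenvalues (real or not) appear through blocks $RJ_k(a-\la)$ without a sign characteristic, only Case~(1) of the Hermitian argument is needed, with the infinite case handled by reversal. The paper's proof is simply a terser statement of exactly this reduction, so your expanded version matches it faithfully.
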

\begin{proof} The proof is similar to the one of Theorem \ref{herm-main.th} \cm{now applying Theorem~\ref{local} for the
case $\mathbb F=\mathbb C$ and $m=2\ell+(r+s)n$}. The only difference
comes from the blocks in the symmetric canonical form, which are different to the ones in the Hermitian
canonical form. In particular, in the symmetric case there is no need to distinguish between real and
complex eigenvalues, so we can follow exactly the same arguments as in the proof of Theorem \ref{herm-main.th}
for an eigenvalue $\la_i\in\mathbb R$, which now is valid for a general $\la_i\in\CC$.
\end{proof}
\begin{theorem}\label{talter-main.th}{\rm (Generic change under low-rank perturbations of $\top$-alternating pencils).}
Let $L(\lambda)$ be a regular $n\times n$ $\top$-alternating matrix pencil and let $\lambda_1,\dots,\lambda_\kappa$
denote the pairwise distinct eigenvalues of $L(\lambda)$ having the partial multiplicities
$n_{i,1}\geq\dots\geq n_{i,g_i}>0$ for $i=1,\dots,\kappa$, respectively. Furthermore, let $r$ be a positive integer
and let $\Phi$ be the map as in Remark~{\rm\ref{param.rem}}, i.e.,
$\Phi$ is as in~\eqref{phione}.
Then, there exists a generic set $\cG$ in $\CC^{\lfloor\frac{3r}{2}\rfloor n}$ such that for all
$E(\la)\in\Phi(\cG)$, the perturbed pencil $L+E$ is regular and the partial multiplicities of
$L+E$ at $\la_i$ are the ones given in
Table {\rm\ref{alter.table}}, where \eqref{property} is the following property:
\begin{equation}\label{property}\tag{P}
n_{i,r}=n_{i,r+1}=\cdots=n_{i,r+d}>n_{i,r+ d+1},\qquad\mbox{\rm with $d$ odd}.
\end{equation}
\begin{center}
\begin{table}[htb!]\footnotesize
\begin{tabular}{|c|c|l|c|}\hline
Structure&e-val $\la_i$&case&multiplicities\\\hline\hline
\multirow{7}{*}{$\top$-even}&
\multirow{2}{*}{$\la_i=0$}&$n_{i,r+1}$ odd and \eqref{property} holds&$\cm{(n_{i,r+1}+1,n_{i,r+2},\dots,n_{i,g_i})}$\\
&& otherwise&$\cm{(n_{i,r+1},n_{i,r+2},\dots,n_{i,g_i})}$\\\cline{2-4}    
&\multirow{4}{*}{$\la_i=\infty$}& $r$ even, $n_{i,r+1}$ even, and \eqref{property} holds&$\cm{(n_{i,r+1}+1,n_{i,r+2},\dots,n_{i,g_i})}$\\
&& $r$ even, otherwise&$\cm{(n_{i,r+1},n_{i,r+2},\dots,n_{i,g_i})}$\\
&&$r$ odd, $n_{i,r+1}$ even, and \eqref{property} holds&$\cm{(n_{i,r+1}+1,n_{i,r+2},\dots,n_{i,g_i},1)}$\\
&& $r$ odd, otherwise&$\cm{(n_{i,r+1},n_{i,r+2},\dots,n_{i,g_i},1)}$\\\cline{2-4}     
&$\la_i\in\CC\setminus\{0\}$&all&$\cm{(n_{i,r+1},n_{i,r+2},\dots,n_{i,g_i})}$\\\hline\hline
\multirow{7}{*}{$\top$-odd}&
\multirow{4}{*}{$\la_i=0$}& $r$ even, $n_{i,r+1}$ even, and \eqref{property} holds&$\cm{(n_{i,r+1}+1,n_{i,r+2},\dots,n_{i,g_i})}$\\
&& $r$ even, otherwise&$\cm{(n_{i,r+1},n_{i,r+2},\dots,n_{i,g_i})}$\\
&& $r$ odd, $n_{i,r+1}$ even, and \eqref{property} holds&$\cm{(n_{i,r+1}+1,n_{i,r+2},\dots,n_{i,g_i},1)}$\\
&& $r$ odd, otherwise&$\cm{(n_{i,r+1},n_{i,r+2},\dots,n_{i,g_i},1)}$\\\cline{2-4}     
&\multirow{2}{*}{$\la_i=\infty$}&$n_{i,r+1}$ odd and \eqref{property} holds&$\cm{(n_{i,r+1}+1,n_{i,r+2},\dots,n_{i,g_i})}$\\
&& otherwise&$\cm{(n_{i,r+1},n_{i,r+2},\dots,n_{i,g_i})}$\\\cline{2-4}     
&$\la_i\in\CC\setminus\{0\}$&all&$\cm{(n_{i,r+1},n_{i,r+2},\dots,n_{i,g_i})}$\\\hline
\end{tabular}
\caption{\footnotesize Generic partial multiplicities at $\la_i$ for rank-$r$ $\top$-alternating perturbations}\label{alter.table}
\end{table}
\end{center}
In particular, if $r\geq g_i$ then $\lambda_i$ is not an eigenvalue of $L+E$.
Furthermore, all eigenvalues of $L+E$ that are different from those of $L$ are simple.
\end{theorem}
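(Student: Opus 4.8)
The plan is to reuse the architecture of the proof of Theorem~\ref{herm-main.th}, the new ingredient being the parity restrictions that the $\top$-alternating structure imposes on the partial multiplicities at $0$ and $\infty$. First I would dispose of the $\top$-odd case by reversal: $\rev$ maps $\top$-odd pencils to $\top$-even ones, intertwines the parameterization~\eqref{phione} of the two structures (cf.\ Theorem~\ref{rank1-todd.th}), preserves rank and hence genericity, and exchanges the eigenvalues $0$ and $\infty$ while fixing all others; since Table~\ref{alter.table} is invariant under this exchange, it suffices to prove the $\top$-even case. For that I would invoke Theorem~\ref{local} with $\FF=\CC$, $m=\lfloor 3r/2\rfloor n$ and $\mu=1$; conditions (i)--(iii) are immediate ((iii) trivially, since $\mu=1$ --- note the $\top$-even structure does \emph{not} force all algebraic multiplicities to be even, e.g.\ $\infty$ may be simple, so $\mu=2$ is unavailable here). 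After reducing $L$ to $\top$-even canonical form (genericity being preserved under the relevant congruence), Theorem~\ref{local} reduces the statement, for each eigenvalue $\la_i$, to: (a)~the bound $a_i^{(x)}\ge\widetilde a_i$ for \emph{all} $x$ with $L+\Phi(x)$ regular, where $\widetilde a_i$ is the sum of the partial multiplicities listed in Table~\ref{alter.table}; (b)~the construction, for arbitrarily small $\varepsilon$, of one $x_{0,i}$ with $\|x_{0,i}\|<\varepsilon$, $L+\Phi(x_{0,i})$ regular, $a_i^{(x_{0,i})}=\widetilde a_i$, and all eigenvalues of $L+\Phi(x_{0,i})$ other than those of $L$ simple; and (c)~the purely combinatorial check that the list in Table~\ref{alter.table} is the only list that dominates $(n_{i,r+1},\dots,n_{i,g_i})$, has total size $\widetilde a_i$, and is compatible with the $\top$-even structure at $\la_i$.

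\paragraph{The cases without structural obstruction.}
For $\la_i\in\CC\setminus\{0\}$ there is no obstruction (blocks at $\la_i$ and $-\la_i$ occur in pairs but are otherwise unconstrained), $\widetilde a_i=n_{i,r+1}+\dots+n_{i,g_i}$, and (b) is obtained as in Case~(3) of Theorem~\ref{herm-main.th}: perturb the $\pm\la_i$-coupled blocks by rank-$2$ ``$\widetilde G$''-type pencils, each destroying two coupled block pairs, together with the leftover constant rank-$1$ term $uu^\top$ (present in~\eqref{teven-rank1} when $r$ is odd), which destroys one more coupled pair, so that exactly the $r$ largest blocks at $\la_i$ disappear; Laplace expansion gives the characteristic polynomials, and, much as in \cite[Example~4.2]{MehMRR12}, the free parameters can be chosen small and successively so as to make all newly produced eigenvalues simple. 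For $\la_i\in\{0,\infty\}$ in the cases where Table~\ref{alter.table} gives the ``naive'' list $(n_{i,r+1},\dots,n_{i,g_i})$ (with an extra $1$ appended at $\infty$ when $r$ is odd), (b) is obtained as in Cases~(1)--(2) of Theorem~\ref{herm-main.th}, with rank-$1$ ``$F$''-type and rank-$2$ ``$G$''-type pencils destroying the $r$ largest blocks at $\la_i$; the one structural subtlety is that a constant rank-$1$ term $uu^\top$ placed on an even-sized block at $0$ destroys that block without residue, whereas placed on a block at $\infty$ it destroys that block but leaves a $1\times 1$ Jordan block at $\infty$ --- which is exactly why, and only where, the appended $1$ appears in Table~\ref{alter.table}. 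For these cases (a) follows from the dominance relation (Theorem~\ref{local}(1), i.e.\ \cite[Lemma~2.1]{ddm}); when in addition $r$ is odd and $\la_i=\infty$, one also uses that the $\la$-coefficient $E_B$ of a $\top$-even perturbation is skew-symmetric, hence of even rank $\le r-1$, so that $L+E$ has at least $g_i-r+1$ Jordan blocks at $\infty$ and the naive total size $n_{i,r+1}+\dots+n_{i,g_i}$ cannot be attained.

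\paragraph{The obstructed case, and the main obstacle.}
The genuinely new case is the one governed by property~\eqref{property} with $n_{i,r+1}$ of the ``forbidden'' parity (odd at $0$, even at $\infty$). There the surviving blocks $n_{i,r+1}=\dots=n_{i,r+d}$ number $d$, an odd integer, and their common value occurs an even number of times among all partial multiplicities of $L$ at $\la_i$; hence the naive list would break the structural parity constraint and cannot be produced by any structure-preserving perturbation, which together with (c) forces $\widetilde a_i$ to be one (or, when also $r$ is odd and $\la_i=\infty$, two) larger than the naive total size and identifies the generic list with the ``bumped'' one in Table~\ref{alter.table}. The construction in (b) is then the crux: I would destroy the $r-1$ largest blocks at $\la_i$ with $F$- and $G$-type pencils as above, and couple the last of the $r$ blocks to be destroyed (of size $n_{i,r+1}$) to a surviving block of the same size $n_{i,r+1}$ by a single constant rank-$1$ term $uu^\top$ --- the one already available in~\eqref{teven-rank1} when $r$ is odd, and, when $r$ is even, obtained by a parity count showing that an even-sized companion block is available so that two such terms combine, via~\eqref{rank1-rank2}, into one admissible rank-$2$ piece. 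A Laplace-expansion computation of the determinant and of the corank at $\la_i$ should show that this coupling fuses the two size-$n_{i,r+1}$ blocks into a single Jordan block of size $n_{i,r+1}+1$ plus $n_{i,r+1}-1$ simple finite eigenvalues, leaving every other surviving block untouched. I expect the principal obstacle to lie exactly here: verifying this fusing phenomenon uniformly over all relevant block sizes and parities, arranging the accompanying parity bookkeeping so that the assembled perturbation is a genuine rank-$r$ element of the image of $\Phi$ with only simple new eigenvalues, and establishing the uniqueness claim (c); the more tedious of these determinant computations I would relegate to Appendix~\ref{appendix}.
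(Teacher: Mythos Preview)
Your overall architecture matches the paper's closely: reduce to $\top$-even canonical form, invoke Theorem~\ref{local} with $\mu=1$, and for each eigenvalue build an explicit small perturbation realizing the target algebraic multiplicity with simple new eigenvalues. Your reduction of the $\top$-odd case entirely to the $\top$-even case via reversal is a bit cleaner than what the paper does (the paper treats some $\top$-odd subcases by hand, only invoking reversal for $\la_0=\infty$), but both routes work. Your constructions in the unobstructed cases, and the ``fusing'' rank-$1$ perturbation of a pair of equal-size blocks in the obstructed case, are essentially the same as the paper's; the determinant identities you anticipate are exactly~\eqref{eq:11.1.19} and~\eqref{det-identity}.

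There is, however, a genuine gap in step~(c). You describe it as ``the purely combinatorial check that the list in Table~\ref{alter.table} is the only list that dominates $(n_{i,r+1},\dots,n_{i,g_i})$, has total size $\widetilde a_i$, and is compatible with the $\top$-even structure''. This is false. When $n_{i,r+1}-1$ already occurs among $n_{i,r+2},\dots,n_{i,g_i}$ (say $n_{i,r+1}-1=n_{i,r+d+1}$), there are \emph{two} admissible lists of the right total size: bump the first surviving block to get $(n_{i,r+1}+1,n_{i,r+2},\dots,n_{i,g_i})$, or bump one of the $n_{i,r+d+1}$'s to get $(n_{i,r+1},\dots,n_{i,r+d},n_{i,r+d+1}+1,n_{i,r+d+2},\dots,n_{i,g_i})$. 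Both dominate the naive list, both have algebraic multiplicity $\widetilde a_i$, and both satisfy the $\top$-even parity constraint at $0$. (There is an analogous ambiguity when $n_{i,r}=1$: either $(2,1,\dots,1)$ or $(1,\dots,1)$ with one extra $1$.) Theorem~\ref{local}(2) only pins down $\widetilde a_i$, so the ``additional part'' of that theorem does not apply here, and no amount of combinatorics resolves which of the two candidates is generic.

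The paper handles this by a separate contradiction argument: assuming the ``wrong'' list (b2) is not contained in a proper algebraic set, it augments the rank-$r$ parameterization by $d$ extra rank-$1$ symmetric summands $u_ju_j^\top$ to obtain a map $\widetilde\Phi$ into $Even^\top_{r+d}$, applies Theorem~\ref{local}(1) to $\widetilde\Phi$ on the product set $\mathcal B\times(\CC^n)^d$, and then exhibits one explicit perturbation in the image of $\widetilde\Phi$ whose partial multiplicities at $\la_i$ fail to dominate the list forced by~(b2). Since $\mathcal B\times(\CC^n)^d$ is not contained in a proper algebraic set (this uses \cite[Lemma~2.2]{batzke16}), the two generic sets must intersect, which is a contradiction. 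This step is the genuinely new idea you are missing; without it, your proof of the obstructed case is incomplete.
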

\begin{proof}
For simplicity, we drop the dependence on $i$ in the geometric and partial multiplicities of $\lambda_i$,
i.e., we write $g$ instead of $g_i$ and $n_1\geq\dots\geq n_g$ instead of $n_{i,1}\geq\dots\geq n_{i,g_i}$.
We also replace $\la_i$ by $\la_0$.
We will only prove the case $g\geq r$ in full detail. (The case $g<r$ can be treated similarly by constructing
an analogous perturbation of rank $g$ instead of rank $r$, thus showing that $\lambda_i$ is not an eigenvalue
of the perturbed pencil.)
\cm{We aim to apply Theorem~\ref{local} for the case $\mathbb F=\mathbb C$ to any single eigenvalue of
the pencil. Here we make use of the fact that, in contrast to the Hermitian case, the set $Even^\top_r$ need not be
decomposed into smaller sets that can be parameterized as in the sense of Definition~\ref{param.def},
but the parameterization map $\Phi$ as in~\eqref{phione} is already a map onto $Even^\top_r$.}

\cm{\emph{Case 1): property~\eqref{property} does not apply.}}
\cm{We} first consider all cases except those where property \eqref{property} appears in Table \ref{alter.table}.
In these cases, it is sufficient to prove the existence of one particular perturbation $E(\la)$ of
arbitrarily small norm which belongs to $Even^\top_r$.

\cm{\emph{Subcase 1a): $\lambda_0\in\mathbb C\setminus\{0\}$.}}
As in the proof of Theorem \ref{herm-main.th}, we may assume that $L(\la)$ is given in $\top$-alternating
canonical form. \cm{Let us start with the $\top$-even structure.}
In the $\top$-even canonical form, the blocks associated with $\la_0$ and $-\la_0$ appear in
pairs \cite[Th. 2.16]{batzke-thesis}. Then, we may assume that $L(\la)$ is of the form:
\[
\begin{array}{ccl}
L(\la)&=&\diag\big(R\diag(-\la I-J_{n_1}(\la_0),\la I-J_{n_1}(\la_0)),\hdots,\\
&&\textcolor{white}{\diag\big(}R\diag(-\la I-J_{n_g}(\la_0),\la I-J_{n_g}(\la_0)),\widetilde L(\la)\big),
\end{array}
\]
where $\la_0$ is not an eigenvalue of $\widetilde L(\la)$.

Let $\widetilde F_{2m}$ and $\widetilde G_{2m,2n}$ be the same matrices as in the proof of
Theorem \ref{herm-main.th}, and let $E(\la)$ be the pencil in \eqref{herm-pert}. Note that the
pencil $E(\la)$ belongs to $Even^\top_r$.
Therefore, there is some $x\in\CC^{\lfloor\frac{3r}{2}\rfloor n}$ such that $\Phi(x)=E(\la)$, \cm{and $x$ can be chosen
to be of arbitrarily small norm provided that the parameters $\alpha_i,\beta_j$ are sufficiently small.}
Moreover, with similar reasonings to the ones in the proof of Theorem \ref{herm-main.th}, it can
be seen that the nonzero partial multiplicities at $\la_0$ in $L+E$ are $(n_{r+1},\hdots,n_g)$,
and that all eigenvalues of $L+E$ different from those of $L$ are simple, if the parameters
$\alpha_i,\beta_j$ in the pencil~\eqref{herm-pert} have been chosen appropriately.

The case of the $\top$-odd structure can be addressed in a similar way,
just multiplying by $\la$ the perturbation blocks $\widetilde F_{2m}$ and $\widetilde G_{2m,2n}$
in \eqref{herm-pert}.

\cm{\emph{Subcase 1{\colb b}): $\lambda_0=0$ and $\top$-even structure.}}
\cm{Recall that by assumption} condition \eqref{property} is not satisfied.
Then $L(\la)$ is of the form
\[
L(\la)=\diag(L_0(\la),\widehat L_0(\la),\widetilde L(\la)),
\]
where $L_0(\la)$ contains the Jordan blocks corresponding to the largest $r$ partial multiplicities
at $0$ (namely, $n_1\geq\cdots\geq n_r$), $\widehat L_0$ contains the blocks corresponding to the
remaining partial multiplicities at $0$, and $\widetilde L(\la)$ contains the information of the
nonzero eigenvalues.

If $n_{r+1}$ is even or $n_{ r+1}$ is odd, but $n_r=n_{r+1}=\cdots=n_{r+ d}>n_{r+d+1}$ with $d$ even
(i.e., (P) does not hold), then
the part $L_0(\la)$ is a direct sum of blocks of two types:

\begin{itemize}
\item[(i)] a $2k\times 2k$ block of the form
\[
\left[\begin{array}{cccccc}
&&&&&\la\\
&&&&\iddots&1\\
&&&\la&\iddots\\
&&-\la&1\\
&\iddots&\iddots\\
-\la&1
\end{array}\right]_{2k\times 2k}.
\]
\item[(ii)] A pair of $(2k+1)\times(2k+1)$ blocks of the form $R\diag (J_{2k+1}(-\la),J_{2k+1}(\la))$.
\end{itemize}
This is a consequence of the fact that, in the $\top$-even canonical form, the Jordan blocks with odd size associated
with the eigenvalue $0$ are paired up, and can be matched up to form pairs as in blocks of the form (ii)
(see \cite[Th. 2.16]{batzke-thesis}). Therefore, the blocks in $L_0(\la)$ with odd size larger than $n_r$
(if any) are paired up, and, since $d$ is even, also those of size $n_r$ (if any) are paired up.

For each block of type (i) we can add a rank-$1$ perturbation by adding just one entry equal to $\alpha$
in the upper left corner of the block. This perturbation is of the form $uu^\top$ (actually, it is
$\alpha F_{2k}$ in the proof of Theorem \ref{herm-main.th}), and it is easily checked that the characteristic
polynomial of the resulting perturbed block is given by $\chi=\lambda^{2k}-(-1)^k\alpha$ which means that
its eigenvalues are simple and on a circle with center in the origin and radius $|\alpha|^{\frac{1}{2k}}$.
For each pair of blocks of type (ii) we can add a rank-$2$ perturbation by adding entries equal to $\beta$ in
the positions $(1,1)$ and $(2k+2,2k+2)$. This perturbation is of the form
$\beta(vv^\top+ww^\top)$ with $v=e_1$ and $w=e_{2k+2}$, and, again, it is easily checked that the characteristic
polynomial of the resulting perturbed block is given by $\chi=\lambda^{4k+2}+\beta^2$ which implies that
its eigenvalues are simple and on a circle with center in the origin and radius $|\beta|^{\frac{1}{2k+1}}$.
Therefore, choosing the parameters $\alpha$ and $\beta$ appropriately,
we can construct a rank-$r$ perturbation $E(\la)$ of arbitrarily small norm which is $\top$-even such that
the nonzero partial multiplicities at $0$ in $L+E$ are $(n_{r+1},\hdots,n_g)$ and such that all eigenvalues
different from those of $L$ are simple, as desired.

\cm{\emph{Subcase 1{\colb c}): $\lambda_0=0$ and $\top$-odd structure.}}
\cm{The case that $r$ is even can be treated analogously to the previous subcase 1{\colb b}), by just replacing
$1$ with $\la$ in the nonzero entries of the perturbation constructed above. However, the case {\colb when} $r$
is odd} deserves some more effort. The reason for this relies on the fact that any generic
$\top$-odd perturbation with rank $r$ and $r\leq n$ \cm{being odd contains $0$} as an eigenvalue.
This can be seen by looking at the summand \cm{$\la uu^\top$ in Theorem \ref{rank1-todd.th}}.
In this case, the part $L_0(\la)$ is a direct sum of blocks of two types:
\begin{itemize}
\item[(i)] A pair $2k\times 2k$ blocks of the form $R\diag (J_{2k}(\la),-J_{2k}(-\la))$.

\item[(ii)] A $(2k+1)\times(2k+1)$ block of the form
\[
U_{k}:=\left[\begin{array}{ccccccc}
&&&&&&\la\\
&&&&&\la&1\\
&&&&\iddots&\iddots\\
&&&\la&1\\
&&\la&-1\\
&\iddots&\iddots\\
\la&-1
\end{array}\right]_{(2k+1)\times (2k+1)}.
\]
\end{itemize}
\cm{Since the $\top$-odd perturbation pencil $E(\lambda)=\lambda E_A+E_B$ has odd rank $r$, it follows that the skew-symmetric
constant coefficient $E_{\colb B}$ has rank at most $r-1$. Then a straightforward dimension argument implies that the geometric multiplicity
of the eigenvalue zero can change at most by $r-1$. Hence, the geometric multiplicity of the eigenvalue zero must be at least $g-r+1$.
Since the list of partial multiplicities at zero must dominate the list $(n_{r+1},\dots,n_{g})$, but also {\colb must} contain, at least,
$g-r+1$ elements, the algebraic multiplicity of $n_{r+1}+\cdots+n_g$ is not possible for the eigenvalue zero. Now, the
(unique) list of partial multiplicities {\colb with minimal algebraic multiplicity} that dominates $(n_{r+1},\dots,n_{g})$ {\colb and} is consistent with a geometric multiplicity of, at least,
$g-r+1$ is the list $(n_{r+1},\dots,n_{g},1)$. Thus, by Theorem~\ref{local}, it remains to
construct one particular perturbation (of arbitrarily small norm) such that the perturbed pencil has this list of partial multiplicities
at zero and such that all eigenvalues different from those of the unperturbed pencil are simple to show that this is the generic case.}

Now, we are going to show how to construct \cm{such a} $\top$-odd perturbation, like in the previous case. For each pair of
blocks of type (i) we add the pencil $M_{k}:=(\la+\alpha)e_1e_{2k+1}^\top+(\la-\alpha)e_{2k+1}e_1^\top$, with
$e_1,e_{2k+1}\in\CC^{4k\times 4k}$. It is straightforward to see that
$\det(R\diag (J_{2k}(\la),-J_{2k}(-\la))+M_k)=(\la^{2k}-\la+\alpha)(\la^{2k}-\la-\alpha)$, and that the roots of this polynomial are simple
for $\alpha\neq0$. 

For each pair of blocks of type (ii), $U_{k_1}$ and $U_{k_2}$, we add a rank-$2$ perturbation of the form
$N_{k_1,k_2}:=\beta( e_1e_{2k_1+2}^\top- e_{2k_1+2}e_1^\top)$, with $e_1,e_{2k_1+2}\in\CC^{2(k_1+k_2+1)}$.
It is straightforward to see that $\det\left(\diag(U_{k_1},U_{k_2})+N_{k_1,k_2}\right)=(-1)^{k_1+k_2}\la^{2(k_1+k_2+1)}+\beta^2$,
so all the eigenvalues of the perturbed pencil are simple for $\beta\neq0$.

Finally, we must include a rank-$1$ summand of the form $\la uu^\top$ to get a perturbation like in \eqref{todd-rank1}.
This summand may correspond to either a pair of blocks of type (i) or to a block of type (ii) above.
The first case is not possible, since otherwise condition \eqref{property} would hold. Therefore, we must have a
block of the form $U_{\frac{n_r-1}{2}}$, and we add  a perturbation $\gamma e_1e_1^\top$, with $u_1\in\CC^{n_r}$.
It is straightforward to see that $\det (U_{\frac{n_r-1}{2}}+\gamma e_1e_1^\top)=(-1)^{\frac{n_r-1}{2}}\la^{n_r}+\la\gamma$.
Therefore, the perturbed pencil has $\la_0=0$ as a simple eigenvalue, and the remaining eigenvalues are simple for $\gamma\neq0$.

As before, choosing the parameters $\alpha,\beta$, and $\gamma$ appropriately,
we can construct a rank-$r$ perturbation $E(\la)$ of arbitrarily small norm which is $\top$-odd such that
the nonzero partial multiplicities at $0$ in $L+E$ are $(n_{r+1},\hdots,n_g,1)$ and such that all eigenvalues
different from those of $L$ are simple.

\cm{\emph{{\colb Subcase 1d}) $\lambda_0=\infty$}.}
\cm{For the eigenvalue $\la_0=\infty$ we just apply the result for $\la_0=0$ in the reversal pencil (recall
that $L(\la)$ is $\top$-even if and only if $\rev L(\la)$ is $\top$-odd).}

\cm{\emph{Case 2) Property~\eqref{property} applies}.}
Note that in this case we must have $\lambda_0=0$ or $\lambda_0=\infty.$
We distinguish several subcases.

\cm{\emph{{\colb Subc}ase 2a) $\la_0=0$ and $\top$-even structure}.
This case corresponds to} the first line of Table \ref{alter.table}. By part (1) of Theorem~\ref{local} we know
that, for any $\top$-even rank-$r$ pencil $E$, there are at least $g-r$ partial multiplicities
at $0$ in $L+E$, say $m_{r+1}\geq\cdots\geq m_g$, with $m_i\geq n_i$, for $i=r+1,\hdots,g$.
However, by the canonical form for $\top$-even pencils (see \cite[Th. 2.16]{batzke-thesis}
), it is not possible that these partial multiplicities be exactly $n_{r+1}\geq\cdots\geq n_g$,
because $L+E$ is $\top$-even, $n_{r+1}$ is odd, and its value appears an odd number of times in the
list $\{n_{r+1},\hdots,n_g\}$, by property \eqref{property}. Consequently, the algebraic multiplicity
$n_{r+1}+\cdots+n_g$ for the eigenvalue $\lambda_0$ of $L+E$ is not possible in this case.

\cm{As in the previous case}, we will instead construct a $\top$-even perturbation $E$ of rank $r$ and of arbitrarily
small norm such that the algebraic
multiplicity of $L+E$ at $0$ is $\widetilde a=n_{r+1}+\cdots+n_g+1$ and such that all eigenvalues that are different
from those of $L$ are simple. Then by part (2) of Theorem~\ref{local}
there is a generic set $\cG\subseteq\CC^{\lfloor\frac{3r}{2}\rfloor n}$ such that for all corresponding perturbations $E$
we have the situation outlined above.

As before, let us assume that $L(\la)$ is given in $\top$-even canonical form, so we can write it as
\[
L(\la)=\diag\big(L_1(\la),R\diag (J_{n_r}(-\la),J_{n_r}(\la)),L_2(\la) ,\widetilde J(\la)\big),
\]
where $L_1(\la)$ contains the first $r-1$ Jordan blocks associated with $0$, $ L_2(\la)$
contains the Jordan blocks associated with $0$ and with sizes $n_{r+2},\hdots,n_g$, and
$\widetilde J(\la)$ corresponds to the nonzero eigenvalues (including infinity). Here, we used the fact that
$n_r=n_{r+1}$ by property~\eqref{property}. Now, let $E(\la)$ be of the form
\[
E(\la)=\diag (E_1,\gamma(e_1+e_{n_r+2})(e_1+e_{n_r+2})^\top, 0)
\]
where $e_1,e_{n_r+2}\in\CC^{2n_r}$ (with $e_{n_r+2}$ interpreted as being the zero vector in the case $n_r=1$),
and where $E_1$ is of size
$(n_1+\cdots+n_{r-1})\times (n_1+\cdots+n_{r-1})$ and is constructed as a direct sum of
blocks as explained above for the precedent case associated with the eigenvalue $\la_0=0$.
(Namely, $E_1$ consists of a direct sum of rank-$1$ blocks with sizes $n_i\times n_i$ or
rank-$2$ blocks with sizes $(n_i+n_{i+1})\times (n_i+n_{i+1})$, depending on whether $L_1(\la)$
contains a $n_i\times n_i$ block, with $n_i$ even, or a pair of blocks with sizes
$n_i\times n_i$ and $n_{i+1}\times n_{i+1}$, with $n_{i+1}=n_i$ odd.) Then
\begin{equation}\label{detpert}
\begin{array}{ccl}
\det(L+E)&=&\det (L_1(\la)+E_1)\\
&&\cdot\det\big(R\diag (J_{n_r}(-\la),J_{n_r}(\la))+\gamma(e_1+e_{n_r+2})(e_1+e_{n_r+2})^\top)\big)\\
&&\cdot\det L_2(\la)\cdot \det \widetilde J(\la).
\end{array}
\end{equation}
With straightforward computations (using again the Laplace expansion) \cm{it can be seen that%
\begin{equation}\label{eq:11.1.19}
\det(R\diag (J_{n_r}(-\la),J_{n_r}(\la))+\gamma(e_1+e_{n_r+2})(e_1+e_{n_r+2})^\top))=
\la^{n_r+1}(\lambda^{n_r-1}-\cm{2\gamma})
\end{equation}
if $n_r>1$, or $\det(R\diag (J_{n_r}(-\la),J_{n_r}(\la))+\gamma(e_1+e_{n_r+2})(e_1+e_{n_r+2})^\top))=\lambda^2$ if $n_r=1$,
see Appendix~\ref{appendix} (see also \cite[p. 663]{batzke14}).}
On the other hand, we have $\det L_2(\la)=\lambda^{n_{r+2}+\cdots+n_g}$. Thus, choosing the parameters
$\alpha_i$ and $\beta_j$ in $E_1$ and the parameter $\gamma$ appropriately, we can construct a perturbation pencil $E$
of arbitrarily small norm such that the algebraic multiplicity of $L+E$ at zero is $\widetilde a=n_{r+1}+\cdots+n_g+1$
and such that all eigenvalues of $L+E$ that are different from those of $L$ are simple, as desired.

However, the reader should keep in mind that part (2) of Theorem~\ref{local} only contains information on the
generic algebraic multiplicity of the eigenvalue $0$ of $L+E$ for a generic $\top$-even perturbation $E$.
Unlike the previous cases, it is no longer true that combining the parts (1) and (2) of Theorem~\ref{local}
forces the partial multiplicities of $L+E$ at $0$ to be uniquely determined. Therefore, it is necessary to further
investigate which lists of partial multiplicities at $0$ are possible such that both (1) and (2) of Theorem~\ref{local}
are satisfied. To this end, there are three possible situations:
\begin{itemize}
\item[(a)] If $\cm{n_{r+1}}-1\not\in\{ n_{r+2},\hdots,n_g,0\}$, then the only possible partial multiplicities
are $n_{r+1}+1> n_{r+2}\geq\cdots\geq n_g$.

\item[(b)] If $\cm{n_{r+1}}-1\in\{n_{r+2},\hdots,n_g\}$, say $\cm{n_{r+1}}-1=n_{r+d+1}$ (and $ d$ being minimal with this property),
then there are two possible lists of partial multiplicities:
\begin{itemize}
\item[(b1)] $n_{r+1}+1> n_{r+2}\geq\cdots\geq n_g$, or
\item[(b2)] $n_{r+1}=\cdots=n_{r+ d}=n_{r+ d+1}+1> n_{r+d+2}\geq\cdots\geq n_g$.
\end{itemize}
\item[(c)] If $n_r=1$, then there are two possible lists of partial multiplicities:
\begin{itemize}
\item[(c1)] $(2,\underbrace{1,\hdots,1}_{g-r-1})$, or
\item[(c2)] $(\underbrace{1,\hdots,1}_{\cm{g-r+1}})$.
\end{itemize}
\end{itemize}
To see this, first note that, for any $x\in \cG$, the algebraic multiplicity of $L+\Phi(x)$ at $0$ is,
exactly, $\widetilde a$. Since the partial multiplicities at $0$ are $m_{r+1}\geq \cdots\geq m_g$,
with $m_i\geq n_i$, for $i=r+1,\hdots,g$, then either one of the partial multiplicities
$n_{r+1}\geq\cdots\geq n_g$ at $0$ in $L$ increases one unit, or either a new partial multiplicity
equal to $1$ appears after adding $E=\Phi(x)$. However, it is not possible to add or remove
just one odd partial multiplicity after perturbing by $E$, since this would imply that the
parity in the number of some of the odd-sized Jordan blocks associated with $0$ would change,
and this is not allowed by the $\top$-even structure. However, when increasing in one unit
just one partial multiplicity at $0$ in $L$, say $n_i$, either one odd partial multiplicity
is added or removed, depending on the parity of $n_i$. In order for the number of each odd-sized
Jordan blocks associated with $0$ to stay as an even number, the only possibility is that
either $n_i=\cm{n_{r+1}}$ or $n_i=\cm{n_{r+1}}-1$. The first case corresponds to cases (a), (b1), and (c1) above,
whereas the second one corresponds to cases (b2) and (c2).

With an argument identical to the one used in \cite{batzke-thesis},
we are going to prove that the generic partial multiplicities are just the ones in either (a),
(b1), or (c1), which essentially reduce to the same behavior, namely, one of the largest remaining partial
multiplicities increases in one unit.

Let us focus on case (b) first. By assumption on $d$ being minimal,
we have $(n_r=)n_{r+1}=\cdots=n_{r+ d}>n_{r+ d+1}\geq\cdots\geq n_g$ and $\cm{n_{r+1}}-1=n_{r+ d+1}$. Note that
necessarily $ d$ is odd as we are in the case of property~\eqref{property}.

Assume that the
change in case (b1) is not generic. Then the set $\mathcal B\subseteq\CC^{\lfloor\frac{3r}{2}\rfloor n}$
of all $x$ for which the partial multiplicities of $L+\Phi(x)$
at $0$ are $n_{r+1}=\cdots=n_{r+ d}=n_{r+d+1}+1> n_{r+ d+2}\geq\cdots\geq n_g$ is not contained in a proper
algebraic set. (Note that it must happen that $g-r\geq2$.)

Now, let us define the map
\[
\begin{array}{cccl}
\vm{\widetilde \Phi_d}:&(\CC^n)^{ d}&\longrightarrow&Even^\top_{d}\\
&u=(u_1,\dots,u_{d})&\mapsto&\widetilde\Phi(u)=u_1u_1^\top+\cdots+u_{ d}u_{d}^\top.
\end{array}
\]
and also consider the map
\[
\begin{array}{cccl}
\widetilde\Phi:&\CC^{\lfloor\frac{3r}{2}\rfloor n}\times(\CC^n)^{ d}&\longrightarrow &Even_{r+{ d}}^\top\\
&(x,u)&\mapsto&\widetilde \Phi(x,u)=\Phi(x)+\widetilde\Phi_{d}(u),
\end{array}
\]
Observe that the map $\widetilde\Phi$ may be different from the corresponding map
$\CC^{\lfloor\frac{3(r+{\colb d})}{2}\rfloor n}\longrightarrow Even_{r+ d}^\top$ from~\eqref{phione}.
(Indeed, the dimensions of the domains do not coincide if $r$ is odd.) Moreover, it is not
even clear whether the map $\widetilde \Phi$ is surjective.
Nevertheless, $\widetilde\Phi$ satisfies the hypotheses of Theorem~\ref{local} and thus by
part (1) of Theorem~\ref{local} we have that for any $(x,u)\in\mathcal B\times(\CC^n)^{ d}$
the list of partial multiplicities of $L+\widetilde\Phi(x,u)$ at $\lambda_i$ dominates the list
$n_{r+ d+1}+1> n_{r+ d+2}\geq\cdots\geq n_g$. The key observation is now that by \cite[Lemma 2.2]{batzke16} the set
$\mathcal B \times(\CC^n)^{ d}$ is not contained in a proper algebraic subset of
$\CC^{\lfloor\frac{3r}{2}\rfloor n}\times(\CC^n)^{d}$. If we can show that there exist $(x_0,u_0)$ of arbitrarily
small norm such that the partial multiplicities of $L+\widetilde\Phi(x_0,u_0)$ are $n_{r+d+1}\geq n_{r+ d+2}\geq\cdots\geq n_g$,
then by part (2) of Theorem~\ref{local} this hold for all $L+\widetilde\Phi(x,u)$ with $(x,u)$ from a generic
set $\widetilde\cG\subseteq\CC^{\lfloor\frac{3r}{2}\rfloor n}\times(\CC^n)^{d}$. Since the list
$n_{r+d+1}\geq n_{r+d+2}\geq\cdots\geq n_g$ does not dominate the list $n_{r+ d+1}+1> n_{r+ d+2}\geq\cdots\geq n_g$
this leads to a contradiction, because the sets $\widetilde\cG$ and $\mathcal B\times (\CC^n)^{ d}$ must have a
nonempty intersection, the first set being generic and the second set not being contained in a proper algebraic set.

Thus it remains to construct one particular example with the properties outlined above. To this end, note that,
by assumption on $k$, the pencil $L$ has the form
\[
L(\la)=\diag\big(L_1(\la),R\diag (J_{n_r}(-\la),J_{n_r}(\la)),\dots, R\diag (J_{n_r}(-\la),J_{n_r}(\la)),L_3(\la) ,\widetilde J(\la)\big),
\]
where the block $R\diag (J_{n_r}(-\la),J_{n_r}(\la))$ is repeated $\frac{ d+1}{2}$ times and $L_3(\lambda)$ contains
the blocks associated with the partial multiplicities $n_{r+k+2}\geq\cdots\geq n_g$. Then the desired example for a
perturbation that does the job is given by
\[
E(\la)=\gamma\diag (E_1,e_1e_1^\top+ e_{n_r+1}e_{n_r+1}^\top,\dots,e_1e_1^\top+ e_{n_r+1}e_{n_r+1}^\top, 0),
\]
where $E_1$ is as before, the block $e_1e_1^\top+ e_{n_r+1}e_{n_r+1}^\top$ is repeated $\frac{ d+1}{2}$ times, and $\gamma>0$
is chosen sufficiently small. Indeed note that, as before, all blocks in $L_1$ and all the paired blocks of size $n_r$ are
perturbed in such a way that all eigenvalues lie on circles around zero, so that the partial multiplicities of $L+E$ at $0$
are given by $n_{r+d+2}\geq\cdots\geq n_g$. Moreover, $E_1+e_1e_1^\top$ is a $\top$-even pencil of rank $r$ and thus, using
the surjectivity of $\Phi$, there exists $x\in\CC^{\lfloor\frac{3r}{2}\rfloor n}$ with $\Phi(x)=E_1+e_1e_1^\top$. Since
the remaining part of $E$ is of the form $u_1u_1^\top+\cdots+u_{d}u_{d}^\top$, this implies the existence of
$(x,u)\in\CC^{\lfloor\frac{3r}{2}\rfloor n}\times(\CC^n)^{ d}$ with $\widetilde\Phi(x,u)=E$.

To show that in case (c) the subcase (c1) is generic can be shown by contradiction in a similar way. In this case, there would be
two generic sets of $\top$-even perturbations with rank $r+1$ giving different behavior.

\cm{\emph{Subcase {\colb 2}b) $\la_0=0$ and $r$ odd and $\top$-odd structure}.}
\cm{In this case, the situation is similar to the one in the previous subcase, but we are also in a situation similar
to the one in Subcase {\colb1c}), i.e., the geometric multiplicity of the eigenvalue $\la_0=0$ after perturbation must be at least $g-r+1$.
But then, it is straightforward to show that the algebraic multiplicity $n_{r+1}+\cdots+n_g+1$ is not possible in this case.
Thus, we will construct a perturbation leading to the algebraic multiplicity $\widetilde a=n_{r+1}+\cdots+n_g+2$.
As before, let us assume that $L(\la)$ is given in $\top$-odd canonical form, so we can write it as
\[
L(\la)=\diag\big(L_1(\la),R\diag (J_{n_r}(-\la),J_{n_r}(\la)),L_2(\la) ,\widetilde J(\la)\big),
\]
where $L_1(\la)$ contains the first $r-1$ Jordan blocks associated with $0$, $ L_2(\la)$
contains the Jordan blocks associated with $0$ and with sizes $n_{r+2},\hdots,n_g$, and
$\widetilde J(\la)$ corresponds to the nonzero eigenvalues (including infinity). Since the pencil $L_1(\lambda)$ does not have
the property~\eqref{property}, we can construct a $\top$-odd perturbation $E_1(\lambda)$ as in \vm{subcase 1b) } such that the
eigenvalues of the perturbed pencil $L_1+E_1$ are all nonzero and simple. It remains to perturb the block
$R\diag (J_{n_r}(-\la),J_{n_r}(\la))$ in an appropriate way. For this} we consider the
perturbation $\gamma \lambda(e_1+e_{n_r+2})(e_1+e_{n_r+2})^\top$, with $e_1,e_{n_r+2}\in\CC^{2n_r}$. It is straightforward to see that
\begin{equation}\label{det-identity}
\det(R\diag (\cm{-J_{n_r}(-\la),J_{n_r}(\la)})+\gamma \lambda(e_1+e_{n_r+2})(e_1+e_{n_r+2})^\top=\la^{n_r+2}\left(\la^{n_r-2}+2\gamma\right)
\end{equation}
(a proof of this identity is provided in Appendix \ref{appendix}).
%
%
Moreover, since, for $\la_0=0$, the perturbed \cm{subpencil} has the same rank as the original one, the geometric multiplicity
of $\la_0=0$ at the perturbed \cm{subpencil} is the same one as in the original one, namely $2$.
Therefore, \cm{setting $E(\lambda)=\diag(E_1,\gamma \lambda(e_1+e_{n_r+2})(e_1+e_{n_r+2})^\top,0)$ and choosing
$\gamma$ sufficiently small}, the eigenvalues of the perturbed pencil $L+E$
are $\la_0=0$ with algebraic multiplicity \cm{$\widetilde a$, geometric multiplicity $g-r+1$}, and the remaining eigenvalues are all simple,
for $\gamma\neq0$. \cm{The argument that the geometric multiplicities are as claimed in Table~\ref{alter.table} is shown in a way that
is analogous to the one in Subcase {\colb1c}).}

\cm{\emph{Subcase 2c) $\lambda_0=\infty$}.}
The cases $\la_0=\infty$ where property \eqref{property} appears in Table \ref{alter.table} can be proved from the cases
$\la_0=0$ by using the reversal, which exchanges the roles of these two eigenvalues and takes $\top$-even pencils into
$\top$-odd ones and viceversa. In particular, the case $\la_0=\infty$ in the $\top$-odd structure can be obtained
from the case $\la_0=0$ in the $\top$-even structure, and the case $\la_0=\infty$ in the $\top$-even case can
be obtained from the case $\la_0=0$ in the $\top$-odd structure.
\end{proof}
\begin{theorem}\label{tpal-main.th} {\rm (Generic change under low-rank perturbations of
$\top$-palindromic pencils).}
Let $\la_1,\dots,\lambda_\kappa$ be the pairwise distinct eigenvalues of the regular
$n\times n$ $\top$-palindromic or $\top$-anti-palindromic
matrix pencil $L(\la)$, having the nonzero partial multiplicities $n_{i,1}\geq n_{i,2}\geq\cdots\geq n_{i,g_i}>0$,
for $k=1,\dots,\kappa$, respectively. Furthermore, let $r>0$ be an integer and let $\Phi$ be the map as in Remark {\rm\ref{param.rem}}.
Then, there is a generic set $\cG$ in $\CC^{\lfloor\frac{3r}{2}\rfloor n}$ such that, for
all $E(\la)\in\Phi(\cG)$, the perturbed pencil $L+E$ is regular and the partial multiplicities
of $L+E$ at $\la_0$ are the ones given in Table {\rm\ref{tpal.table}}, where \eqref{property}
is the same property as in the statement of Theorem {\rm\ref{talter-main.th}}.
(In particular, if $r\geq g_i$ then $\lambda_i$ is not an eigenvalue of $L+E$.) Furthermore,
all eigenvalues of $L+E$ different from those of $L$ are simple.
\begin{table}[htb!]\footnotesize
\begin{tabular}{|c|c|l|c|}\hline
Structure&e-val $\la_i$&case&multiplicities\\\hline\hline
\multirow{7}{*}{$\top$-palindromic}&
\multirow{2}{*}{$\la_i=1$}&$n_{i,r+1}$ odd and \eqref{property} holds&$\cm{(n_{i,r+1}+1,n_{i,r+2},\dots,n_{i,g_i})}$\\
&& otherwise&$\cm{(n_{i,r+1},n_{i,r+2},\dots,n_{i,g_i})}$\\\cline{2-4}    
&\multirow{4}{*}{$\la_i=-1$}& $r$ even, $n_{i,r+1}$ even, and \eqref{property} holds&$\cm{(n_{i,r+1}+1,n_{i,r+2},\dots,n_{i,g_i})}$\\
&& $r$ even, otherwise&$\cm{(n_{i,r+1},n_{i,r+2},\dots,n_{i,g_i})}$\\
&&$r$ odd, $n_{i,r+1}$ even, and \eqref{property} holds&$\cm{(n_{i,r+1}+1,n_{i,r+2},\dots,n_{i,g_i},1)}$\\
&& $r$ odd, otherwise&$\cm{(n_{i,r+1},n_{i,r+2},\dots,n_{i,g_i},1)}$\\\cline{2-4}     
&$\la_i\in\CC\setminus\{\pm 1\}$&all&$\cm{(n_{i,r+1},n_{i,r+2},\dots,n_{i,g_i})}$\\\hline\hline
\multirow{7}{*}{$\top$-anti-palindromic}&
\multirow{4}{*}{$\la_i=1$}& $r$ even, $n_{i,r+1}$ even, and \eqref{property} holds&$\cm{(n_{i,r+1}+1,n_{i,r+2},\dots,n_{i,g_i})}$\\
&&  $r$ even, otherwise&$\cm{(n_{i,r+1},n_{i,r+2},\dots,n_{i,g_i})}$\\
&& $r$ odd, $n_{i,r+1}$ even, and \eqref{property} holds&$\cm{(n_{i,r+1}+1,n_{i,r+2},\dots,n_{i,g_i},1)}$\\
&&  $r$ odd, otherwise&$\cm{(n_{i,r+1},n_{i,r+2},\dots,n_{i,g_i},1)}$\\\cline{2-4}     
&\multirow{2}{*}{$\la_i=-1$}&$n_{i,r+1}$ odd and \eqref{property} holds&$\cm{(n_{i,r+1}+1,n_{i,r+2},\dots,n_{i,g_i})}$\\
&& otherwise&$\cm{(n_{i,r+1},n_{i,r+2},\dots,n_{i,g_i})}$\\\cline{2-4}     
&$\la_i\in\CC\setminus\{\pm 1\}$&all&$\cm{(n_{i,r+1},n_{i,r+2},\dots,n_{i,g_i})}$\\\hline
\end{tabular}
\caption{\footnotesize Generic partial multiplicities at $\la_0$ for rank-$r$ $\top$-alternating perturbations}\label{tpal.table}
\end{table}
\end{theorem}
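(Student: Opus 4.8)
The plan is to reduce the statement to Theorem~\ref{talter-main.th} by means of the Cayley transformations $\mathcal{C}_{-1}$ and $\mathcal{C}_{+1}$ from~\eqref{cayley}, exactly as was done in the proofs of Theorems~\ref{rank1-tpal.th} and~\ref{rank1-tantipal.th}. Recall that $\mathcal{C}_{+1}$ maps $\top$-palindromic pencils to $\top$-even pencils and $\mathcal{C}_{-1}$ maps $\top$-anti-palindromic pencils to $\top$-even pencils \cite[Theorem~2.7]{4m-good}; that both transformations are linear bijections on the space of $n\times n$ pencils, preserve the rank (so, in particular, they map regular pencils to regular pencils), are inverse to each other up to the nonzero factor $2$ (by \cite[Proposition~2.5]{4m-good}), and preserve the Weierstra{\ss} structure while acting on the eigenvalues through the M\"obius maps $\mu\mapsto\frac{\mu-1}{\mu+1}$ for $\mathcal{C}_{+1}$ (so that the eigenvalues $1$ and $-1$ are sent to $0$ and $\infty$, respectively) and $\mu\mapsto\frac{1+\mu}{1-\mu}$ for $\mathcal{C}_{-1}$ (so that $1$ and $-1$ are sent to $\infty$ and $0$, respectively).

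The first point to settle is that the parameterization $\Phi$ of Remark~\ref{param.rem} for the $\top$-palindromic structure corresponds, under $\mathcal{C}_{+1}$, to the parameterization $\Phi$ for the $\top$-even structure, up to an invertible change of parameters (and likewise for the $\top$-anti-palindromic structure under $\mathcal{C}_{-1}$). This follows from the computations already carried out in the proof of Theorem~\ref{rank1-tpal.th}: $\mathcal{C}_{+1}$ sends the $\top$-even rank-$2$ summand $v_jw_j(\la)^\top+w_j(-\la)v_j^\top$ to the $\top$-palindromic rank-$2$ summand $v_j\widetilde w_j^\top+(\rev\widetilde w_j)v_j^\top$ with $\widetilde w_j(\la)=(1+\la)w_j\!\left(\frac{\la-1}{1+\la}\right)$, and the map $w_j\mapsto\widetilde w_j$ is a linear isomorphism on the space of vector pencils of degree at most one (cf.~\eqref{reversal-w}); it likewise sends the single summand $uu^\top$ (present when $r$ is odd) to $(1+\la)uu^\top$, leaving $u$ unchanged. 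Hence the map $\Phi$ for the $\top$-palindromic structure equals $\mathcal{C}_{+1}\circ\Phi_{\mathrm{even}}\circ\Theta$ for a bijective linear (in particular polynomial) change of parameters $\Theta$ on $\CC^{\lfloor 3r/2\rfloor n}$ with polynomial inverse, so that a generic subset of the $\top$-even parameter space corresponds to a generic subset of the $\top$-palindromic parameter space, and conversely.

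With these preliminaries, the argument is the following. Given a regular $\top$-palindromic pencil $L(\la)$, put $\widehat L(\la):=\mathcal{C}_{+1}(L)(\la)$; this is a regular $\top$-even pencil whose eigenvalues are the images of those of $L$ under $\mu\mapsto\frac{\mu-1}{\mu+1}$, with the same partial multiplicities, so $\widehat L$ satisfies property~\eqref{property} at $0$ (respectively at $\infty$) if and only if $L$ satisfies~\eqref{property} at $1$ (respectively at $-1$), since~\eqref{property} depends only on the list of partial multiplicities. Theorem~\ref{talter-main.th} provides a generic set $\widehat\cG\subseteq\CC^{\lfloor 3r/2\rfloor n}$ such that for every $\top$-even perturbation $\widehat E$ parameterized from $\widehat\cG$, the pencil $\widehat L+\widehat E$ is regular, has the partial multiplicities prescribed by Table~\ref{alter.table}, and all its newly generated eigenvalues are simple. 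Transporting $\widehat\cG$ back through $\Theta$ and using $\mathcal{C}_{+1}(L+E)=\widehat L+\widehat E$ (by linearity of $\mathcal{C}_{+1}$), we obtain a generic set $\cG\subseteq\CC^{\lfloor 3r/2\rfloor n}$ such that for all $E(\la)\in\Phi(\cG)$ the pencil $L+E$ is regular, its newly generated eigenvalues are simple (simplicity being invariant under the eigenvalue-preserving bijection induced by $\mathcal{C}_{+1}$), and the partial multiplicities of $L+E$ at $\la_i$ are those of $\widehat L+\widehat E$ at the image of $\la_i$; translating Table~\ref{alter.table} through the correspondence $1\leftrightarrow 0$, $-1\leftrightarrow\infty$, and $\la_i\leftrightarrow$ an eigenvalue outside $\{0,\infty\}$ otherwise, yields exactly the $\top$-palindromic half of Table~\ref{tpal.table}. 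The $\top$-anti-palindromic case is verbatim the same with $\mathcal{C}_{-1}$ in place of $\mathcal{C}_{+1}$ and the eigenvalue correspondence $1\leftrightarrow\infty$, $-1\leftrightarrow 0$, which turns Table~\ref{alter.table} into the $\top$-anti-palindromic half of Table~\ref{tpal.table}.

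The step that demands the most care is the one in the second paragraph: one has to make sure that the reparameterization $\Theta$ intertwining the Cayley transform with the two parameterization maps is genuinely \emph{bijective} and polynomial with polynomial inverse --- so that it transforms proper algebraic sets into proper algebraic sets in both directions --- which requires keeping track of the degree structure of the vector pencils $w_j$ and of the extra $uu^\top$ term that appears when $r$ is odd. Once this is in place, no new perturbation examples are needed; everything else is bookkeeping via the M\"obius action on the eigenvalues.
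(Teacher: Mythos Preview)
Your approach is essentially identical to the paper's: reduce to Theorem~\ref{talter-main.th} via the Cayley transformations, using that $\mathcal C_{+1}$ sends $\top$-palindromic to $\top$-even (and for the anti-palindromic case the paper actually routes through the $\top$-odd structure, but your route via $\mathcal C_{-1}$ to $\top$-even is equally valid and yields the same table). The paper's proof carries out the same bookkeeping you describe, explicitly exhibiting~\eqref{cayley-tpalrank1} to show that $\mathcal C_{+1}(E)$ has the form~\eqref{teven-rank1}, then invoking Theorem~\ref{talter-main.th} for $\mathcal C_{+1}(L)$ and transporting the generic set back.

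There is, however, a consistent directional slip in your second paragraph. You write that ``$\mathcal C_{+1}$ sends the $\top$-even rank-$2$ summand \dots\ to the $\top$-palindromic rank-$2$ summand'' and that ``$\Phi_{\mathrm{pal}}=\mathcal C_{+1}\circ\Phi_{\mathrm{even}}\circ\Theta$'', but the formula you give, $\widetilde w_j(\la)=(1+\la)w_j\!\bigl(\tfrac{\la-1}{1+\la}\bigr)$, is the action of $\mathcal C_{-1}$, not $\mathcal C_{+1}$; likewise $\mathcal C_{-1}(uu^\top)=(1+\la)uu^\top$ while $\mathcal C_{+1}(uu^\top)=(1-\la)uu^\top$. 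The correct relation is $\mathcal C_{+1}\circ\Phi_{\mathrm{pal}}=\Phi_{\mathrm{even}}\circ\Theta$ (equivalently $\Phi_{\mathrm{pal}}=\tfrac12\,\mathcal C_{-1}\circ\Phi_{\mathrm{even}}\circ\Theta$), which is consistent with your own first paragraph. Once this is straightened out, your argument and the paper's coincide.
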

\begin{proof}
We just prove the $\top$-palindromic case, since the $\top$-anti-palindromic one follows similar reasonings.

Let $L(\la)$ be a given $\top$-palindromic pencil satisfying the conditions in the statement,
and let $E(\la)$ be another $\top$-palindromic pencil of the form \eqref{tpal-rank1}.
Let ${\cal C}_{+1}$ and ${\cal C}_{-1}$ be the Cayley transforms in \eqref{cayley}. Then
\[
{\cal C}_{+1}(L+E)={\cal C}_{+1}(L)+{\cal C}_{+1}(E),
\]
with both the pencil in the left-hand side and the ones in the right-hand side being $\top$-even
\cite[Th. 2.7]{4m-good}. Moreover, if $r=\rank E$ is odd, then
\begin{equation}\label{cayley-tpalrank1}
\begin{array}{ccl}
{\cal C}_{+1}(E)(\mu)&=&{\cal C}_{+1}((1+\la)uu^\top+v_1w_1^\top+\cdots+v_{(r-1)/2}w_{(r-1)/2}^\top\\
&&+(\rev w_1)v_1^\top+\cdots+(\rev w_{(r-1)/2})v_{(r-1)/2}^\top)\\
&=&2uu^\top+v_1\widehat w_1(\mu)^\top+\cdots+v_{(r-1)/2}\widehat w_{(r-1)/2(\mu)}^\top\\
&&+(\widehat w_1(-\mu))v_1^\top+\cdots+ ( \widehat w_{(r-1)/2}(-\mu))v_{(r-1)/2}^\top),
\end{array}
\end{equation}
with $\widehat w_i(\mu)={\cal C}_{+1}(w_i)(\mu)=(1-\mu)w(\frac{1+\mu}{1-\mu})$, for $i=1,\hdots,(r-1)/2$.
The second sum in the last term of \eqref{cayley-tpalrank1} follows by using similar identities to the
ones in \eqref{reversal-w}, which allow us to see that
\[
\begin{array}{ccl}
{\cal C}_{+1}(\rev w_i)(\mu)&=&{\cal C}_{+1}(\la w_i(1/\la))(\mu)=(1-\mu)\cdot\frac{1+\mu}{1-\mu}\cdot
w_i\left(\frac{1-\mu}{1+\mu}\right)\\
&&=(1+\mu)w_i\left(\frac{1-\mu}{1+\mu}\right)=\widehat w_i(-\mu).
\end{array}
\]
If $r$ is even, then we get a similar expression according to the expression for $E(\la)$ in \eqref{tpal-rank1}.
This means that the pencil ${\cal C}_{+1}(E)$ is of the form \eqref{teven-rank1}. Then, by
Theorem \ref{talter-main.th}, there is a generic set $\cG$ in $\CC^{\lfloor\frac{3r}{2}\rfloor n}$ such that,
for all $x\in \cG$, the perturbed pencil $({\cal C}_{+1}(L)+\Phi(x))(\mu)$ is regular and
the partial multiplicities at $\mu_0$ are the ones given in Table {\rm\ref{alter.table}},
replacing $\mu_0$ by $\la_i$, with $\mu_0=(\la_i-1)/(\la_i+1)$ if $\la_i\neq 1$, and $\mu_0=\infty$
if $\la_i=1$, and furthermore, such that all eigenvalues that are different from those of ${\cal C}_{+1}(L)$ are simple.
Note that $\Phi$ is the map that takes a set of parameters
$x\in\CC^{\lfloor\frac{3r}{2}\rfloor n}$ to a pencil like in \eqref{cayley-tpalrank1}.

Applying the Cayley transformation ${\cal C}_{-1}$ we conclude that, for any $x\in \cG$,
the pencil $L+{\cal C}_{-1}(\Phi(x))$ is regular and has the partial multiplicities at $\la_i$ as
given in Table \ref{tpal.table}, while all eigenvalues that are different from those of $L$ are simple.
But, since $\Phi(x)={\cal C}_{+1}(E)(\mu)$, then
${\cal C}_{-1}(\Phi(x))=E(\la)$, and this concludes the proof for this case.

For the $\top$-anti-palindromic case just replace ${\cal C}_{-1}$ by ${\cal C}_{+1}$ and
vice versa, and refer to the $\top$-odd case instead of the $\top$-even one.
\end{proof}

Next, we turn to the skew-symmetric structure. As it is well known, the algebraic multiplicity of each
eigenvalue of a skew-symmetric pencil is necessarily even (see, e.g., \cite[Theorem 2.18]{batzke-thesis}).
As a consequence, the newly generated eigenvalues by a structure-preserving perturbation will generically
be double eigenvalues instead of simple ones.
\begin{theorem}\label{skews-main.th}{\rm (Generic change under low-rank perturbations of skew-symmetric pencils).}
Let $L(\lambda)$ be a regular $n\times n$ skew-symmetric matrix pencil and let $\lambda_1,\dots,\lambda_\kappa\in\mathbb C$
be its pairwise distinct eigenvalues having the nonzero partial multiplicities
$n_{i,1}\geq n_{i,2}\geq\cdots\geq n_{i,g_i}>0$, for $i=1,\dots,\kappa$, respectively.
(We highlight that both $n$ and all values $n_{i,j}$, $j=1,\dots,g_i$, $i=1,\dots,\kappa$ are necessarily even.)
Furthermore, let $r$ be a nonzero even integer and let $\Phi$ be the map as in Remark~{\rm\ref{param.rem}}.
Then, there is a generic set $\cG$ in $\CC^{\frac{3rn}{2}}$ such that, for all $E(\la)\in\Phi(\cG)$, the perturbed
pencil $L+E$ is regular and the partial multiplicities of $L+E$ at $\la_i$ are $n_{i,r+1}\geq\cdots\geq n_{i,g_i}$, for $i=1,\dots,\kappa$.
Furthermore, all eigenvalues of $L+E$ that are \vm{not} eigenvalues of $L$ have algebraic multiplicity precisely two.
\end{theorem}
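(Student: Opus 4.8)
The plan is to derive the theorem from Theorem~\ref{local}, applied with $\FF=\CC$, $m=\frac{3rn}{2}$, and $\mu=2$, in exact parallel with the proof of Theorem~\ref{herm-main.th}. The choice $\mu=2$ is dictated by the fact that every eigenvalue of a regular skew-symmetric pencil has even algebraic multiplicity (see, e.g., \cite[Theorem 2.18]{batzke-thesis}), since the determinant of a skew-symmetric matrix is the square of its Pfaffian. Concretely, the map $\Phi$ of Remark~\ref{param.rem} for the skew-symmetric structure (where $k=1$, $s=r/2$, $p_s=0$, $m_s=\frac{3rn}{2}$) is a polynomial, surjective map $\CC^{3rn/2}\to SSym_r$ with $\Phi(0)=(0,0)$ and $\operatorname{rank}\Phi(x)\le r$ for all $x$, and every pencil $L+\Phi(x)$ is skew-symmetric; hence hypotheses (i)--(iii) of Theorem~\ref{local} hold with $\mu=2$. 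By part~(1) of Theorem~\ref{local}, whenever $L+\Phi(x)$ is regular its list of partial multiplicities at $\la_i$ dominates $(n_{i,r+1},\dots,n_{i,g_i})$, so the algebraic multiplicity $a_i^{(x)}$ of $\la_i$ is at least $\widetilde a_i:=n_{i,r+1}+\cdots+n_{i,g_i}$. Therefore, by the ``in addition'' clause of part~(2), it suffices to exhibit, for each eigenvalue $\la_i$ and each $\varepsilon>0$, one skew-symmetric perturbation $E=\Phi(x_{0,i})$ of norm $<\varepsilon$ for which $L+E$ is regular, $a_i^{(x_{0,i})}=\widetilde a_i$, and all eigenvalues of $L+E$ not belonging to $L$ have algebraic multiplicity precisely $2$.

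To construct this example I would argue as in Theorem~\ref{herm-main.th}: using invariance of genericity under congruence, assume $L$ is in skew-symmetric canonical form, fix $\la_0:=\la_i$ (the case $\la_0=\infty$ following from $\rev L$, which is again skew-symmetric), and use that at $\la_0$ the partial multiplicities come in equal pairs, so that $g_i$ is even and the $\min(r,g_i)$ largest of them form $t:=\min(r/2,g_i/2)$ complete pairs, each realized in the canonical form by a $2k_j\times 2k_j$ block $B_j(\la)=\twotwo{0}{M_j(\la)}{-M_j(\la)^\top}{0}$ with $\det M_j(\la)=\pm(\la_0-\la)^{k_j}$. For $j=1,\dots,t$ I would add to $B_j$ the skew-symmetric rank-$2$ pencil $E_j(\la)=\beta_j\big(v_jw_j^\top-w_jv_j^\top\big)$ of the form~\eqref{sksym-rank11}, with $v_j,w_j$ suitable canonical vectors in $\CC^{2k_j}$, which inserts a single nonzero entry $\beta_j$ at an appropriate corner of $M_j(\la)$; a Laplace expansion shows that this replaces $\det M_j(\la)=\pm(\la_0-\la)^{k_j}$ by $\pm\big((\la_0-\la)^{k_j}+(-1)^{k_j-1}\beta_j\big)$, and because $B_j+E_j$ is skew-symmetric, $\det(B_j+E_j)=\operatorname{Pf}(B_j+E_j)^2$ equals $\big((\la_0-\la)^{k_j}+(-1)^{k_j-1}\beta_j\big)^2$ up to a nonzero constant. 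Thus $B_j+E_j$ has $k_j$ eigenvalues, all different from $\la_0$, lying on the circle of radius $|\beta_j|^{1/k_j}$ around $\la_0$ and each of algebraic multiplicity exactly $2$, while $\la_0$ is no longer an eigenvalue of $B_j+E_j$. Setting $E=\diag(E_1,\dots,E_t,0,\dots,0)$ produces a skew-symmetric pencil of rank $2t\le r$, hence $E\in SSym_r$; by surjectivity and non-injectivity of $\Phi$ there is $x_{0,i}$ with $\Phi(x_{0,i})=E$ and $\|x_{0,i}\|<\varepsilon$ once the $\beta_j$ are small. Only $B_1,\dots,B_t$ are altered, so $L+E$ still carries the Jordan blocks at $\la_0$ of sizes $n_{i,2t+1},\dots,n_{i,g_i}=n_{i,r+1},\dots,n_{i,g_i}$, giving $a_i^{(x_{0,i})}=\widetilde a_i$ (and $\la_0$ ceases to be an eigenvalue when $r\ge g_i$); choosing the nonzero $\beta_j$ of pairwise distinct, sufficiently small moduli makes all these circles disjoint from each other and from the spectrum of the untouched part, so $L+E$ is regular and every eigenvalue of $L+E$ not belonging to $L$ is one of the new roots above, of algebraic multiplicity exactly $2$. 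Theorem~\ref{local} then yields the generic set $\cG\subseteq\CC^{3rn/2}$, and its ``in addition'' clause forces the partial multiplicities at $\la_i$ to be precisely $n_{i,r+1}\ge\cdots\ge n_{i,g_i}$.

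The routine work is the same as in Theorem~\ref{herm-main.th}: checking $E\in SSym_r$, producing $x_{0,i}$ of small norm from the surjectivity of $\Phi$, and separating the new eigenvalues by tuning the parameters $\beta_j$. The step deserving genuine care is the determinant computation for the perturbed blocks $B_j+E_j$: one has to verify that a single-corner perturbation of $M_j$ converts $\det M_j$ into $(\la_0-\la)^{k_j}+(-1)^{k_j-1}\beta_j$, and that the skew-symmetric structure makes $\det(B_j+E_j)$ the \emph{square} of that polynomial, so that its $k_j$ simple roots appear as genuine \emph{double} eigenvalues of $L+E$ rather than simple ones; this is precisely where the hypothesis $\mu=2$ of Theorem~\ref{local} enters, and it is what distinguishes the skew-symmetric case from all the others treated so far. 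The only other point, that $\widetilde a_i=n_{i,r+1}+\cdots+n_{i,g_i}$ is the minimal algebraic multiplicity attainable at $\la_i$, is supplied automatically by part~(1) of Theorem~\ref{local} together with the fact that the constructed example attains it.
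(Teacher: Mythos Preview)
Your proof is correct and invokes the same localization principle (Theorem~\ref{local} with $\mu=2$) as the paper, but the construction of the witnessing perturbation differs. You work block-by-block in the skew-symmetric canonical form, perturbing each paired block $B_j=\twotwo{0}{M_j}{-M_j^\top}{0}$ by a rank-two corner modification of $M_j$ and reading off the doubled eigenvalues from $\det(B_j+E_j)=\pm\big(\det M_j'\big)^2$, in direct analogy with the Hermitian argument of Theorem~\ref{herm-main.th}. The paper instead observes once and for all that any regular skew-symmetric pencil is congruent to one of the global shape $\twotwo{0}{D(\lambda)}{-D(\lambda)}{0}$, and then simply applies the already-proved \emph{unstructured} result, Theorem~\ref{gen-main.th}, to the half-size pencil $D$: a rank-$r/2$ perturbation $\widetilde E$ of $D$ yields the structured perturbation $E=\twotwo{0}{\widetilde E}{-\widetilde E}{0}$ of rank $r$, and the doubling of new eigenvalues is immediate from $\det(L+E)=\pm\big(\det(D+\widetilde E)\big)^2$. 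The paper's route is shorter because all determinant computations and parameter-tuning are delegated to Theorem~\ref{gen-main.th}; your route is self-contained and makes the pairing mechanism fully explicit, at the cost of redoing those computations by hand.
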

\begin{proof}
Without loss of generality we may assume that $L(\lambda)$ is of the form
\[
L(\lambda)=\mat{cc}0&D(\lambda)\\ -D(\lambda)&0\rix,
\]
where $D(\lambda)$ is a regular pencil of size $\frac{n}{2}\times\frac{n}{2}$. This assumption can be made since $L(\lambda)$ is
congruent to a pencil in the indicated form - a fact that follows easily by assuming that
$L(\lambda)$ is in the canonical form of \cite[Theorem 2.18]{batzke-thesis} and then applying simultaneous row and
column permutations. Clearly, the eigenvalue $\lambda_i$ of $D(\lambda)$ has the partial multiplicities
$\frac{n_{i,1}}{2}\geq \frac{n_{i,2}}{2}\geq\cdots\geq \frac{n_{i,g_i}}{2}$. By the proof of Theorem~\ref{gen-main.th},
there exists $\widetilde x\in\mathbb C^{\frac{3rn}{4}}$ of arbitrarily small norm such that
$\widetilde E(\lambda)=\Phi_{\frac{r}{2}}(\widetilde x)$ (with $\Phi_{\frac{r}{2}}$ being the map from Definition~\ref{param.def.gen})
is an $\frac{n}{2}\times\frac{n}{2}$ pencil of rank $\frac{r}{2}$ such that $D+\widetilde E$ is regular, has the partial multiplicities
$\frac{n_{i,r+1}}{2}\geq\cdots\geq \frac{n_{i,g_i}}{2}$ at $\lambda_i$, for $i=1,\dots,\kappa$, and all its eigenvalues
that are different from those of $D$ are simple. Then setting
\[
E(\lambda)=\mat{cc}0&\widetilde E(\lambda)\\ -\widetilde E(\lambda)&0\rix,
\]
it follows that $E$ is skew-symmetric and has rank $r$. Furthermore, due to the surjectivity of $\Phi$ it follows
that there exists $x\in\CC^{\frac{3rn}{2}}$ such that $\Phi(x)=E$ and it is straightforward to check that
$x$ can be chosen to be of the same norm as $\widetilde x$. Obviously, $L+E$ now has the partial multiplicities
$n_{i,r+1}\geq\cdots\geq n_{i,g_i}$ at $\lambda_i$ for $i=1,\dots,\kappa$,
and all eigenvalues of $L+E$ that are no{\colb t} eigenvalues of $L$ have algebraic multiplicity precisely two.
Then applying Theorem~\ref{local} with $\mu=2$ yields the desired result.
\end{proof}

As for the remaining structures (skew-Hermitian, $*$-alternating, $*$-palindromic, and
$*$-anti-palindromic) a similar result to Theorem \ref{herm-main.th} can be obtained
either from this result directly using the observations in the paragraph right after
Theorem \ref{rank1-tantipal.th} (skew-Hermitian, $*$-alternating) or using appropriate Cayley
transformations as in the proof of Theorem \ref{tpal-main.th} ($*$-palindromic,
and $*$-anti-palindromic). We gather all these results in just one statement in Theorem \ref{other-main.th}.
\begin{theorem}\label{other-main.th} {\rm (Generic change under low-rank perturbations
of skew-Hermitian, $*$-alternating, $*$-palindromic, and $*$-anti-palindromic pencils).}
Let $\la_1,\dots,\lambda_\kappa$ be the pairwise distinct eigenvalues (finite or infinite) of the regular $n\times n$ skew-Hermitian,
$*$-alternating, $*$-palindromic, or $*$-anti-palindromic matrix pencil $L(\la)$, with
nonzero partial multiplicities $n_{i,1}\geq n_{i,2}\geq\cdots\geq n_{i,g_i}>0$ for $i=1,\dots,\kappa$, respectively.
Furthermore, let $r$ be a positive integer and,
for each $0\leq s\leq\lfloor r/2\rfloor$, let $\Phi_s$ be the map as in Remark
{\rm\ref{param.rem}}. Then, there is a generic set $\cm{\cG_s}$ in $\cm{\mathbb R^\ell\times}\CC^{(r+s)n}$ such that,
for all $E(\la)\in\Phi_s(\cm{\cG_s})$, the perturbed pencil $(L+E)(\la)$ is regular and the
partial multiplicities of $L+E$ at $\la_i$ are $n_{i,r+1}\geq\cdots\geq n_{i,g_i}$ for $i=1,\dots,\kappa$. In particular,
if $g_i\leq r$ then $\lambda_i$ is not an eigenvalue of $L+E$. Furthermore, all eigenvalues of $L+E$
that are not eigenvalues of $L$ are simple.
\end{theorem}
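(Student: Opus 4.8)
The plan is to deduce all four cases from the already established Hermitian case, Theorem~\ref{herm-main.th}, by transporting the whole picture through a structure-changing transformation $\cT$, essentially the one already used in Section~\ref{rank1.sec} to obtain the rank-$1$ decompositions. On the coefficient level of a pencil $A+\la B$ I would take $\cT(A,B)=(\iunit A,\iunit B)$ when $L$ is skew-Hermitian, $\cT(A,B)=(A,\iunit B)$ (so that $\cT(L)(\la)=L(\iunit\la)$) when $L$ is $*$-even, $\cT(A,B)=(B,\iunit A)$ (reversal followed by the previous map) when $L$ is $*$-odd, and $\cT={\cal C}_{+1}$ (resp. $\cT={\cal C}_{-1}$), which on coefficients reads $(A,B)\mapsto(A+B,B-A)$ (resp. $(A,B)\mapsto(A-B,A+B)$), when $L$ is $*$-palindromic (resp. $*$-anti-palindromic). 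In each case $\cT(L)$ is Hermitian by the observations recorded after Theorem~\ref{rank1-tantipal.th} together with \cite[Theorem~2.7]{4m-good}; for the palindromic cases $\cT(L)$ is first $*$-even and one composes with the $*$-even-to-Hermitian substitution, and, as in the proofs of Theorem~\ref{local} and Theorem~\ref{tpal-main.th}, one first replaces $L$ by $A+\la(\alpha A+B)$ for a suitable $\alpha\in\,]0,1[$ so that the Cayley substitution does not degenerate.

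Next I would record the properties of $\cT$. Being, in every case, an invertible linear map on $\CC^{n\times n}\times\CC^{n\times n}$ — hence, after the identification of $\CC^k$ with $\RR^{2k}$, an invertible polynomial map over $\RR$ — the map $\cT$ preserves the rank of a pencil, carries generic sets to generic sets and pulls them back, and restricts to a bijection between structured $n\times n$ pencils of rank at most $r$ and Hermitian ones of rank at most $r$. The induced correspondence of eigenvalues is a Möbius transformation of $\CC\cup\{\infty\}$, so it preserves partial multiplicities and is compatible with the treatment of $\infty$ (and of $\pm1$ in the palindromic cases). The decisive point is that the distinguished eigenvalues of the structured pencil — real in the skew-Hermitian case, purely imaginary in the $*$-alternating case, unimodular in the $*$-(anti-)palindromic case — are all sent to real eigenvalues of $\cT(L)$, and the Hermitian canonical form of Theorem~\ref{thm:canformherm} puts no parity constraint on the partial multiplicities at a real eigenvalue (only a sign characteristic, whose signs are unconstrained). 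This is why no analogue of property~\eqref{property} and of Tables~\ref{alter.table}--\ref{tpal.table} appears here and the generic change is the plain one of Theorem~\ref{herm-main.th}. Finally, by the very way the decompositions in Theorems~\ref{rank1-sk.th}--\ref{rank1-starantipal.th} were derived from the Hermitian one, $\cT^{-1}$ maps $\cC_s^{\H}$ into $\cC_s^{\sym}$ for each $0\le s\le\lfloor r/2\rfloor$.

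With this in hand the argument is a formal transport. Fix the structure, $r$ and $s$, and apply Theorem~\ref{local} with $\FF=\RR$ and $\mu=1$ to $L$ and the parameterization $\Phi_s$ of Remark~\ref{param.rem}; it then suffices to exhibit, for each eigenvalue $\la_i$ of $L$, one pencil $E=\Phi_s(x_i)\in\cC_s^{\sym}$ with $\|x_i\|$ arbitrarily small such that $L+E$ is regular, has partial multiplicities $n_{i,r+1}\ge\cdots\ge n_{i,g_i}$ at $\la_i$, and has all new eigenvalues simple. To build it I would set $\widehat L=\cT(L)$, let $\widehat{\la}_i$ be the image of $\la_i$, and invoke the proof of Theorem~\ref{herm-main.th} (for the same $s$) to get a Hermitian $\widehat E\in\cC_s^{\H}$ of arbitrarily small norm with $\widehat L+\widehat E$ regular, with partial multiplicities $n_{i,r+1}\ge\cdots\ge n_{i,g_i}$ at $\widehat{\la}_i$, and with all new eigenvalues simple; then $E:=\cT^{-1}(\widehat E)\in\cC_s^{\sym}$, and by surjectivity of $\Phi_s$ (Remark~\ref{param.rem}) $E=\Phi_s(x_i)$, with $x_i$ of arbitrarily small norm just as in the proof of Theorem~\ref{herm-main.th}. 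Since $L+E=\cT^{-1}(\widehat L+\widehat E)$ and $\cT$ preserves regularity, rank, the eigenvalue/partial-multiplicity correspondence and simplicity, $L+E$ has exactly the required behavior at $\la_i$; Theorem~\ref{local} then produces the generic set $\cG_s$.

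I expect the main obstacle to be the bookkeeping of the second paragraph: for each of the four structures one has to pin down the correct $\cT$, verify carefully that it changes the structure as stated, that $\cT^{-1}$ really lands inside $\cC_s^{\sym}$, and — in the palindromic cases — that the Cayley substitution meshes with the reversal identities~\eqref{reversal-w} as in the proof of Theorem~\ref{tpal-main.th}, so that the eigenvalues $\pm1$ and $\infty$ are tracked correctly. This runs entirely in parallel to Section~\ref{rank1.sec} and to the proofs of Theorems~\ref{tpal-main.th} and~\ref{skews-main.th}, so it demands care but no new idea; everything else is the formal transport described above.
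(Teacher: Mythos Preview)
Your proposal is correct and follows exactly the route the paper sketches in the paragraph preceding the theorem: transport everything to the Hermitian case via the maps recorded after Theorem~\ref{rank1-tantipal.th} (multiplication by $\iunit$, the substitution on the leading coefficient, reversal) and, for the $*$-(anti-)palindromic structures, via Cayley transforms as in the proof of Theorem~\ref{tpal-main.th}. One minor correction: drop the preprocessing step ``replace $L$ by $A+\la(\alpha A+B)$ so that the Cayley substitution does not degenerate'' --- the Cayley transforms $\cC_{\pm1}$ on pencils are always well-defined and rank-preserving, the proof of Theorem~\ref{tpal-main.th} performs no such step, and this substitution would in fact destroy the $*$-palindromic structure you are trying to exploit.
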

%
 The results presented in Theorems \ref{sym-main.th}--\ref{tpal-main.th} extend the ones in \cite{batzke14}
\cm{and \cite{batzke16}} from rank-$1$ \cm{and special rank-$2$ perturbations} to low-rank perturbations of matrix pencils
\cm{with symmetry structures}. Even though some of the arguments and techniques in the proof of
Theorems \ref{sym-main.th}--\ref{tpal-main.th} are analogous to some of the ones used in \cm{\cite{batzke14,batzke16}}, the main approach,
which uses the parameterizations constructed from the rank-$1$ decompositions given in Section \ref{rank1.sec}, is different
to the one followed in \cm{\cite{batzke14,batzke16}}.

If we compare Theorems \ref{herm-main.th}--\ref{other-main.th} with Theorem \ref{gen-main.th}, we will realize that, in most cases,
the generic behavior for \cm{pencils with symmetry structures} coincides with the one for \cm{general} pencils. However,
there are several cases in Theorems \ref{talter-main.th} and \ref{tpal-main.th} where this behavior is different.
In these cases, the \cm{$\top$-alternating and $\top$-palindromic structures} impose additional restrictions that must be
fulfilled in the canonical form, which prevent some behaviors, that in the \cm{general} case are allowed, to occur under
\cm{structure-preserving} perturbations of pencils having these symmetry structures.

\section{Outlook on the real case}\label{real.sec}

So far, we have restricted ourselves to the complex case only. The main reason for this is the
surprising fact that in general real versions of rank-$1$ decompositions as in Theorem~\ref{rank1-herm.th}
or Theorem~\ref{rank1-sym.th} need not exist as the following example shows.
\begin{example}\rm
Consider the real symmetric pencil
\[
E(\lambda)=2\mat{cc}0&1\\ 1&0\rix-\lambda\mat{cc}1&0\\ 0&-1\rix=\mat{cc}-2\lambda&2\\ 2&2\lambda\rix.
\]
This pencil has the eigenvalues $\iunit,-\iunit$ and a decomposition in complex Hermitian rank-$1$ pencils is
given by
\[
E(\lambda)=\mat{c}1\\ -\iunit\rix\mat{cc}-\lambda&2+\iunit\lambda\rix+
\mat{c}-\lambda\\ 2-\iunit\lambda\rix\mat{cc}1&\iunit\rix
\]
while for a decomposition into complex symmetric rank-$1$ pencil pencils we can take
\[
E(\lambda)=(\lambda+\iunit)\mat{c}-\iunit\\ 1\rix\mat{cc}-\iunit&1\rix+(\lambda-\iunit)\mat{c}\iunit\\ 1\rix\mat{cc}\iunit&1\rix.
\]
However, $E(\lambda)$ does not allow a decomposition of the form
\begin{equation}\label{eq:realnotpossible}
E(\lambda)=v(w+\lambda x)^\top+(w+\lambda x)v^\top\quad\mbox{with }\;
v=\mat{c}v_1\\ v_2\rix,\;w=\mat{c}w_1\\ w_2\rix,\;x=\mat{c}x_1\\ x_2\rix\in\mathbb R^2.
\end{equation}
Indeed,~\eqref{eq:realnotpossible} leads to the contradictory equations
\[
\mat{cc}2v_1w_1&v_1w_2+v_2w_1\\ v_1w_2+v_2w_1&2v_2w_2\rix=\mat{cc}0&2\\ 2&0\rix\quad\mbox{and}\quad
\mat{cc}2v_1x_1&v_1x_2+v_2x_1\\ v_1x_2+v_2x_1&2v_2x_2\rix=\mat{cc}-2&0\\ 0&2\rix,
\]
since these imply $v_1,v_2\neq 0$ and thus $w_1=w_2=0$, which contradicts $v_1w_2+v_2w_1=2$.
But $E(\lambda)$ does not allow a decomposition of the form
\[
E(\lambda)=(a_1+\lambda b_1)uu^\top +(a_2+\lambda b_2)vv^\top\quad\mbox{with }\;
u=\mat{c}u_1\\ u_2\rix,\;v=\mat{c}v_1\\ v_2\rix
\]
either, because in that case the pencil would have real eigenvalues, which is not the case.
\end{example}

We expect that in the real case one will have to allow summands of rank two in order to obtain
a decomposition into low-rank pencils. This will be subject to subsequent research.

\section{Conclusions and future work}\label{conclusion.sec}
We have described the generic change of the Weierstra\ss\ Canonical Form (given by the partial multiplicities) of regular
\cm{matrix pencils with symmetry structures} under \cm{structure-preserving} additive low-rank perturbations. In particular,
we have considered all the
structures indicated at the beginning of Section \ref{rank1.sec}.  We have seen that, for most eigenvalues and most of the structures,
the generic change coincides with the one in the unstructured case, namely: given an eigenvalue $\la_0\in\CC\cup\{\infty\}$ of
the pencil $L(\la)$, with $g$ associated partial multiplicities, for a generic perturbation, $E(\la)$, of rank $r$, the partial
multiplicities of $(L+E)(\la)$ at $\la_0$ are exactly the $g-r$ smallest partial multiplicities of $L(\la)$. In particular,
if $r\geq g$, the value $\la_0$ is not generically an eigenvalue of $(L+E)(\la)$. However, for the $\top$-alternating structures,
there is a (generic) different behavior for the eigenvalues $\la_0=0$ and $\la_0=\infty$, and similarly for the $\top$-palindromic
structures with the eigenvalues $\la_0=\pm1$. These differences arise in those cases where the parity of the partial multiplicities
in the perturbed pencil $L+E$ provided by the generic behavior in the unstructured case is not in accordance with the restrictions
imposed by the structure (for instance, the even-sized blocks associated with $\la_0=0$ in $\top$-even pencils must be paired-up).

Our results contain the ones in \cite{batzke14}, valid for rank-$1$ perturbations of \cm{pencils with symmetry structure, and extend the
ones in \cite{batzke16} that are valid for special rank-$2$ perturbations of pencils with symmetry structures. However, }
the main tools and developments used in this work are different to the ones in \cite{batzke14,batzke16}. More precisely, to obtain our
main results we have introduced a \cm{structure-preserving} rank-$1$ decomposition of low-rank pencils \cm{with symmetry structures},
for each of the structures  considered in the paper.

Several lines of research arise as a natural continuation of this work:

\begin{itemize}
\item To analyze the generic change of the partial multiplicities under low-rank perturbations of \cm{pencils with symmetry structures
that have real coefficients}, together with the generic change of the sign characteristic. In this work, we have restricted ourselves to the
partial multiplicities, but the sign characteristic is also a key ingredient in the eigenstructure, for instance, of Hermitian pencils.
The sign characteristic also appears in \cm{matrix pencils with real coefficients}, for some of the other structures
considered in this work (like the $\top$-even structure, see \cite{thompson}). So it is natural to address the generic
change of the sign characteristic in the context of \cm{pencils with symmetry structures having} real coefficients.
\item To describe the generic change of the partial multiplicities under low-rank perturbations of regular \cm{matrix
polynomials with symmetry structures of} arbitrary degree. The generic change of the partial multiplicities of regular \cm{matrix polynomials
without additional symmetry structures}
has been described in \cite{dd09}. However, the case of \cm{structure-preserving} perturbations of \cm{matrix polynomials
with symmetry structures} remains open.
\end{itemize}

\appendix

\section{Appendix}\label{appendix}
This appendix is devoted to prove \cm{the identities~\eqref{eq:11.1.19} and \eqref{det-identity}}.

\cm{We start with~\eqref{eq:11.1.19}. In this case $n_r$ is odd, say $n_r=2k+1$. The case $k=0$ is straightforward, so
we assume $k>0$.
Setting $\Delta_k:=\det(R\diag J_{2k+1}(-\la),(J_{2k+1}(\la))+\gamma (e_1+e_{2k+3})(e_1+e_{2k+3})^\top$ we have
\[
\Delta_k=\left|
\begin{array}{cccc|cccc}
\gamma&0&\hdots&0&0&\gamma&&\la\\
0&0&\hdots&0&&&\la&1\\
\vdots&\ddots&\ddots&\vdots&&\iddots&\iddots\\
0&0&\hdots&0&\la&1&\\\hline
0&\hdots&0&-\la&0&0&\hdots&0\\
\gamma&&-\la&1&0&\gamma&\hdots&0\\
&&\iddots\iddots&&\vdots&&&\vdots\\
-\la&1&&&0&0&\hdots&0
\end{array}
\right|.
\]
Using the Laplace expansion with respect to  the $(2k+1)$st row and column we arrive at
\[
\Delta_k=\la^2\cdot\left|
\begin{array}{cccc|cccc}
\gamma&0&\hdots&0&\gamma&&&\la\\
0&0&\hdots&0&&&\la&1\\
\vdots&\vdots&\ddots&\vdots&&\iddots&\iddots\\
0&0&\hdots&0&\la&1&\\\hline
\gamma&&&-\la&\gamma&0&\hdots&0\\
&&-\la&1&0&0&\hdots&0\\
&\iddots\iddots&&&\vdots&\vdots&\ddots&\vdots\\
-\la&1&&&0&0&\hdots&0
\end{array}
\right|.
\]
Using the Laplace expansion with respect to  the last column, we obtain
\begin{equation}\label{deltak2}\footnotesize
\Delta_k=\la^2\left((-\la)
\left|\begin{array}{cccc|cccc}
&&&&&&&\la\\
&&&&&&\la&1\\
&&&&&\iddots&\iddots\\
&&&&\la&1\\\hline
\gamma&&&-\la&\gamma&&\\
&&-\la&1&&&&\\
&\iddots&\iddots&&&&\\
-\la&1&&&&&&
\end{array}\right|+
\left|\begin{array}{cccc|cccc}
\gamma&&&&\gamma&&&0\\
&&&&&&\la&1\\
&&&&&\iddots&\iddots\\
&&&&\la&1\\\hline
\gamma&&&-\la&\gamma&&\\
&&-\la&1&&&&\\
&\iddots&\iddots&&&&\\
-\la&1&&&&&&
\end{array}\right|
\right).
\end{equation}
Computing  separately the first and second determinant again via Laplace expansion,
the first determinant is equal to
\[
\begin{array}{ccl}
(-1)^{k-1}\la^{2k-1}\left|\begin{array}{cccc}\gamma&&&-\la\\&&-\la&1\\&\iddots&\iddots\\
-\la&1&&\end{array}\right|&=&
(-1)^{k-1}\la^{2k-1}\left((-1)^{k-1}\gamma-(-1)^{k-1}(-\la)^{2k}\right)\\
&=&\la^{2k-1}\left(\gamma-\la^{2k}\right),
\end{array}
\]
%
and the second determinant is
\[
\begin{array}{ccl}
(-1)^{k-1}\left|\begin{array}{ccccc}\gamma&0&\hdots&0&\gamma\\
\gamma&0&\hdots&-\la&\gamma\\
&&\iddots&1&\\
&-\la&\iddots&&\\
-\la&1&&&
\end{array}\right|&=&(-1)^{k-1}\left|\begin{array}{ccccc}\gamma&0&\hdots&0&\gamma\\
&&&-\la&\\
&&\iddots&1&\\
&-\la&\iddots&&\\
-\la&1&&&
\end{array}\right|\\&=&(-1)^k\gamma
\left|\begin{array}{cccc}
&&&-\la\\
&&-\la&1\\
&\iddots&\iddots&\\
-\la&1&&
\end{array}\right|=-\gamma\la^{2k}.
\end{array}
\]
so that for
\eqref{deltak2} we get
\[
\Delta_k=-\la^2\left(-\la^{2k}(\gamma-\la^{2k})-\gamma\la^{2k}\right)=\la^{2k+2}\left(\la^{2k}-2\gamma\right),
\]
as claimed.}

\cm{The proof of~\eqref{det-identity} proceeds analogously, with only minor modifications. Now $n_r$ is even, say $n_r=2k$. Thus, setting}
$\cm{\widetilde\Delta_k}:=\det(R\cm{\diag (-J_{2k}(-\la),J_{2k}(\la)})+\gamma \lambda(e_1+e_{2k+2})(e_1+e_{2k+2})^\top$ we have
\[
\cm{\widetilde\Delta_k}=\left|
\begin{array}{cccc|cccc}
\gamma\la&0&\hdots&0&0&\gamma\la&&\la\\
0&0&\hdots&0&&&\la&1\\
\vdots&\ddots&\ddots&\vdots&&\iddots&\iddots\\
0&0&\hdots&0&\la&1&\\\hline
0&\hdots&0&\la&0&0&\hdots&0\\
\gamma\la&&\la&-1&0&\gamma\la&\hdots&0\\
&&\iddots\iddots&&\vdots&&&\vdots\\
\la&-1&&&0&0&\hdots&0
\end{array}
\right|.
\]
Using the Laplace expansion with respect to  the $(2k+1)$st row and column we arrive at
\[
\cm{\widetilde\Delta_k}=-\la^2\cdot\left|
\begin{array}{cccc|cccc}
\gamma\la&0&\hdots&0&\gamma\la&&&\la\\
0&0&\hdots&0&&&\la&1\\
\vdots&\vdots&\ddots&\vdots&&\iddots&\iddots\\
0&0&\hdots&0&\la&1&\\\hline
\gamma\la&&&\la&\gamma\la&0&\hdots&0\\
&&\la&-1&0&0&\hdots&0\\
&\iddots\iddots&&&\vdots&\vdots&\ddots&\vdots\\
\la&-1&&&0&0&\hdots&0
\end{array}
\right|.
\]
Using the Laplace expansion with respect to  the last column, the previous expression is equal to
\begin{equation}\label{deltak}\footnotesize
\cm{\widetilde\Delta_k}=-\la^2\left((-\la)
\left|\begin{array}{cccc|cccc}
&&&&&&&\la\\
&&&&&&\la&1\\
&&&&&\iddots&\iddots\\
&&&&\la&1\\\hline
\gamma\la&&&\la&\gamma\la&&\\
&&\la&-1&&&&\\
&\iddots&\iddots&&&&\\
\la&-1&&&&&&
\end{array}\right|+
\left|\begin{array}{cccc|cccc}
\gamma\la&&&&\gamma\la&&&0\\
&&&&&&\la&1\\
&&&&&\iddots&\iddots\\
&&&&\la&1\\\hline
\gamma\la&&&\la&\gamma\la&&\\
&&\la&-1&&&&\\
&\iddots&\iddots&&&&\\
\la&-1&&&&&&
\end{array}\right|
\right).
\end{equation}
Computing  separately the first and second determinant again via Laplace expansion,
the first determinant is equal to
\[
\begin{array}{ccl}
(-1)^{k-1}\la^{2k-2}\left|\begin{array}{cccc}\gamma\la&&&\la\\&&\la&-1\\&\iddots&\iddots\\
\la&-1&&\end{array}\right|&=&
(-1)^{k-1}\la^{2k-2}\left((-1)^{k-1}\gamma\la+(-1)^{k-1}\la^{2k-1}\right)\\
&=&\la^{2k-1}\left(\la^{2k-2}+\gamma\right),
\end{array}
\]
%
and the second determinant is
\[
\begin{array}{ccl}
(-1)^{k-1}\left|\begin{array}{ccccc}\gamma\la&0&\hdots&0&\gamma\la\\
\gamma\la&0&\hdots&\la&\gamma\la\\
&&\iddots&-1&\\
&\la&\iddots&&\\
\la&-1&&&
\end{array}\right|&=&(-1)^{k-1}\left|\begin{array}{ccccc}\gamma\la&0&\hdots&0&\gamma\la\\
&&&\la&\\
&&\iddots&-1&\\
&\la&\iddots&&\\
\la&-1&&&
\end{array}\right|\\&=&(-1)^k\gamma\la
\left|\begin{array}{cccc}
&&&\la\\
&&\la&-1\\
&\iddots&\iddots&\\
\la&-1&&
\end{array}\right|=-\gamma\la^{2k}.
\end{array}
\]
so that for
\eqref{deltak} we get
\[
\cm{\widetilde\Delta_k}=-\la^2\left(-\la^{2k}(\la^{2k-2}+
\gamma)-\gamma\la^{2k}\right)=\la^{2k+2}\left(\la^{2k-2}+2\gamma\right),
\]
as claimed.

\bigskip

\noindent{\bf Acknowledgments.} The work of Fernando De Ter\'{a}n has been supported by the Ministerio de Econom\'{i}a y Competitividad of Spain
through grants MTM2015-68805-REDT, and MTM2015-65798-P, and by the Ministerio de Educaci\'{o}n, Cultura y Deportes of Spain through grant PRX16/00128 ``Programa de estancias de movilidad de profesores e investigadores en centros de ense\~{n}anza superior e investigaci\'{o}n ``Salvador de Madariaga"".

The work of Volker Mehrmann has been supported by Einstein Foundation Berlin through project OT3 within the Einstein Center ECMath.
\bibliographystyle{plain}
\bibliography{biblio}

\begin{thebibliography}{10}

\bibitem{AlbAN04}
R.~Albert, I.~Albert, and G.L. Nakarado.
\newblock Structural vulnerability of the north american power grid.
\newblock {\em Phys. Rev. E}, 69:025103, Feb 2004.

\bibitem{Barnett90}
S.~Barnett.
\newblock {\em Matrices: Methods and Applications}.
\newblock Oxford, New York, 1990.

\bibitem{batzke14}
L.~Batzke.
\newblock Generic rank-one perturbations of structured regular matrix pencils.
\newblock {\em Linear Algebra Appl.}, 458:638--670, 2014.

\bibitem{batzke-thesis}
L.~Batzke.
\newblock {\em Generic Low-Rank Perturbations of Structured Regular Matrix
  Pencils and Structured Matrices}.
\newblock PhD thesis, TU Berlin, Berlin, Germany, 2015.

\bibitem{batzke16}
L.~Batzke.
\newblock Generic rank-two perturbations of structured regular matrix pencils.
\newblock {\em Oper. Matrices}, 10:83--112, 2016.

\bibitem{BatMRR16}
L.~Batzke, C.~Mehl, A.~C.~M. Ran, and L.~Rodman.
\newblock Generic rank-k perturbations of structured matrices.
\newblock {\em Oper. Theory Adv. Appl.}, 225:27--48, 2016.

\bibitem{BreCP96}
K.~E. Brenan, S.~L. Campbell, and L.~R. Petzold.
\newblock {\em Numerical Solution of Initial-Value Problems in Differential
  Algebraic Equations}.
\newblock {SIAM} Publications, Philadelphia, PA, 2nd edition, 1996.

\bibitem{d18}
F.~De~Ter\'{a}n.
\newblock A geometric description of the set of palindromic and alternating
  matrix pencils with bounded rank.
\newblock {\em SIAM J. Matrix Anal. Appl.}, 39:1116--1134, 2018.

\bibitem{dd07}
F.~De~Ter\'{a}n and F.~M. Dopico.
\newblock Low rank perturbation of {K}ronecker structures without full rank.
\newblock {\em SIAM J. Matrix Anal. Appl.}, 29:496--529, 2007.

\bibitem{dd09}
F.~De~Ter\'{a}n and F.~M. Dopico.
\newblock Low rank perturbation of regular matrix polynomials.
\newblock {\em Linear Algebra. Appl.}, 430:579--586, 2009.

\bibitem{dd16}
F.~De~Ter\'{a}n and F.~M. Dopico.
\newblock Generic change of the partial multiplicities of regular matrix
  pencils under low rank perturbations.
\newblock {\em SIAM J. Matrix Anal. Appl.}, 37:823--835, 2016.

\bibitem{ddm}
F.~De~Ter\'{a}n, F.~M. Dopico, and J.~Moro.
\newblock Low rank perturbation of {W}eierstrass structure.
\newblock {\em SIAM J. Matrix Anal. Appl.}, 30:538--547, 2008.

\bibitem{DuLM13}
N.H. Du, V.H. Linh, and V.~Mehrmann.
\newblock Robust stability of differential-algebraic equations.
\newblock In {\em Differential Algebraic Equation Forum, Surveys in
  Differential-Algebraic Equations I}, pages 63--96, Heidelberg, 2013. Springer
  Verlag.

\bibitem{eek2}
A.~Edelman, E.~Elmroth, and B.~K{\aa}gstr\"{o}m.
\newblock A geometric approach to perturbation theory of matrices and matrix
  pencils. {P}art {II}: {A} stratification-enhanced staircase algorithm.
\newblock {\em SIAM J. Matrix Anal. Appl.}, 20:667--699, 1999.

\bibitem{FouGJR13}
J.~H. Fourie, G.~J. Groenewald, D.~B. Janse~van Rensburg, and A.~C.~M. Ran.
\newblock Rank one perturbations of {H}-positive real matrices.
\newblock {\em Linear Algebra Appl.}, 439:653--674, 2013.

\bibitem{Gan59a}
F.R. Gantmacher.
\newblock {\em Theory of Matrices}, volume~1.
\newblock Chelsea, New York, 1959.

\bibitem{GohLR05}
I.~Gohberg, P.~Lancaster, and L.~Rodman.
\newblock {\em Indefinite Linear Algebra and Applications}.
\newblock Birkh\"auser, Basel, 2005.

\bibitem{GraMQSW16}
N.~Gr{\"a}bner, V.~Mehrmann, S.~Quraishi, C.~Schr\"oder, and U.~{von W}agner.
\newblock Numerical methods for parametric model reduction in the simulation of
  disc brake squeal.
\newblock {\em Z. Angew. Math. Mech.}, 96:1388--1405, 2016.

\bibitem{HamIRP10}
I.~Abou Hamad, B.~Israels, P.A. Rikvold, and S.V. Poroseva.
\newblock Spectral matrix methods for partitioning power grids: Applications to
  the italian and floridian high-voltage networks.
\newblock In {\em Computer Simulation Studies in Condensed-Matter Physics XXIII
  (CSP10)}, volume~4, pages 125--129. Physics Procedia, 2010.

\bibitem{HamM08}
P.~Hamann and V.~Mehrmann.
\newblock Numerical solution of hybrid differential-algebraic equations.
\newblock {\em Comp. Meth. Appl. Mech. Eng.}, 197:693--705, 2008.

\bibitem{HorM94}
L.~H\"ormander and A.~Melin.
\newblock A remark on perturbations of compact operators.
\newblock {\em Math. Scand.}, 75:255--262, 1994.

\bibitem{Jan12}
D.~B. Janse~van Rensburg.
\newblock {\em Structured matrices in indefinite inner product spaces: simple
  forms, invariant subspaces and rank-one perturbations}.
\newblock PhD thesis, North-West University, Potchefstroom, South Africa, 2012.

\bibitem{KunM06}
P.~{Kunkel} and V.~{Mehrmann}.
\newblock {\em Differential-Algebraic Equations. Analysis and Numerical
  Solution}.
\newblock Z\"urich: European Mathematical Society Publishing House, 2006.

\bibitem{Lib03}
D.~Liberzon.
\newblock {\em Switching in Systems and Control}.
\newblock Birkh\"auser, Boston, 2003.

\bibitem{LibT12}
D.~Liberzon and S.~Trenn.
\newblock Switched nonlinear differential algebraic equations: Solution theory,
  {L}yapunov functions, and stability.
\newblock {\em Automatica}, 48(5):954--963, 2012.

\bibitem{4m-good}
D.~S. Mackey, N.~Mackey, C.~Mehl, and V.~Mehrmann.
\newblock Structured polynomial eigenvalue problems: good vibrations from good
  linearizations.
\newblock {\em SIAM J. Matrix Anal. Appl.}, 28:1029--1051, 2006.

\bibitem{Meh00}
C.~Mehl.
\newblock Anti-triangular and anti-m-{H}essenberg forms for {H}ermitian
  matrices and pencils.
\newblock {\em Linear Algebra Appl.}, 317:143--176, 2000.

\bibitem{MehMRR11}
C.~Mehl, V.~Mehrmann, A.~C.~M. Ran, and L.~Rodman.
\newblock Eigenvalue perturbation theory of classes of structured matrices
  under generic structured rank one perturbations.
\newblock {\em Linear Algebra Appl.}, 435:687--716, 2011.

\bibitem{MehMRR12}
C.~Mehl, V.~Mehrmann, A.~C.~M. Ran, and L.~Rodman.
\newblock Perturbation theory of selfadjoint matrices and sign characteristics
  under generic structured rank one perturbations.
\newblock {\em Linear Algebra Appl.}, 436:4027--4042, 2012.

\bibitem{MehMRR13}
C.~Mehl, V.~Mehrmann, A.~C.~M. Ran, and L.~Rodman.
\newblock Jordan forms of real and complex matrices under rank one
  perturbations.
\newblock {\em Oper. Matrices}, 7:381--398, 2013.

\bibitem{MehMRR14}
C.~Mehl, V.~Mehrmann, A.~C.~M. Ran, and L.~Rodman.
\newblock Eigenvalue perturbation theory of symplectic, orthogonal, and unitary
  matrices under generic structured rank one perturbations.
\newblock {\em BIT}, 54:219--255, 2014.

\bibitem{MehMRR16}
C.~Mehl, V.~Mehrmann, A.~C.~M. Ran, and L.~Rodman.
\newblock Eigenvalue perturbation theory of structured real matrices under
  generic structured rank-one perturbations.
\newblock {\em Linear and Multilinear Algebra}, 64:527--556, 2016.

\bibitem{MehR17}
C.~Mehl and A.~C.~M. Ran.
\newblock Low rank perturbation of quaternion matrices.
\newblock {\em Electron. J. Linear Algebra}, 32:514--530, 2017.

\bibitem{MehNTX16}
V.~Mehrmann, V.~Noferini, F.~Tisseur, and H.~Xu.
\newblock On the sign characteristics of hermitian matrix polynomials.
\newblock {\em Linear Algebra Appl.}, 511:328--364, 2016.

\bibitem{MehW09}
V.~Mehrmann and L.~Wunderlich.
\newblock Hybrid systems of differential-algebraic equations -- analysis and
  numerical solution.
\newblock {\em J. Process Control}, 19:1218--1228, 2009.

\bibitem{MorD03}
J.~Moro and F.~Dopico.
\newblock Low rank perturbation of {J}ordan structure.
\newblock {\em {SIAM} J. Matrix Anal. Appl.}, 25:495--506, 2003.

\bibitem{Pet08}
M.C. Petri.
\newblock National power grid simulation capability: Needs and issues.
\newblock Technical report, Argonne National Laboratory, Energy Sciences and
  Engineering Directorate, U.S. Department of Homeland Security, 2008.

\bibitem{Sav03}
S.V. Savchenko.
\newblock Typical changes in spectral properties under perturbations by a
  rank-one operator.
\newblock {\em Mat. Zametki}, 74:590--602, 2003.
\newblock (Russian). Translation in Mathematical Notes. 74:557--568, 2003.

\bibitem{Sav04}
S.V. Savchenko.
\newblock On the change in the spectral properties of a matrix under a
  perturbation of a sufficiently low rank.
\newblock {\em Funktsional. Anal. i Prilozhen}, 38:85--88, 2004.
\newblock (Russian). Translation in Funct. Anal. Appl. 38:69--71, 2004.

\bibitem{thompson}
R.~C. Thompson.
\newblock Pencils of complex and real symmetric and skew matrices.
\newblock {\em Linear Algebra Appl.}, 147:323--371, 1991.

\end{thebibliography}

\end{document}